\documentclass{siamart171218}
\usepackage{mathrsfs}
\usepackage{amsmath,amssymb,amsfonts,bm,comment}
\usepackage{color}
\usepackage{enumerate}
\usepackage{hyperref}
\usepackage[utf8]{inputenc}
\usepackage{todonotes}
\usepackage{cite}
\usepackage{mathtools}
\usepackage{listings}
\usepackage{tikz}
\usetikzlibrary{arrows.meta}

\usepackage{pgfplots}
\usepackage{geometry}
\usepackage{centernot}
\usepackage{graphicx}
\newcommand{\qtoep}{\hbox{\rm qtoep}}
\newcommand{\cqttoolbox}{\texttt{cqt-toolbox}}
\newcommand{\wh}{\widehat}
\newcommand{\cu}{\mathbf i}

\newcommand{\QT}{\ensuremath{\mathcal{QT}}}
\newcommand{\QTD}{\ensuremath{\mathcal{QT}_\infty^d}}
\newcommand{\EQT}{\ensuremath{\mathcal{EQT}}}
\newsiamremark{remark}{Remark}
\newsiamremark{example}{Example}

\renewcommand{\tilde}{\widetilde}
\renewcommand{\hat}{\widehat}

\definecolor{mygray}{RGB}{230,230,230}
\definecolor{mylilas}{RGB}{170,55,241}
\newcommand\TheTitle{A computational framework for two-dimensional  random walks with restarts}
\newcommand\TheShortTitle{Computational framework for two-dimensional random walks}
\newcommand\TheAuthors{Dario A. Bini, Stefano Massei, Beatrice Meini, and Leonardo Robol}

\headers{\TheShortTitle}{\TheAuthors}

\title{\TheTitle}

\newcommand{\isti}{Dipartimento di Matematica, Università di Pisa, 
  Italy, partially supported by GNCS of INdAM}
\newcommand{\mathdep}{Dipartimento di Matematica, Università di Pisa, 
   Italy, partially supported by GNCS of INdAM}
\newcommand{\epfl}{EPF Lausanne, Switzerland. The work of Stefano Massei has been supported by the SNSF research project \emph{Fast algorithms from low-rank updates}, grant number: 200020\_178806}

\newcommand{\Endo}[1]{\mathcal{B}(#1)}

\newcommand{\K}{\mathcal K}

  \author{%
    Dario A. Bini\thanks{\mathdep},
    \email{dario.bini@unipi.it} \and
    Stefano Massei\thanks{\epfl},
    \email{stefano.massei@epfl.ch} \and
    Beatrice Meini\thanks{\mathdep},
    \email{beatrice.meini@unipi.it} \and
    Leonardo Robol\thanks{\isti\
      \email{leonardo.robol@unipi.it}}
}

\newcommand{\norm}[1]{\lVert #1 \rVert}

\ifpdf
\hypersetup{
  pdftitle={\TheTitle},
  pdfauthor={\TheAuthors}
}
\fi
\newcommand{\w}{{_{_{\mathcal{W}}}}}
\newcommand{\f}{{_{_\mathcal F}}}
\newcommand{\qt}{{_{_\mathcal{QT}}}}

\lstset{language=Matlab,%
	basicstyle=\ttfamily,
	breaklines=true,%
	backgroundcolor=\color{mygray},
	frame = leftline,
	morekeywords={cqt, symbol, correction, cqtrank, extend},
	keywordstyle=\color{blue},%
	morekeywords=[2]{1}, keywordstyle=[2]{\color{black}},
	identifierstyle=\color{black},%
	stringstyle=\color{mylilas},
	commentstyle=\color{mygreen},%
	showstringspaces=false,
	numbers=left,%
	numberstyle={\tiny \color{black}},
	numbersep=9pt, 
	emph=[1]{for,end,break},emphstyle=[1]\color{red}, 
}

\pgfplotsset{compat=1.13}
\begin{document}
\maketitle

\begin{abstract}
The treatment of two-dimensional random walks in the quarter plane leads to
Markov processes which involve semi-infinite matrices having Toeplitz
or block Toeplitz structure plus a low-rank correction. 	
	We propose an extension of the framework introduced in [Math. Comp., 87(314):2811–2830, 2018] which allows to deal
        with more general situations such as processes involving
        restart events. This is motivated by the need for modeling
        processes that can incur in unexpected failures like computer
        system reboots. 
We present a theoretical analysis of an enriched Banach algebra that,
combined with appropriate algorithms, enables the numerical
treatment of these problems.  The results are applied to the
solution of bidimensional Quasi-Birth-Death processes with
        infinitely many phases which model random walks in the quarter
        plane, relying on the matrix analytic approach. 
The reliability of our approach is confirmed by 
        extensive numerical experimentation on several case studies.
\end{abstract}

\section{Introduction}
\label{sec:introduction}
The treatment of the infinite data structures arising from Markov
processes usually relies on the assumption that jumps between states
become unlikely when their mutual distance increases. For instance,
this is natural when considering random walks on lattices where the
particle is forced to move to nearby positions at each step. However,
there are models that incorporate a global communication with a
certain subset of states. A rich source of case studies comes from
random walks with restart. This topic has been analyzed under
different perspectives
\cite{montero2016directed,montero2013monotonic,manrubia1999stochastic,
  evans2011diffusion}. Including resetting events is required in
various applications such as modeling computer system reboots
\cite{montero2013monotonic}, intermittent searches involved in
relocation phases of foraging animals
\cite{evans2011diffusion,benichou2005,lomholt2008levy} and computing
network indices \cite{janson2012hitting,avrachenkov2015hitting}.
Another example arises in computing return probabilities in certain
double Quasi-Birth-Death (QBD) processes: as shown in \cite[Section 5.2]{bmmr}, it can
happen that the probability of going back to a certain state, in
finite time, is positive independently of the starting position. An
analogous situation is encountered in \cite{zsh} in the case of an
M/T-SPH/1 queue system.

 In many queueing models, transition probabilities only depend on the
 mutual distances between the states. In this case it is possible to
 handle systems with infinite state space by means of a finite number
 of parameters. Moreover, this feature translates in considering
 semi-infinite matrices which have a Toeplitz structure, i.e.,
 matrices $T(a)=(t_{i,j})_{i,j\in\mathbb Z^+}$ such that
 $t_{i,j}=a_{j-i}$ for some given sequence $a=\{a_k\}_{k\in\mathbb
   Z}$, where  $\mathbb Z^+$ is the set of positive integers.
Indeed, Toeplitz matrices, finite or infinite, are
 almost ubiquitous in mathematical models where shift invariance
 properties are satisfied by some function.

Computing the invariant probability measure $\pi$ of random walks in
the quarter plane is a non trivial task due to the
infinite-dimensional nature of the model. In \cite{gos19} and \cite{gos16}, the
problem is faced by looking for representation of $\pi$ given in terms
of countably infinite sums of geometric terms. This strategy restricts
the applicability of this technique to a limited number of problems
which exclude certain transitions. On the other hand, the {\em Matrix
  Analytic Method} of M.~Neuts \cite{neuts:book} provides a more
general representation of $\pi$ given in terms of the minimal
nonnegative solution of a suitable quadratic matrix equation under no
restriction on the allowed transitions. In \cite{bmm}, \cite{bmmr}, a
framework has been introduced to handle such problems in the case where
the coefficients in the equation are matrices of infinite size,
making it possible to compute an
arbitrary number of components of $\pi$ in a finite number of
arithmetic operations.  However, this approach does not allow to deal
with models where some restart condition is involved.  In fact, in
\cite{bmm} the authors introduce the class $\mathcal{QT}$ of
semi-infinite matrices which can be approximated by the sum of a
semi-infinite Toeplitz matrix and a correction with finite support,
i.e., with a finite number of nonzero entries. But this class cannot
deal with models involving long-distance jumps, like the one occurring
in restarts, as well as in double QBDs in the cases where the
probability of going back to a certain state, in finite time, is
positive as in the example of \cite[Section 5.2]{bmmr}, or as in the
case of an M/T-SPH/1 queue system of \cite{zsh}.

In this paper, we propose a generalization of the class $\mathcal{QT}$
which includes corrections defining bounded linear operators in
$\ell^\infty$  with possibly unbounded support. The only restriction is that
the values of the entries stabilize when moving along each column. We
show that this is a suitable framework for studying models with
restarts and allows to weaken some assumptions made in \cite{bmm},
simplifying the underlying theory. Then, we present an application to
the analysis of QBD processes modeling random walks in
the quarter plane.

More specifically, we introduce the classes $\QTD$ and $\mathcal{EQT}$,
which are sets of 
semi-infinite matrices with bounded infinity norm. The former is made
by matrices representable as a sum of a Toeplitz matrix and a
correction, which represents a compact operator, with columns 
having entries which decay to zero. 
The latter is formed by matrices in $\QTD$ plus a further correction of the kind
$ev^T$ for $e^T=(1,1,\ldots)$ and $v=(v_i)_{i\in\mathbb Z^+}$ such
that $\sum_{i=1}^\infty |v_i|<\infty$. We prove that $\QTD$ and
$\mathcal{EQT}$ are Banach algebras, i.e., they are Banach spaces with
the infinity norm, closed under matrix multiplication. Moreover,
matrices in both classes can be approximated up to any precision by a
{\em finite number} of parameters. This allows to handle these classes
computationally and to apply numerical algorithms valid for finite
matrices to the case of infinite matrices.  We also show the way of
modifying the Matlab Toolbox \cqttoolbox{} of \cite{bmr} in order to
include and operate with these extended classes.  As a result, we may
effectively extend the Matrix Analytic Method of M.~Neuts
\cite{neuts:book} to the case of infinitely many states still keeping
the nice numerical features valid for the finite case. In this way we
can overcome the difficulty of the Neuts approach, pointed out by
Miyazawa in \cite[Sect. 4.3.1]{miyazawa2011light} where he writes ``it
is also well known that it (the matrix analytic method) can be used
for countably many background states, although it generally looses the
nice feature for numerical computations''.

The introduction of the new classes $\QTD$ and
$\mathcal{EQT}$ allows to handle cases which were not treatable with
the available classes, typically when restart is implicitly or
explicitly involved in the model as in the cases of \cite[Section
  5.2]{bmmr} and \cite{zsh}.
Relying on the above classes we derive some properties of the
minimal nonnegative solution $G$ of the matrix equation
$A_1X^2+A_0X+A_{-1}=X$,
associated with double QBDs \cite{lr:book} describing random walks in
the quarter plane, where the coefficients $A_i$ are nonnegative matrices in
$\QTD$ whose Toeplitz component is tridiagonal. This
class of problems covers a wide variety of two-queue models with
various service policies as non-preemptive priority, $K$-limited
service, server vacation and server setup \cite{ozawa}. Models of this
kind concern, for instance, bi-lingual call centers \cite{shl},
generalized two-node Jackson networks \cite{ozawa_kob18}, two-demand
models \cite{flatto}, two-stage inventory queues \cite{HZL}, and more.
Computing the minimal nonnegative solution $G$ of this matrix equation
is a fundamental step to solve the QBD by means of the matrix analytic
approach of \cite{neuts:book}. We refer the reader to the books
\cite{bini2005numerical}, \cite{lr:book}  for more details in this regard.  In
particular, we provide general conditions on the transition probabilities of the
random walk in order that $G\in\QTD$ or
$G\in\mathcal{EQT}$.

Finally, we perform an extensive numerical experimentation to show the effectiveness of our framework. We apply our approach for computing the steady state distribution of a 1-dimensional random walk with reset, for solving a quadratic matrix equation arising in a two-node Jackson network with possible breakdown and in a 2-dimensional random walk with reset.

The paper is organized as follows. In Section~\ref{sec:qt} we
introduce and analyze the classes $\QTD$ and
$\mathcal{EQT}$. 
In Section
\ref{sec:qbd} we study double QBDs which model random walks in the
quarter plane where the matrices $A_i$, for $i=-1,0,1$, are tridiagonal
quasi-Toeplitz.
Relying on the classes $\QTD$ and $\mathcal{EQT}$, we prove that the
matrix $G$ can be
written as $G=T(g)+E_g$ where $E_g$ has bounded infinity norm and $T(g)$ is the Toeplitz matrix associated with the function $g(z)$
which  solves a suitable
scalar quadratic equation.  We give sufficient conditions under which
the solution $G$ belongs to
$\QTD$ or to $\mathcal{EQT}$. Therefore,  one can plug known available algorithms --- valid for
finite matrices --- into our proposed computational framework, to approximate $G$.  Finally, in Section~\ref{sec:numerical} we test
the computational framework on some representative problems, and in Section~\ref{sec:conc} we draw the conclusions.

\section{$\mathcal {QT}$ matrices}
\label{sec:qt}
We denote by $\ell^p$, with $1 \leq p \leq \infty$, the usual Banach
space of $p$-summable sequences $x=(x_j)_{j\in\mathbb Z^+}$, with the
norms $\|x\|_p:=(\sum_{j = 1}^\infty |x_j|^p)^{\frac 1p}$ for $1\le p<\infty$, $\|x\|_\infty:= \sup_{j} |x_j|$,
and by $\Endo{\ell^p}$ the set of bounded linear operators from
$\ell^p$ into itself with the operator norm $\|A\|_p=\sup_{\|x\|_p=1}
\|Ax\|_p$. A sequence $x$ will be also referred to as a semi-infinite
vector, or simply a vector.  Moreover, we denote by
$\K(\ell^p)\subset\Endo{\ell^p}$ the subset formed by compact
operators, and by $e=(1,1,\ldots)^T\in\ell^\infty$ the vector of all
ones. Throughout this work, we will only consider operators
that can be represented as matrices with respect to the standard basis
$\{ e_i \}_{i \in \mathbb N}$. This restricts the focus on operators
that act on (and whose image is contained in) the closure of such set,
which is smaller than the entire space when $p = \infty$, since
$\ell^\infty$ is not separable.
  
The Wiener class $\mathcal W$ is the set of Laurent series
$a(z)=\sum_{i\in\mathbb Z}a_iz^i$ such that
$\|a\|\w:=\sum_{i\in\mathbb
  Z}|a_i|$ is finite. This set, which contains complex valued
functions defined on the unit circle, is a Banach algebra
\cite{bottcher2005spectral} with the norm $\norm{\cdot}\w$.
The map that associates a function
$a(z)\in\mathcal W$, called symbol, with the semi-infinite Toeplitz
matrix $T(a)=(t_{i,j})_{i,j\in\mathbb Z^+}$, $t_{i,j}=a_{j-i}$, is a
bijection between $\mathcal W$ and the set of bounded Toeplitz
operators on $\Endo{\ell^p}$ for $ p=1,\infty$.

In \cite{bmm}, a new class of semi-infinite matrices is introduced,
denoted by \QT, and is defined as the set of matrices that can be
written as the sum of a (semi-infinite) Toeplitz matrix $T(a)$ such
that $a'(z)=\sum_{i\in\mathbb Z}ia_iz^i \in \mathcal{W}$ and a correction $E = 
(E_{i,j})_{i,j \in \mathbb Z^+}$
such that
$\sum_{i,j\in\mathbb Z^+}|E_{i,j}|$ is finite.  The class \QT\ is
endowed with an appropriate norm, which makes it a Banach
algebra. This norm is denoted by $\norm{\cdot}\qt$ and is defined
as follows:
$
\norm{T(a) + E}\qt = \norm{a}\w + \norm{a'}\w + \norm{E}\f$,
 $ \norm{E}\f := \sum_{i,j\in\mathbb Z^+} |E_{ij}|$.
Observe that this norm is well-defined since both $a(z)$ and $a'(z)$ belong to $\mathcal W$.

This framework has shown to be very effective in the development of
numerical algorithms that treat the infinite dimensional case
``directly'', without the need of truncating matrices to finite
size. It provides a practical tool for solving computational problems
like computing matrix functions and solving matrix equations where the
input is given by $\QT$ matrices. We refer the reader to
\cite{bmmr,bmr,bmmj,bm:nm, bini2017functions,
  robol2019rational}
for some examples where this arithmetic has been used numerically to
solve various kinds of tasks.
However, several aspects of the theory are not yet completely
satisfactory. For instance, the requirement that the symbol $a'(z)$
lives in $\mathcal W$ is stronger than simply requiring $a(z) \in
\mathcal W$, and seems artificial.  Moreover, there are cases in the
setting of Markov chains that fit very naturally in the set of
low-rank perturbations of semi-infinite Toeplitz matrices, but cannot
be described under this framework because the correction $E$ does not
have finite norm when considering $\norm{\cdot}\f$.  A
couple of examples are given in Section~\ref{sec:numerical}.

 The aim of this section is introducing a superset of $\QT$
that allows to treat such cases maintaining the features needed to
establish a computational framework. Let us first introduce  some notation.
Given $a(z)\in\mathcal W$ define
$a^+(z)=\sum_{i\in\mathbb Z^+}a_iz^i$, $a^-(z)=\sum_{i\in\mathbb
	Z^+}a_{-i}z^i$ so that $a(z)=a_0+a^-(z^{-1})+a^+(z)$, and associate
with $a^\pm(z)$ the following semi-infinite Hankel matrices
$H(a^+)=(a_{i+j-1})_{i,j \in \mathbb Z^+}$, $H(a^-)=(a_{-i-j+1})_{i,j \in \mathbb Z^+}$.
 The following result from 
\cite[Proposition~1.3]{bottcher2005spectral} links semi-infinite Toeplitz and Hankel matrices.

\begin{theorem}[Gohberg-Feldman] \label{thm1}
  If $a(z)\in\mathcal W$, 
 then $\norm{T(a)}_p\le\norm{a}\w$,
        $\norm{H(a^-)}_p\le\norm{a}\w$,
        $\norm{H(a^+)}_p\le\norm{a}\w$. If
	$c(z)=a(z)b(z)$ where $a(z),b(z)\in\mathcal W$, then
	$
	T(a)T(b)=T(c)-H(a^-)H(b^+)$.
\end{theorem}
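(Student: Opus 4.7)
The plan is to treat the two parts separately. Neither requires more than bookkeeping, since the Wiener hypothesis $\|a\|_{\mathcal W} = \sum_k |a_k| < \infty$ guarantees absolute convergence of every series that will appear.

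For the three norm inequalities, I would use the elementary row and column sum bounds. The $i$-th row of $T(a)$ is the shifted segment $(a_{j-i})_{j \ge 1}$ of the full sequence, so its $\ell^1$ norm is at most $\sum_k |a_k| = \|a\|_{\mathcal W}$; the same holds column by column. The Hankel matrices $H(a^\pm)$ are even simpler: along each row or column each coefficient appears at most once, drawn from $(a_k)_{k\ge 1}$ or $(a_{-k})_{k\ge 1}$ respectively. This gives the bound for $p=1$ (maximum column sum) and $p=\infty$ (maximum row sum). For intermediate $p$, Riesz--Thorin interpolation, or equivalently the Schur test with constant weights, closes the gap.

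For the identity $T(a)T(b) = T(c) - H(a^-) H(b^+)$, the approach is a direct entry-wise calculation. One expands
\[
(T(a)T(b))_{ij} = \sum_{k \ge 1} a_{k-i} b_{j-k},
\]
substitutes $\ell = k - i$, and recognizes the result as a tail of the full convolution $c_{j-i} = \sum_{\ell \in \mathbb Z} a_\ell b_{(j-i)-\ell}$, missing exactly the terms with $\ell \le -i$. The further substitution $\ell = -i - s + 1$, $s \ge 1$, rewrites the missing tail as $\sum_{s \ge 1} a_{-i-s+1}\, b_{j+s-1}$, which coincides with the $(i,j)$ entry of $H(a^-) H(b^+)$ by the Hankel indexing $H(a^-)_{is} = a_{-i-s+1}$ and $H(b^+)_{sj} = b_{s+j-1}$.

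The main obstacle is not the algebraic manipulation but the justification of the infinite rearrangements and of the associativity that turns $T(a) T(b)$ and $H(a^-) H(b^+)$ into well defined compositions of bounded operators on $\ell^p$. This is where the first part feeds into the second: the norm bounds make each of the four matrices a bounded operator, while absolute convergence of $\sum_{m,n} |a_m|\,|b_n| = \|a\|_{\mathcal W} \|b\|_{\mathcal W} < \infty$ legitimizes every use of Fubini. Once these points are settled, the identity reduces to the indexing exercise described above.
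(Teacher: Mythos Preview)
Your argument is correct. The paper does not supply its own proof of this theorem; it simply imports the result from \cite[Proposition~1.3]{bottcher2005spectral} (B\"ottcher--Grudsky). What you have written is essentially the standard derivation found there: the Schur test (row and column sums bounded by $\|a\|_{\mathcal W}$) gives the norm inequalities for all $1\le p\le\infty$ at once, and the product identity is the index computation you carried out, with Fubini justified by the absolute summability $\sum_{m,n}|a_m|\,|b_n|=\|a\|_{\mathcal W}\|b\|_{\mathcal W}<\infty$. There is nothing further to compare.
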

The Hankel matrices 
$H(a^-)$ and $H(b^+)$ are compact operators in $\Endo{\ell^p}$
for every $1 \leq p \leq \infty$ \cite[Proposition~1.2]{bottcher2005spectral}. 

\subsection{The class of $\QT_p$ matrices} \label{sec:qtp}
A more general approach for defining the set of quasi-Toeplitz
matrices is avoiding the norm $\|\cdot\|\qt$ and keeping the induced
operator norm $\|\cdot\|_p$.

\begin{definition}\label{def:qt}
Given an integer $p$, $1 \leq p \leq \infty$, we say that the
semi-infinite matrix $A$ is \emph{$p$-Quasi-Toeplitz} if it can be
written in the form
$
A=T(a)+E,
$
where $a(z)\in\mathcal W$,
and $E$ defines a compact operator in $\Endo{\ell^p}$. We refer to
$T(a)$ as the Toeplitz part of $A$, and to $E$ as the correction.  We
denote the set of $p$-Quasi-Toeplitz matrices as $\QT_p$.
\end{definition}
The set
$\QT_p$ is closed under product. In
fact, denoting $A=T(a)+E_a$, $B=T(b)+E_b$ in $\QT_p$ one has
$
C=AB=T(a)T(b)+T(a)E_b+E_aT(b)+E_aE_b$.
Moreover, denoting $c(z)=a(z)b(z)$, since in view of Theorem \ref{thm1} we have
$T(a)T(b)=T(c)-H(a^-)H(b^+)$, then it follows that
\begin{align*}
&C=T(c)+E_c,\\
&E_c=-H(a^-)H(b^+)+T(a)E_b+E_aT(b)+E_aE_b.
\end{align*}
The matrix $E_c$ is compact in $\Endo{\ell^p}$ since each addend  is the product of two operators, at least one of the two being compact in $\Endo{\ell^p}$. This proves that $\QT_p$
is closed under matrix multiplication, and being a 
subspace of $\Endo{\ell^p}$, we have the following. 

\begin{theorem}
	The class $\QT_p$ for any integer $p$,
	$1 \leq p \leq \infty$ is an algebra in $\Endo{\ell^p}$. 
      \end{theorem}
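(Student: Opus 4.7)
The plan is to verify the two defining properties of an algebra: that $\QT_p$ is a linear subspace of $\Endo{\ell^p}$, and that it is closed under composition of operators. The argument is essentially already sketched in the paragraph preceding the statement, so I would organize it cleanly and check that each step is justified.

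First I would observe that $\QT_p \subseteq \Endo{\ell^p}$: for $A = T(a) + E$ with $a \in \mathcal W$, Theorem \ref{thm1} gives $\norm{T(a)}_p \le \norm{a}\w < \infty$, while $E$ is bounded because it is compact by Definition \ref{def:qt}. For the linear subspace property, since $\mathcal W$ is a linear space and $\K(\ell^p)$ is a linear subspace of $\Endo{\ell^p}$, writing $A + \lambda B = T(a + \lambda b) + (E_a + \lambda E_b)$ shows that any linear combination of $\QT_p$ matrices is again of the form Toeplitz-plus-compact with Wiener symbol, hence in $\QT_p$.

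Next I would establish closure under products by reusing the decomposition already displayed in the excerpt, namely $AB = T(c) + E_c$ with $c(z) = a(z)b(z)$ and
\[
E_c = -H(a^-)H(b^+) + T(a)E_b + E_a T(b) + E_a E_b.
\]
Here $c \in \mathcal W$ because $\mathcal W$ is a Banach algebra, so the Toeplitz part of $AB$ has a Wiener symbol. The remaining task is to verify that $E_c$ defines a compact operator in $\Endo{\ell^p}$. For this I would invoke the fact, cited from \cite[Proposition~1.2]{bottcher2005spectral}, that $H(a^-)$ and $H(b^+)$ are compact on $\ell^p$, combined with the standard fact that $\K(\ell^p)$ is a two-sided ideal in $\Endo{\ell^p}$: each of the four terms in $E_c$ is a composition involving at least one compact factor, hence is compact, and a finite sum of compact operators is compact.

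The only substantive point — what one might call the ``hard step'' — is the compactness of the Hankel tails $H(a^-)$ and $H(b^+)$, but this is imported as an external result, and everything else reduces to elementary properties of Banach algebras, of the Wiener algebra, and of the ideal of compact operators. No quantitative norm estimates are needed, since we are only asserting an algebra structure in $\Endo{\ell^p}$ rather than a Banach algebra structure with a specific norm on $\QT_p$.
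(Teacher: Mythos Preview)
Your proposal is correct and follows essentially the same route as the paper: the argument there is precisely the paragraph preceding the theorem, which uses the decomposition $AB = T(ab) + E_c$ with $E_c = -H(a^-)H(b^+) + T(a)E_b + E_a T(b) + E_a E_b$ and observes that each summand is compact because it contains at least one compact factor. Your write-up is in fact slightly more explicit than the paper's (you spell out the linear subspace check and the fact that $c \in \mathcal W$), but no new idea is involved.
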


      \begin{remark}
  The set $\QT_p$ is not necessarily
  topologically closed for
  $1 < p < \infty$; for instance, for $p = 2$ it is known that
  $\norm{T(a)}_2 = \norm{a}_\infty$ \cite{bottcher2005spectral}, where $\norm{a}_\infty$ is intended as
  the sup-norm of continuous function defined for $|z|=1$.
  By the Du Bois-Reymond theorem \cite{du1873ueber}
  there exists a continuous function $a$ whose Fourier series is not
  summable. The latter could be approximated uniformly with
  polynomials in view of Weierstrass' theorem, and this produces a sequence
  of operators $T(a_n) \to T(a)$ in the $2$-norm --- but whose limit
  has symbol outside the Wiener class. In Section~\ref{sec:eqt} we show that
  for the case $p = \infty$, which is the one of interest for our applications,
  the set $\QT_\infty$ is  a (closed) Banach algebra. 
\end{remark}

The following result ensures that the set $\QT_p$  extends $\QT$.

\begin{lemma}
	For any integer $1\le p\le\infty$, it holds $\QT\subset\QT_p$. 
	\end{lemma}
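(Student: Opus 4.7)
The plan is to take any $A \in \QT$, write it as $A = T(a) + E$ with $a \in \mathcal{W}$ (and $a' \in \mathcal{W}$) and $\|E\|\f < \infty$, then verify the two conditions of Definition~\ref{def:qt}: that $a \in \mathcal{W}$ (automatic from the definition of $\QT$), and that $E$ defines a compact operator on $\ell^p$. The first condition is free, so the real task is showing that finite entrywise $\ell^1$ norm implies compactness on every $\ell^p$.

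To establish compactness of $E$ in $\Endo{\ell^p}$, I would approximate $E$ by finite-rank truncations. Let $E_n$ be the matrix obtained from $E$ by zeroing out all entries outside the leading $n \times n$ block; each $E_n$ has finite rank, hence is compact. The key estimate is
\[
  \|E - E_n\|_p \;\le\; \|E - E_n\|\f \;=\; \sum_{\max(i,j) > n} |E_{ij}|,
\]
which tends to zero as $n \to \infty$ since $\|E\|\f < \infty$. As compact operators form a closed subset of $\Endo{\ell^p}$ in the operator norm, $E$ itself is compact, and hence $A = T(a)+E \in \QT_p$.

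The only nontrivial point is the inequality $\|M\|_p \le \|M\|\f$ for arbitrary semi-infinite matrices $M$. For $p=1$ and $p=\infty$ this is a direct computation: the $\infty$-norm is bounded by the maximum row absolute sum, the $1$-norm by the maximum column absolute sum, both of which are dominated by the total entrywise $\ell^1$ mass. For intermediate $p$ I would invoke the Riesz--Thorin interpolation theorem on the maps $\ell^1 \to \ell^1$ and $\ell^\infty \to \ell^\infty$ to obtain $\|M\|_p \le \|M\|_1^{1/p}\|M\|_\infty^{1-1/p} \le \|M\|\f$.

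The main (very mild) obstacle is simply that $\QT_p$ is defined via the operator-norm topology rather than the entrywise $\ell^1$ norm, so one must bridge the two via the interpolation bound and then exploit the closedness of the compact ideal. No use of the stronger hypothesis $a' \in \mathcal{W}$ in the definition of $\QT$ is required for this inclusion, which is consistent with the paper's earlier comment that this condition appears artificial relative to the purely operator-theoretic definition of $\QT_p$.
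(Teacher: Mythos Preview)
Your proof is correct and follows the same overall strategy as the paper: both arguments reduce the inclusion to the inequality $\|E\|_p \le \|E\|\f$. The paper, however, proves this inequality by a direct elementary computation (normalize to $\|E\|\f = 1$, use $|x_j|\le 1$ and $|y_i|^p \le |y_i|$), whereas you obtain it via Riesz--Thorin interpolation between the $p=1$ and $p=\infty$ cases. The elementary route avoids invoking interpolation machinery; your route is shorter once Riesz--Thorin is granted.

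One point where your write-up is actually more explicit than the paper's: you spell out the compactness step by approximating $E$ with finite-rank truncations $E_n$ and applying the norm inequality to $E-E_n$. The paper's proof, as written, only establishes that $E$ is bounded on $\ell^p$ and declares the norm inequality ``sufficient'' without making the passage to compactness explicit; your truncation argument is exactly what is needed to close that gap.
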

	\begin{proof}
Let $A=T(a)+E\in\QT$. It is sufficient to prove that
$\norm{E}\f\ge\norm{E}_p$ for any $p\in[1,\infty]$.  Without loss of
generality we can consider the case $\norm{E}\f= 1$ so that
$|E_{ij}|\leq 1\ \forall i,j$.  In fact, if $\norm{E}\f=\theta\ne 1$,
the condition $\norm{E}\f\le\norm{E}_p$ is equivalent to
$\norm{\theta^{-1}E}\f\le\norm{\theta^{-1}E}_p$, that is,
$\norm{\widetilde E}\f\le\norm{\widetilde E}_p$ where $\widetilde
E=\theta^{-1} E$ is such that $\norm{\widetilde E}\f=1$.
Let $x$ be such that $\norm{x}_p=1$, $y=Ex$ so that
$\norm{y}_p\le\norm{E}_p$. Observe that $|x_i|\leq 1$ for any $i$ so
that
		$
		|y_i|\leq\sum_{j\geq 1}|E_{ij}x_j|\leq \sum_{j\geq 1}|E_{ij}|\leq 1
		$.
Since $p\geq 1$, then
		\[
		|y_i|^p\leq|y_i|\leq \sum_{j\geq 1}|E_{ij}|\quad
                \Rightarrow\quad \norm{y}_p\leq \norm{E}\f^{1/p}=\norm{E}\f,
		\]
where the last equality holds since $\norm{E}\f=1$. This
way, $\norm{E}_p=\sup_{\|x\|_p=1}\norm{Ex}_p\leq \norm{E}\f$.
\end{proof}
        It can be shown that the inclusion is strict.
%
%

Matrices in the $\QT_p$ class, for $p\ne 1,\infty$, can be
approximated to any arbitrary precision by using a finite number of
parameters, in the following sense.

\begin{lemma} \label{lem:decay-p}
Let $A=T(a)+E\in\QT_p$ for some integer $p\in (1,\infty)$, 
 then,
for any $ \epsilon>0$ there exist $\widetilde E\in\mathcal K(\ell^p)$ with
finite support and a Laurent polynomial $\widetilde a(z)$ such that
$\norm{A-\widetilde A}_p \leq \epsilon$ where $\widetilde
A=T(\widetilde a)+\widetilde E$.
\end{lemma}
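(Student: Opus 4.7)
My plan is to split the error into two contributions, one for the Toeplitz part and one for the compact correction, and bound each separately by $\epsilon/2$.

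\textbf{Approximating the symbol.} Since $a(z) = \sum_{i \in \mathbb Z} a_i z^i \in \mathcal W$, the truncated Laurent polynomial $\widetilde a(z) = \sum_{|i| \leq N} a_i z^i$ satisfies $\|a - \widetilde a\|\w = \sum_{|i|>N} |a_i| \to 0$ as $N \to \infty$. By the Gohberg--Feldman inequality in Theorem~\ref{thm1}, $\|T(a) - T(\widetilde a)\|_p = \|T(a - \widetilde a)\|_p \leq \|a - \widetilde a\|\w$, so choosing $N$ large enough we get $\|T(a) - T(\widetilde a)\|_p \leq \epsilon/2$.

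\textbf{Approximating the compact correction.} Let $P_n \in \Endo{\ell^p}$ be the projection onto the first $n$ coordinates. I would set $\widetilde E = P_n E P_n$, which clearly has finite support, and show that $\|E - P_n E P_n\|_p \to 0$ as $n \to \infty$. Splitting
\[
E - P_n E P_n = (I - P_n) E + P_n E (I - P_n),
\]
it suffices to prove $\|(I - P_n) E\|_p \to 0$ and $\|E (I - P_n)\|_p \to 0$, since $\|P_n\|_p = 1$. For the first term, since $p < \infty$, the operators $I - P_n$ converge to $0$ strongly on $\ell^p$; combined with the compactness of $E$ (which makes $E$ map the unit ball to a relatively compact set on which strong convergence is uniform), this yields convergence in operator norm. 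For the second term, I would pass to the dual: $\|E(I - P_n)\|_p = \|(I - P_n)^* E^*\|_q$ where $q = p/(p-1)$. The adjoint $E^*$ is compact, and $(I - P_n)^*$ acts as the complementary projection on $\ell^q$, which converges strongly to $0$ because $q < \infty$ (this is exactly where we use $p > 1$). The same compact-operator argument then gives norm convergence. Taking $n$ large enough we obtain $\|E - \widetilde E\|_p \leq \epsilon/2$.

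Combining the two bounds with the triangle inequality gives $\|A - \widetilde A\|_p \leq \epsilon$. The main obstacle is the approximation of the compact correction in operator norm by matrices with finite support: strong convergence of $P_n$ alone is not enough for operator-norm bounds, so one really needs to exploit compactness of both $E$ and $E^*$ together with strong convergence of the coordinate projections on $\ell^p$ \emph{and} on its dual $\ell^q$. This symmetric requirement explains why the statement is restricted to $p \in (1, \infty)$ and fails at the endpoints.
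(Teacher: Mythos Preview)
Your proof is correct. The treatment of the Toeplitz part is identical to the paper's, but your handling of the compact correction differs: the paper first invokes the approximation property of $\ell^p$ (density of finite-rank operators in $\K(\ell^p)$, via the existence of a Schauder basis) to write $E \approx \sum_{j=1}^k u_j v_j^T$ with $u_j \in \ell^p$, $v_j \in \ell^q$, and then truncates each $u_j$ and $v_j$ to finite support. You instead go directly to $\widetilde E = P_n E P_n$ and argue via strong convergence of the coordinate projections on $\ell^p$ and $\ell^q$ combined with compactness of $E$ and $E^*$. Your route is more streamlined---it avoids the intermediate finite-rank representation and produces the finite-support approximant in one step---while the paper's route makes the role of the approximation property explicit. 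Both arguments identify the same obstruction at the endpoints: you need $q < \infty$ for strong convergence of the projections on the dual, the paper needs $q < \infty$ to truncate $v_j \in \ell^q$ in norm; in either case this forces $p > 1$.
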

		
\begin{proof}
	Since $a(z) \in \mathcal W$, there exists a Laurent polynomial
        $\tilde a(z)$ such that $\norm{a - \tilde a}\w \leq
        \frac{\epsilon}{2}$, and therefore, 
        $\norm{T(a) - T(\tilde
          a)}_p \leq \|a-\tilde a\|\w
         \leq\frac{\epsilon}{2}$.  Since $E$ is compact and since
        $\ell^p$ for $1 \leq p < \infty$ admits a Schauder basis,
        finite rank operators are dense in $\K(\ell^p)$, see
        \cite[Theorem 4.1.33]{Megginson}. Therefore, 
        we can find $\widehat E$ of finite rank
        $k$ such that $\norm{E - \widehat E}_p \leq
        \frac{\epsilon}{4}$. Thus, we can write $\widehat E = \sum_{j
          = 1}^k u_j v_j^T$, with $u_j \in \ell^p$ and $v_j \in
        \ell^q$, with $\frac{1}{p} + \frac{1}{q} = 1$, and $p,q >
        1$. This implies that each $u_j, v_j$ can be approximated
        arbitrarily well with vectors of finite support $\tilde u_j,
        \tilde v_j$ such that $\norm{u_j v_j^T - \tilde u_j \tilde
          v_j^T} \leq \frac{\epsilon}{4k}$.
        Setting $\tilde E := \sum_{j = 1}^k \widetilde u_j \widetilde v_j^T$,
        which has finite support, concludes the proof.
\end{proof}
		
\subsection{The class $\QTD$} \label{sec:newclass}
Observe that Lemma~\ref{lem:decay-p} does not hold for $p=1$ and for
$p=\infty$. In fact, for any random vector with components in modulus
less than 1 we have $ve_1^T\in\QT_\infty$ and
$e_1v^T\in\mathcal{QT}_1$. On the other hand, $v$ cannot be
approximated to any precision with a finite number of parameters.
This limitation is a serious drawback from the computational point of
view especially for $p=\infty$ since the $\ell^\infty$ environment is
the natural setting for Markov chains.

For this reason, we introduce a slightly different definition for
the case $p = \infty$; the case $p = 1$ can be treated by
considering the transpose matrix\footnote{Note that, even if $\ell^1$ is much
  smaller of the dual of $\ell^\infty$, the additional constrain
  of considering operators representable as matrices over the
  canonical basis, implies $\QT_1 = (\QT_\infty)^\star$.}  of elements
in $\QT_\infty$.

\begin{definition}\label{def:decay}
  A matrix $E \in \Endo{\ell^\infty}$ has the \emph{decay property} if
  the vector $w:=|E| e$, $w=(w_i)_{i\in\mathbb Z^+}$, 
   is such that
  $\lim_{i\to\infty}w_i=0$, where $|E|:=(|E_{i,j}|)_{i,j \in \mathbb Z^+}$.
\end{definition} 

\begin{definition}\label{def:qt-inf}
  We define $\QTD$ the class of all the matrices which can be written
  in the form $ A=T(a)+E, $ where $a(z)\in\mathcal W$ and
  $E\in\Endo{\ell^\infty}$ has the decay property. The superscript ``$d$'' denotes ``decay''.
\end{definition}

The decay property allows to state an approximability result in the
same spirit of Lemma~\ref{lem:decay-p} for matrices in
$\Endo{\ell^\infty}$.

\begin{lemma}\label{th:usfl}
Let $E\in \Endo{\ell^\infty}$, and let $E^{(k)}$ be the matrix that 
coincides with $E$ in the
leading principal $k\times k$ submatrix and is zero elsewhere.
Then, the following are equivalent:
\begin{enumerate}[(i)]
	\item $E$ has the decay property;
	\item $\lim_{k \to \infty} \norm{E - E^{(k)}}_\infty = 0$.
\end{enumerate}
In particular, if $E$ has the decay property,
 then it represents a
compact operator in $\Endo{\ell^\infty}$.
\end{lemma}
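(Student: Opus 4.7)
The plan is to prove the implications (i) $\Rightarrow$ (ii) and (ii) $\Rightarrow$ (i), and then deduce compactness from (ii). The key fact I would use throughout is that the operator norm on $\Endo{\ell^\infty}$ of a matrix equals the supremum of the $\ell^1$-norms of its rows, i.e., $\|E\|_\infty=\sup_i \sum_j |E_{ij}| = \sup_i w_i$, where $w=|E|e$. In particular, the hypothesis $E\in\Endo{\ell^\infty}$ gives $\sup_i w_i < \infty$.

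For (i) $\Rightarrow$ (ii), I would compute $E-E^{(k)}$ row-by-row. For rows $i\le k$, the row of $E-E^{(k)}$ is the tail $(E_{ij})_{j>k}$, whose $\ell^1$-norm is $\sum_{j>k}|E_{ij}|$. For rows $i>k$, the entire row of $E$ survives, contributing $w_i$. Thus
\[
\norm{E-E^{(k)}}_\infty = \max\!\left(\sup_{i\le k}\sum_{j>k}|E_{ij}|,\ \sup_{i>k} w_i\right).
\]
Given $\epsilon>0$, decay gives $N$ with $w_i<\epsilon$ for $i>N$, handling the second supremum as soon as $k\ge N$. For the first, I split the range $i\le k$ into $i\le N$ and $N<i\le k$. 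The second piece is bounded by $\sup_{N<i\le k} w_i\le \epsilon$. The first piece involves only finitely many indices $i=1,\dots,N$, and for each such $i$ the series $\sum_j|E_{ij}|=w_i<\infty$ has vanishing tail, so choosing $k$ large enough makes $\max_{i\le N}\sum_{j>k}|E_{ij}|<\epsilon$. Combining, $\norm{E-E^{(k)}}_\infty\le\epsilon$ for all sufficiently large $k$.

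For (ii) $\Rightarrow$ (i), I observe that when $i>k$, the $i$-th row of $E-E^{(k)}$ coincides with the $i$-th row of $E$, so $w_i = \sum_j|E_{ij}|\le \norm{E-E^{(k)}}_\infty$. Since the right-hand side tends to zero, we can make $w_i$ arbitrarily small for all sufficiently large $i$, giving the decay property.

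For compactness, once (ii) is established, $E$ is the operator-norm limit of $E^{(k)}$. Each $E^{(k)}$ is supported in a $k\times k$ block, hence has rank at most $k$ and so is compact. Since the space of compact operators is norm-closed in $\Endo{\ell^\infty}$, $E$ is compact. The main obstacle I anticipate is the first supremum in the (i) $\Rightarrow$ (ii) direction: a naive bound by $\sup_i w_i$ fails to vanish, so the split into the finite set $\{i\le N\}$ and the ``tail of small $w_i$'' is what makes the argument work.
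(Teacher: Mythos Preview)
Your proof is correct and follows essentially the same approach as the paper's. Both arguments identify the row-wise decomposition of $\norm{E-E^{(k)}}_\infty$, use the decay of $w=|E|e$ to control rows with large index, and handle the remaining finitely many rows via the summability of each row; the converse and compactness parts are likewise identical in spirit. Your presentation is in fact slightly more explicit in writing out the formula for $\norm{E-E^{(k)}}_\infty$ and in separating the range $N<i\le k$, which the paper leaves implicit.
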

\begin{proof}
	We first prove $(i) \implies (ii)$. 
 Since $w=|E| e$ is such that
$\lim_{i} w_i=0$, then for any $\epsilon>0$ there exists $m$
such that $w_i\le\epsilon$ for any $i> m$. Therefore, the matrix $E^{(m)}$
is such that the vector $v=|E-E^{(m)}|e$ has components $v_i\le
\epsilon$ for $i> m$. On the other hand, since $|E|\in
\Endo{\ell^\infty}$, then each row $r^{(i)}=e_i^T |E|$ 
has sum of its
entries finite, therefore, 
 there exists $n_i$ such that
$\sum_{j=n_i+1}^\infty r^{(i)}_j\le\epsilon$.  Setting
$n=\max\{m,n_1,n_2,\ldots,n_m\}$ yields $\norm{E-E^{(k)}}_\infty\le\epsilon$
for any $k\ge n$.  
Concerning $(ii) \implies (i)$, we consider $v^{(k)}=|E-E^{(k)}|e$,
and $ w=|E|e$. Observe that, since $E^{(k)}_{i,j}=0$ for $i> k$ or for
$j>k$, then $v^{(k)}_i=w_i$ for $i> k$. Moreover, since $\norm{
  v^{(k)}}_\infty =\norm{E-E^{(k)}}_\infty$, then $\lim_k
\norm{v^{(k)}}_\infty =\lim_k\norm{E-E^{(k)}}_\infty=0$ so that for
any $\epsilon>0$ there exists $k_0$ such that
$\norm{v^{(k)}}_\infty\le \epsilon$ for any $k\ge k_0$, whence $
v^{(k)}_i\le\epsilon$ for any $i$.  In particular,
$v_i^{(k_0)}\le\epsilon$ for any $i$.  Thus, since $w_i=v^{(k_0)}_i$
for any $i> {k_0}$, then $w_i\le \epsilon$ for any $i> {k_0}$.
Finally, since $E$ is the limit of compact operators it is compact.
\end{proof}

An immediate consequence of Lemma~\ref{th:usfl} is that any $A\in\QTD$ can be
approximated by a finitely representable matrix in
$\QTD$ as stated in the following corollary.

\begin{corollary}\label{cor:usfl}
	Let $A = T(a) + E \in \QTD$. Then, for every $\epsilon > 0$
        there exists a Laurent polynomial $\tilde a(z)$ and an integer
        $k$ such that $\norm{A - T(\tilde a) - E^{(k)}}_\infty \leq
        \epsilon$.
\end{corollary}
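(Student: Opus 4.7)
The plan is to split the error into the Toeplitz part and the correction part, bound each by $\epsilon/2$, and combine via the triangle inequality; both bounds are essentially already available from the preceding material.

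First I would handle the Toeplitz part. Since $A \in \QTD$, by Definition~\ref{def:qt-inf} the symbol $a(z)$ belongs to the Wiener class~$\mathcal{W}$, so its Laurent coefficients are absolutely summable. Truncating the Laurent series at a sufficiently large degree therefore produces a Laurent polynomial $\tilde a(z)$ with $\|a - \tilde a\|_{\mathcal W} \leq \epsilon/2$. Applying the Gohberg--Feldman estimate from Theorem~\ref{thm1} with $p = \infty$, we get
\begin{equation*}
\|T(a) - T(\tilde a)\|_\infty \;=\; \|T(a - \tilde a)\|_\infty \;\leq\; \|a - \tilde a\|_{\mathcal W} \;\leq\; \tfrac{\epsilon}{2}.
\end{equation*}

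Next, for the correction, since $E$ has the decay property by the definition of $\QTD$, Lemma~\ref{th:usfl} directly gives the equivalence with $\lim_k \|E - E^{(k)}\|_\infty = 0$. Hence there exists an integer $k$ such that $\|E - E^{(k)}\|_\infty \leq \epsilon/2$.

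Combining the two estimates via the triangle inequality yields
\begin{equation*}
\|A - T(\tilde a) - E^{(k)}\|_\infty \;\leq\; \|T(a) - T(\tilde a)\|_\infty + \|E - E^{(k)}\|_\infty \;\leq\; \epsilon,
\end{equation*}
which is exactly the claim. There is no real obstacle here: the corollary is essentially a repackaging of Lemma~\ref{th:usfl} together with the standard density of Laurent polynomials in $\mathcal W$ and the Toeplitz operator-norm bound from Theorem~\ref{thm1}. The only point worth being careful about is choosing the truncation degree of $\tilde a$ and the cutoff index $k$ independently (each controlled by $\epsilon/2$) so that the final common $k$ in the statement can be taken as $\max$ of the two without affecting the Toeplitz estimate.
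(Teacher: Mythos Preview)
Your proof is correct and matches the paper's intended argument: the paper states the corollary as an immediate consequence of Lemma~\ref{th:usfl}, and what you have written is exactly the natural unpacking of that remark (approximate $a$ by a Laurent polynomial in the Wiener norm, approximate $E$ by $E^{(k)}$ via Lemma~\ref{th:usfl}, and use the triangle inequality). One small comment: your closing sentence about taking a ``common $k$'' is unnecessary and slightly misleading, since the statement treats $\tilde a$ and $k$ as independent parameters; there is no need to tie the truncation degree of $\tilde a$ to the cutoff index $k$.
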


The class of matrices having the decay property is closed as specified
by the following

\begin{theorem}\label{thm:decayclosure}
Let $E_k\in \Endo{\ell^\infty}$, for $k\in\mathbb Z^+$, have the decay
property. Assume that there exists $E\in \Endo{\ell^\infty}$ such that
$\lim_k\norm{E_k-E}_\infty=0$. Then $E$ has the decay property as
well.
\end{theorem}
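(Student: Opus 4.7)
The plan is to work directly from the definition of the decay property, namely that $w := |E|e$ has components $w_i \to 0$. The key ingredient is the standard identity
\[
\norm{F}_\infty = \sup_i \sum_{j \in \mathbb Z^+} |F_{ij}| = \sup_i (|F|e)_i
\]
for any $F \in \Endo{\ell^\infty}$ representable as a matrix with respect to the canonical basis. In particular, operator-norm convergence $E_k \to E$ forces every absolute row sum of $E - E_k$ to be uniformly small in $i$, i.e., $\sup_i (|E-E_k|e)_i = \norm{E-E_k}_\infty \to 0$.

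Given $\epsilon > 0$, I would then proceed by a two-step $\epsilon/2$ argument. First, pick $k$ large enough that $\norm{E - E_k}_\infty < \epsilon/2$. The componentwise triangle inequality $|E_{ij}| \le |(E - E_k)_{ij}| + |(E_k)_{ij}|$, summed in $j$, yields
\[
(|E|e)_i \;\le\; (|E - E_k|e)_i + (|E_k|e)_i \;\le\; \tfrac{\epsilon}{2} + (|E_k|e)_i
\]
for every $i$. Second, since $E_k$ has the decay property, there exists $N$ such that $(|E_k|e)_i < \epsilon/2$ for all $i > N$. Combining the two estimates gives $(|E|e)_i < \epsilon$ for every $i > N$, so $\lim_i (|E|e)_i = 0$ and $E$ has the decay property.

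I do not anticipate a genuine obstacle: the substance of the argument is just the compatibility of the absolute-row-sum functional with the operator $\ell^\infty$-norm, followed by a routine uniform-limit argument showing that a uniform limit of sequences tending to zero still tends to zero. An alternative route would go through Lemma~\ref{th:usfl}, proving $\norm{E - E^{(n)}}_\infty \to 0$ by splitting
\[
E - E^{(n)} = (E - E_k) + (E_k - E_k^{(n)}) + (E_k^{(n)} - E^{(n)})
\]
and using that the truncation $F \mapsto F^{(n)}$ is a contraction in $\norm{\cdot}_\infty$ (so the third term is bounded by $\norm{E-E_k}_\infty$), with $k$ chosen first and $n$ chosen second. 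This reduces to the same three-epsilon argument but is slightly less direct than the componentwise proof sketched above.
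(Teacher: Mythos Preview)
Your proof is correct and follows essentially the same approach as the paper: both arguments bound the row sums of $|E|$ by those of $|E_k|$ plus $\norm{E-E_k}_\infty$ (the paper via the reverse triangle inequality $||E_k|-|E||\le|E_k-E|$, you via the forward one), and then finish with the identical two-step $\epsilon$ argument. Your write-up is in fact slightly cleaner, since only the one-sided inequality is needed.
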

\begin{proof}
  It is enough to prove that $\lim_i v_i=0$ for $v=|E|e$. Denote
  $v^{(k)}=|E_k|e$. From $|E_k-E|\ge ||E_k|-|E||$ we deduce that
  $|E_k-E|e\ge ||E_k|-|E||e\ge|v^{(k)}-v|$. Whence
  $\norm{E_k-E}_\infty=\norm{|E_k-E|e }_\infty\ge
  \norm{v^{(k)}-v}_\infty$. This implies that $\lim_k \sup_i
  |v^{(k)}_i-v_i|=0$. We now deduce that $\lim_i v_i=0$.  From the
  condition $\lim_k \sup_i |v^{(k)}_i-v_i|=0$ we find that for any
  $\epsilon>0$ there exists $k_0$ such that $ \sup_i
  |v^{(k)}_i-v_i|\le\epsilon$ for any $k\ge k_0$, that is
  $|v^{(k)}_i-v_i|\le\epsilon$ for any $i$ and for $k\ge k_0$.
  Therefore, 
  $v_i\in[v_i^{(k)}-\epsilon, v_i^{(k)}+\epsilon]$ for any
  $i$ and for any $k\ge k_0$. On the other hand from the condition
  $\lim_i v^{(k)}_i=0$ for any $k$ we deduce that for any $\epsilon>0$
  and for any $k$ there exists $i_k$ such that
  $|v_i^{(k)}|\le\epsilon$ for any $i\ge i_k$. Combining the two
  properties yields $v_i\in[-2\epsilon,2\epsilon]$ for any $i\ge
  i_{k_0}$. That is $\lim_i v_i=0$.
\end{proof}

We consider the quotient
space of $\Endo{\ell^\infty}$ 
under the equivalence relation:
$A \doteq B$ if and only if $A - B$ has the decay property. If 
$A$ is representable with a finite number of parameters 
then, in light of Lemma~\ref{th:usfl}, every $B$ such that 
$A \doteq B$ is also representable using a finite number of
parameters. Matrices with the decay property form a right ideal.
\begin{lemma}\label{lem:ideal}
	Let $A,B\in\Endo{\ell^\infty}$ such that $A\doteq 0$. Then
\begin{enumerate}[(i)]
\item if $B\doteq 0$,
      then $A+B\doteq 0$,
\item $A  B \doteq 0$,
\item if $B=T(b)$ with $b\in\mathcal W$, 
      then $B A\doteq 0$.
\end{enumerate}
	\end{lemma}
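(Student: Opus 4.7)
The plan is to reduce all three parts to the elementary componentwise inequality $|XY|e\le|X||Y|e$, which follows from $|(XY)_{i,k}|\le\sum_j|X_{i,j}||Y_{j,k}|$; absolute convergence of this sum is guaranteed whenever $X,Y\in\Endo{\ell^\infty}$, since each row of $X$ lies in $\ell^1$ with $\ell^1$-norm at most $\|X\|_\infty$, while the entries of $Y$ are uniformly bounded by $\|Y\|_\infty$. Throughout I will write $w_X:=|X|e$, so that the statement $X\doteq 0$ is simply $(w_X)_i\to 0$.

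For part (i), the triangle inequality gives the entrywise bound $w_{A+B}\le w_A+w_B$, and both summands on the right tend to zero by the hypotheses $A\doteq 0$ and $B\doteq 0$; hence $w_{A+B}\to 0$.

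For part (ii), the pointwise inequality yields $(w_{AB})_i\le\sum_j|A_{i,j}|(w_B)_j$. Since $B\in\Endo{\ell^\infty}$, the vector $w_B$ is bounded with $\|w_B\|_\infty\le\|B\|_\infty$, so pulling this bound outside the sum gives $(w_{AB})_i\le\|B\|_\infty(w_A)_i\to 0$, whence $AB\doteq 0$.

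Part (iii) is the substantial case, since $T(b)$ itself need not have the decay property and (ii) cannot simply be applied with the factors reversed. Writing $w:=w_A$, the pointwise bound gives $(w_{T(b)A})_i\le\sum_{j\ge 1}|b_{j-i}|w_j$. I would argue by an $\epsilon$-splitting in $j$: given $\epsilon>0$, choose $N$ so that $w_j\le\epsilon$ for all $j\ge N$, and split the sum at $j=N$. The tail $\sum_{j\ge N}|b_{j-i}|w_j$ is then bounded, uniformly in $i$, by $\epsilon\|b\|\w$. The head $\sum_{j=1}^{N-1}|b_{j-i}|w_j$ is controlled by $\|w\|_\infty\sum_{j=1}^{N-1}|b_{j-i}|$; for each of the finitely many fixed $j<N$, the index $j-i$ tends to $-\infty$ as $i\to\infty$, and the Wiener condition $\sum_{k\in\mathbb Z}|b_k|<\infty$ forces $|b_k|\to 0$ as $|k|\to\infty$, so each term in the head tends to zero. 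Combining the two bounds gives $\limsup_i(w_{T(b)A})_i\le\epsilon\|b\|\w$, and letting $\epsilon\to 0$ yields $T(b)A\doteq 0$.

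The only nontrivial step is the splitting in part (iii); its correctness relies on exploiting two distinct consequences of $b\in\mathcal W$, namely the finiteness of $\|b\|\w$ to control the tail uniformly in $i$, and the decay $|b_k|\to 0$ at $|k|=\infty$ to kill each of the finitely many head contributions. The remaining parts are essentially bookkeeping with the pointwise inequality and the row-sum characterization of $\|\cdot\|_\infty$.
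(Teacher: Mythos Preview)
Your proof is correct. Parts (i) and (ii) coincide with the paper's argument essentially verbatim: the paper dispatches (i) by ``applying the definition'' and for (ii) writes $|AB|e\le|A||B|e\le\norm{B}_\infty|A|e$, exactly your bound.

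For part (iii) you take a genuinely different route. The paper proceeds in two stages: it first assumes $b$ is a Laurent polynomial of half-bandwidth $k$, so that $(|T(b)|w)_i=\sum_{j=i-k}^{i+k}|b_{j-i}|w_j$ is a finite sliding window of $w$ and therefore tends to zero; it then approximates a general $b\in\mathcal W$ by truncations $b_k$, observes $\norm{T(b)A-T(b_k)A}_\infty\to 0$, and invokes the closure of the decay property under norm limits (Theorem~\ref{thm:decayclosure}) to conclude. Your argument is a single direct $\epsilon$-splitting of the sum $\sum_{j\ge 1}|b_{j-i}|w_j$, using $\norm{b}\w<\infty$ for the tail and $|b_k|\to 0$ for the finitely many head terms. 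Your approach is more elementary and self-contained, since it does not appeal to Theorem~\ref{thm:decayclosure}; the paper's approach is more modular and illustrates the general pattern---prove for banded symbols, then pass to the Wiener closure---that recurs elsewhere in the analysis.
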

\begin{proof}
	Claim $(i)$ easily follows applying the definition.
        Concerning $(ii)$, we notice that $|AB|e\leq |A||B|e\leq
        \norm{B}_\infty|A|e$ which is an infinitesimal vector.  Let
        $w=|A|e$ with entries $w_i$ such that
        $\lim_{i\to\infty}w_i=0$.  In order to prove $(iii)$, let us
        start by considering $B=T(b)$ where the symbol $b$ has finite
        support, more precisely $b_j=0$ whenever $|j|>k$, for some
        $k\in\mathbb N$. Then we have $|BA|e\leq |B|w=g$ whose entries
        $g_i$ verify $g_i=\sum_{j=i-k}^{i+k}|b_{j-i}||w_j|$ for
        $i>k$. Therefore, 
        $g_i\to 0$. If $b$ has not finite support we
        consider $b_k$ the Laurent polynomial obtained by truncating
        $b$ with coefficients in the exponent range $[-k,k]$; clearly
        $\norm{T(b)-T(b_k)}_\infty\to 0$ which implies
        $\norm{T(b)A-T(b_k)A}_\infty\to 0$. Hence, the claim follows
        applying Theorem~\ref{thm:decayclosure}.
	\end{proof}
Note that, $A\doteq 0\centernot\implies B A\doteq 0$; indeed consider
$A=e_1e_1^T$ and $B=e e_1^T$ as a counterexample.

We shall now prove that the Hankel matrices arising
in Theorem \ref{thm1} have the decay property. 
\begin{lemma}
  Let $a(z)\in\mathcal W$, 
       then  $H(a^-)\doteq 0$ and $H(a^+)\doteq 0$.
  	\end{lemma}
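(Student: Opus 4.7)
The plan is to verify the decay property directly from the definitions, by computing the row sums $(|H(a^\pm)|e)_i$ and recognizing each as the tail of the absolutely convergent series $\sum_{k \in \mathbb{Z}} |a_k|$.

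Concretely, first I would fix $H(a^+)$ and note that by the definition $H(a^+) = (a_{i+j-1})_{i,j \in \mathbb{Z}^+}$, the $i$-th component of $|H(a^+)|e$ equals
\[
\sum_{j=1}^{\infty} |a_{i+j-1}| = \sum_{k=i}^{\infty} |a_k|.
\]
Since $a(z) \in \mathcal{W}$ means $\sum_{k \in \mathbb{Z}} |a_k| < \infty$, the one-sided tail $\sum_{k \geq i} |a_k|$ is the tail of a convergent nonnegative series and hence tends to $0$ as $i \to \infty$. This shows $H(a^+) \doteq 0$. The analogous computation for $H(a^-) = (a_{-i-j+1})_{i,j \in \mathbb{Z}^+}$ gives $(|H(a^-)|e)_i = \sum_{k=i}^{\infty} |a_{-k}|$, which again vanishes in the limit.

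As a side check I would also verify that $H(a^\pm)$ indeed represent elements of $\Endo{\ell^\infty}$, so that the notion of decay property in Definition~\ref{def:decay} applies; this is already guaranteed by Theorem~\ref{thm1}, but it also follows from the above estimates since $\norm{H(a^\pm)}_\infty = \sup_i (|H(a^\pm)|e)_i \le \norm{a}\w$.

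There is no real obstacle here: the argument is a one-line tail-of-series computation. The only point worth being careful about is matching indices correctly in the Hankel definitions (the shift by $-1$ in the row/column indexing) so that the row sums collapse neatly into tails of $\sum_k |a_k|$.
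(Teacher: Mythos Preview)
Your proposal is correct and matches the paper's own proof essentially verbatim: the paper also computes $w_i=(|H(a^-)|e)_i=\sum_{j=i}^\infty|a_{-j}|$, observes this is the tail of a summable series and hence tends to zero, and then says the same holds for $H(a^+)$. Your extra remark about $H(a^\pm)\in\Endo{\ell^\infty}$ is a harmless sanity check not present in the original.
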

\begin{proof}
	Consider the vector $w=|H(a^-)|e$; it holds that
	$w_i=\sum_{j=i}^{\infty}|a_{-j}|$, whence $\lim_i w_i=0$, i.e.,
	$H(a^-)\doteq 0$. The same holds for $H(a^+)$.
	\end{proof}
This, combined with Lemma~\ref{lem:ideal}, yields the following Corollary.
\begin{corollary}\label{cor:T(ab)}
	Let $a,b\in\mathcal W$, 
        then
	\begin{equation}\label{eq:dec}
	\begin{aligned}
	&T(a)T(b)\doteq T(ab)\doteq T(b)T(a),\\
	&T(a)T(a^{-1})\doteq I,\quad \hbox{if }a(z)\ne 0\hbox{ for }|z|=1.
	\end{aligned}
	\end{equation}
	\end{corollary}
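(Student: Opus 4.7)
The plan is to derive both assertions directly from the Gohberg--Feldman identity of Theorem~\ref{thm1}, combined with the two preceding lemmas (the Hankel matrices $H(a^\pm)$ satisfy $\doteq 0$, and $\doteq 0$ is stable under composition on the right with arbitrary bounded operators and on the left with Toeplitz operators).

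First I would handle $T(a)T(b) \doteq T(ab)$. By Theorem~\ref{thm1}, we have the exact identity
\[
T(a)T(b) - T(ab) = -H(a^-)H(b^+).
\]
The preceding lemma gives $H(a^-) \doteq 0$, and Lemma~\ref{lem:ideal}(ii) then yields $H(a^-)H(b^+) \doteq 0$ (since $H(b^+) \in \Endo{\ell^\infty}$ by Theorem~\ref{thm1}). Hence $T(a)T(b) \doteq T(ab)$. For the companion statement $T(b)T(a) \doteq T(ab)$, I would apply the same argument with the roles of $a$ and $b$ swapped, obtaining $T(b)T(a) \doteq T(ba)$; since $\mathcal{W}$ is a commutative algebra, $T(ba) = T(ab)$, and the two relations chain together by transitivity of $\doteq$ (an immediate consequence of Lemma~\ref{lem:ideal}(i) applied to differences).

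For the second identity, the key input is Wiener's theorem (the classical one on absolutely convergent Fourier series): if $a \in \mathcal W$ does not vanish on the unit circle, then $a^{-1} \in \mathcal W$ as well. Consequently $T(a^{-1})$ is defined as a bounded Toeplitz operator, and the already established relation yields
\[
T(a)T(a^{-1}) \doteq T(a \cdot a^{-1}) = T(1) = I.
\]

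I do not anticipate a substantive obstacle: the identities are purely algebraic consequences of the Gohberg--Feldman factorization and of the fact that the class of matrices with the decay property forms a right ideal containing the Hankel blocks $H(a^\pm)$. The only non-trivial external fact invoked is Wiener's inversion theorem in $\mathcal W$, which is standard and not proved in the excerpt. A minor point to keep in mind is to cite Theorem~\ref{thm1} to ensure $H(b^+) \in \Endo{\ell^\infty}$ so that Lemma~\ref{lem:ideal}(ii) actually applies.
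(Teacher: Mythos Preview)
Your proposal is correct and follows exactly the route the paper intends: the paper simply states that the preceding lemma ($H(a^\pm)\doteq 0$) combined with Lemma~\ref{lem:ideal} yields the corollary, and your argument spells out precisely those steps, including the appeal to Wiener's inversion theorem for the second identity.
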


The next result will be crucial for proving the closedness of  $\QTD$.
\begin{lemma}\label{lem:norm}
	If $A\in\QTD$, $A=T(a)+E$, then $\norm{A}_\infty\ge \norm{a}_\w$.
\end{lemma}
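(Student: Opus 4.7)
The plan is to exploit the standard identity that, for a matrix $A$ representing a bounded operator on $\ell^\infty$, the operator norm equals the supremum of the absolute row sums: $\|A\|_\infty = \sup_{i \in \mathbb Z^+} \sum_{j \in \mathbb Z^+} |A_{i,j}|$. The lower bound $\sup_i \sum_j |A_{i,j}| \le \|A\|_\infty$ is obtained by testing $A$ on the bounded vector $x$ with $x_j = \operatorname{sign}(A_{i,j})$, which satisfies $\|x\|_\infty \le 1$ and realizes the $i$-th row sum in the $i$-th entry of $Ax$. Once this identity is in hand, the statement reduces to showing that the row sums of $A = T(a)+E$ can be made arbitrarily close to $\|a\|\w$ by choosing the row index $i$ sufficiently large.

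Next I would split the row sum via the triangle inequality, writing, for each row $i$,
\begin{equation*}
\sum_{j \in \mathbb Z^+} |A_{i,j}| = \sum_{j \in \mathbb Z^+} |a_{j-i} + E_{i,j}| \ge \sum_{j \in \mathbb Z^+} |a_{j-i}| - \sum_{j \in \mathbb Z^+} |E_{i,j}|.
\end{equation*}
The first sum on the right equals $\sum_{k \ge -i+1} |a_k|$, which converges monotonically to $\sum_{k \in \mathbb Z} |a_k| = \|a\|\w$ as $i \to \infty$. The second sum equals the $i$-th entry of $|E|e$, and by the decay property of $E$ this entry tends to $0$. Combining these two facts gives
\begin{equation*}
\liminf_{i \to \infty} \sum_{j \in \mathbb Z^+} |A_{i,j}| \ge \|a\|\w,
\end{equation*}
and taking the supremum over $i$ yields $\|A\|_\infty \ge \|a\|\w$.

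The argument is essentially two short observations (the row-sum characterization of $\|\cdot\|_\infty$ and the tail behaviour of the Wiener norm), so there is no real obstacle. The only minor subtlety is justifying the lower bound $\sup_i \sum_j |A_{i,j}| \le \|A\|_\infty$: one must be sure that the test vector $x_j = \operatorname{sign}(A_{i,j})$ lies in the span on which $A$ acts by the matrix representation. Since $A \in \QTD$ is assumed to be represented by a matrix over the standard basis and $x$ is bounded, there is no issue, but it is worth stating explicitly given the remark earlier in the excerpt that $\ell^\infty$ is not separable.
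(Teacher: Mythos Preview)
Your proof is correct and follows essentially the same approach as the paper: both use the row-sum characterization $\|A\|_\infty=\sup_i e_i^T|A|e$, bound the $i$-th row sum below by $e_i^T|T(a)|e - e_i^T|E|e$, and let $i\to\infty$ so that the Toeplitz part tends to $\|a\|\w$ while the correction vanishes by the decay property. The paper phrases this with explicit $\epsilon$'s whereas you use a $\liminf$, but the content is identical.
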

\begin{proof}
	We prove that for any $\epsilon>0$ there exists $i_0$ such
        that for any $i\ge i_0$ we have $e_i^T|A|e\ge
        \norm{a}_\w-2\epsilon$. Since $\norm{A}_\infty=\sup_i
        e_i^T|A|e$, then from the latter inequality it follows that
        $\norm{A}_\infty\ge \norm{a}_\w$.  In order to prove the claim,
        we observe that since $|A|\ge |T(a)|-|E|$ we have
	$
	e_i^T|A|e\ge e_i^T|T(a)|e-e_i^T|E|e$.
	From the decay property of $E$ we have that there exists $h_0$ such
	that for any $i\ge h_0$ we have $e_i|E|e\le\epsilon$. On the
	other hand, since
	$e_i^T|T(a)|=\sum_{j=-i}^\infty|a_{j}|=\norm{a}_\w-\sum_{j=-\infty}^{-i-1}|a_j|$,
	and since $a(z)\in\mathcal W$, 
        then there exists $k_0$ such that
	$e_i^T|T(a)|=\norm{a}\w-\epsilon_i$, where $|\epsilon_i|\le \epsilon$
	for any $i\ge k_0$. Thus for any $i\ge i_0=\max\{h_0,k_0\}$ we have
	$
	e_i^T|A|e\ge \norm{a}\w-|\epsilon_i|-\epsilon\ge \norm{a}\w-2\epsilon
	$.
\end{proof}
\begin{theorem}\label{thm:banach}
	The class $\QTD$ is a Banach algebra with the
	infinity norm.
\end{theorem}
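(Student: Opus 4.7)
The plan is to verify each ingredient of ``Banach algebra with infinity norm'' in turn: (a) that $\QTD$ is a subspace of $\Endo{\ell^\infty}$, (b) that the decomposition $A=T(a)+E$ is unique so the symbol-plus-correction structure is intrinsic, (c) that it is closed under products, and (d) that it is complete with respect to $\|\cdot\|_\infty$. Submultiplicativity and the norm axioms are inherited from $\Endo{\ell^\infty}$ for free, so nothing needs to be proved there.

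The subspace property is immediate: if $A=T(a)+E_a$ and $B=T(b)+E_b$ are in $\QTD$, then $A+\lambda B=T(a+\lambda b)+(E_a+\lambda E_b)$, and the symbol belongs to $\mathcal W$ while the correction still has the decay property by part $(i)$ of Lemma~\ref{lem:ideal}. For uniqueness of the decomposition, I would suppose $T(a)+E=T(a')+E'$ in $\QTD$ and consider $T(a-a')=E'-E$. The right-hand side has the decay property, so in particular $\bigl(|T(a-a')|e\bigr)_i\to 0$ as $i\to\infty$; but the $i$-th row sum of $|T(a-a')|$ equals $\sum_{j\geq 1-i}|a_j-a'_j|\to\|a-a'\|\w$, forcing $a=a'$ and hence $E=E'$. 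This justifies speaking unambiguously of ``the'' Toeplitz part and correction of an element of $\QTD$.

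Closure under multiplication is where Theorem~\ref{thm1}, Corollary~\ref{cor:T(ab)} and Lemma~\ref{lem:ideal} combine. Expanding $AB=T(a)T(b)+T(a)E_b+E_aT(b)+E_aE_b$ and replacing $T(a)T(b)$ by $T(ab)+\bigl(T(a)T(b)-T(ab)\bigr)$, the leading term $T(ab)$ has symbol in $\mathcal W$, while each of the four remaining terms has the decay property: the first because $T(a)T(b)\doteq T(ab)$ by Corollary~\ref{cor:T(ab)}, the second by Lemma~\ref{lem:ideal}$(iii)$ (Toeplitz on the left times a decay matrix), and the last two by Lemma~\ref{lem:ideal}$(ii)$ (decay times a bounded operator). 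Part $(i)$ of the same lemma shows that their sum again has the decay property, so $AB\in\QTD$.

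The step I expect to be the main obstacle is completeness, and it hinges on Lemma~\ref{lem:norm} together with Theorem~\ref{thm:decayclosure}. Given a Cauchy sequence $A_k=T(a_k)+E_k$ in $(\QTD,\|\cdot\|_\infty)$, the estimate of Lemma~\ref{lem:norm} applied to $A_k-A_j=T(a_k-a_j)+(E_k-E_j)$ yields $\|a_k-a_j\|\w\leq\|A_k-A_j\|_\infty$, so $\{a_k\}$ is Cauchy in the Banach algebra $\mathcal W$ and converges to some $a\in\mathcal W$. By Theorem~\ref{thm1}, $T(a_k)\to T(a)$ in the operator norm; since $\Endo{\ell^\infty}$ is itself complete, $A_k\to A$ in $\|\cdot\|_\infty$ and consequently $E_k=A_k-T(a_k)\to A-T(a)$ in $\|\cdot\|_\infty$. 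Theorem~\ref{thm:decayclosure} then guarantees that $E:=A-T(a)$ inherits the decay property, so $A=T(a)+E\in\QTD$, completing the proof.
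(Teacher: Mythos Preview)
Your proof is correct and follows essentially the same route as the paper: closure under addition and scalar multiplication is immediate, closure under products is obtained from Lemma~\ref{lem:ideal} and Corollary~\ref{cor:T(ab)}, and completeness is proved by using Lemma~\ref{lem:norm} to make the symbols Cauchy in $\mathcal W$, then passing to the limit and invoking Theorem~\ref{thm:decayclosure} on the corrections. The uniqueness argument you insert is a pleasant extra (the paper does not include it in this proof, though it is implicit in Lemma~\ref{lem:norm} and made explicit later in Remark~\ref{rem:uniqueness} for the larger class $\EQT$), but it is not needed for the theorem as stated.
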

\begin{proof} 
	For the property of algebra it is enough to show that if
        $A=T(a)+E_a$, $B=T(b)+E_b$ are in $\QTD$, then also $A+B$,
        $\alpha A$ and $A B$ are in $\QTD$.  For the first two
        matrices the property is trivial since $\alpha E_a$ and
        $E_a+E_b$ have the decay property. For the third condition,
        Lemma~\ref{lem:ideal} and Corollary~\ref{cor:T(ab)} imply
        $AB\doteq T(ab)$.
	It remains to prove that $\QTD$ is complete. If
	$X_k=T(x_k)+E_k\in\QTD$, $k\ge 0$, is a Cauchy sequence with the infinity
	norm, then, since $ \Endo{\ell^\infty}$ is a Banach space there
	exists $X\in\Endo{\ell^\infty}$ such that
	$\lim_k\norm{X_k-X}_\infty=0$. We have to prove that
	$X\in\QTD$, i.e., $X=T(x)+E$ for some
	$x(z)\in\mathcal W$ and $E\in\Endo{\ell^\infty}$ with the decay
	property. From Lemma \ref{lem:norm} we have
	$\norm{X_k-X_h}_\infty\ge\norm{x_k-x_h}\w$ therefore, since $\{X_k\}_k$ is
	Cauchy, then also $\{x_k(z)\}_k$ is Cauchy with the Wiener norm. Thus,
	since $\mathcal W$ is a Banach space, %
        then there exists
	$x(z)\in\mathcal W$ such that $\lim_k\norm{x_k(z)-x(z)}\w=0$. Now
	consider $E_k-E_h$. Since $E_k-E_h=X_k-X_h+T(x_k-x_h)$ we have
	$\norm{E_k-E_h}_\infty\le\norm{X_k-X_h}_\infty+\norm{x_k-x_h}\w$,
        whence $\{E_k\}_k$
	is Cauchy in $\Endo{\ell^\infty}$ therefore, 
        there exists $E\in
	\Endo{\ell^\infty}$ such that $\lim_k \norm{E_k-E}_\infty=0$.
        It remains to prove
	that $E$ has the decay property.  This follows from
        Theorem~\ref{thm:decayclosure}.
\end{proof}

\subsection{The class $\EQT$}\label{sec:eqt}
The matrices modeling stochastic processes with
restarts do not belong to $\QTD$. Indeed, they belong to $\QTD$ up to a correction part whose columns do not decay to $0$, but instead converge to a nonzero
limit. In particular, the correction does not have the decay property
but it is still (approximately) representable by a finite set of
parameters. 
In this section we introduce an appropriate extension of
$\QTD$.
\begin{definition}\label{def:eqt}
	We say that the semi-infinite matrix $A$ is
        \emph{extended-quasi-Toeplitz} if it can be written in the
        form
	\begin{equation}\label{eq:eqt}
	A=T(a)+E+ev^T,
	\end{equation}
	where $a(z)\in\mathcal W$, $E\doteq 0$ and $v\in\ell^1$.  We
        denote the set of extended-quasi-Toeplitz matrices with the
        symbol $\EQT$.
\end{definition}
Clearly, $\QTD\subset\EQT\subset\Endo{\ell^\infty}$, and in view of
Corollary \ref{cor:usfl} the matrices in these classes are representable
with a finite number of parameters within a given error bound
$\epsilon$. Indeed, the term $ev^T$ in \eqref{eq:eqt} can be
approximated --- in the $\infty$-norm --- by truncating $v\in\ell^1$
to a vector of finite support. Similarly to $\QTD$, the set $\EQT$ is
a Banach algebra.  It is immediate to check that $A, B \in \EQT
\implies A + B \in \EQT$. Multiplication requires some
explicit computations.
\begin{lemma}\label{lem:eqt-mult}
Let $A=T(a)+E_a+ev_a^T$ and $B=T(b)+E_b+ev_b^T$ be matrices in
$\EQT$. Then $C=A B\in\EQT$ and $C=T(c)+E_c+ev_c^T$ where $c=ab$,
$v_c=\left(\sum_{j\in\mathbb Z}a_j\right)v_b+B^Tv_a$ and
\[ E_c=T(a)E_b+E_aT(b)-H(a^-)H(b^+)+E_aE_b+(E_a-H(a^-))ev_b^T.\]
\end{lemma}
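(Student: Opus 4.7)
The plan is to expand $AB$ by bilinearity into nine terms and then regroup them into the Toeplitz part $T(c)$, a rank-one update $e v_c^T$, and a remainder $E_c$ in the decay ideal. I would start from three preliminary identities. The first is Theorem~\ref{thm1}, giving $T(a)T(b) = T(c) - H(a^-)H(b^+)$ with $c = ab$. The second is the formula
\[
T(a)\, e \;=\; \Bigl(\sum_{j\in\mathbb Z} a_j\Bigr) e \;-\; H(a^-)\, e,
\]
obtained by writing $(T(a) e)_i = \sum_{k \ge 1-i} a_k = \sum_{k \in \mathbb Z} a_k - \sum_{k \ge i} a_{-k}$ and recognizing the tail as $(H(a^-) e)_i$. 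The third is the scalar identity $(e v_a^T)(e v_b^T) = (e^T v_a)\, e v_b^T$, well-defined since $v_a \in \ell^1$.

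With these in hand, the nine-term expansion splits cleanly. All contributions of the shape $e w^T$ — namely $(\sum_j a_j) e v_b^T$ (extracted from $T(a)(e v_b^T)$), $e(T(b)^T v_a)^T$ (from $(e v_a^T) T(b)$), $e(E_b^T v_a)^T$ (from $(e v_a^T) E_b$), and $(e^T v_a)\, e v_b^T$ — collapse into $e v_c^T$ with
\[
v_c \;=\; \Bigl(\sum_{j\in\mathbb Z} a_j\Bigr) v_b + T(b)^T v_a + E_b^T v_a + (e^T v_a)\, v_b,
\]
which coincides with the stated $(\sum_j a_j)\, v_b + B^T v_a$ after expanding $B^T v_a = T(b)^T v_a + E_b^T v_a + (e^T v_a) v_b$. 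The remaining six summands $T(a) E_b$, $E_a T(b)$, $-H(a^-) H(b^+)$, $E_a E_b$, $E_a (e v_b^T)$, and $-H(a^-) (e v_b^T)$ assemble precisely into the claimed $E_c$.

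To conclude $C \in \EQT$ I would check $v_c \in \ell^1$ and $E_c \doteq 0$. For $v_c$: the summand $(\sum_j a_j) v_b$ lies in $\ell^1$ since $v_b$ does; $T(b)^T v_a \in \ell^1$ since Theorem~\ref{thm1} bounds $T(b)^T$ on $\ell^1$ by $\norm{b}\w$; and $E_b^T v_a \in \ell^1$ by a Fubini swap,
\[
\sum_j |(E_b^T v_a)_j| \;\le\; \sum_i |v_{a,i}| \sum_j |E_{b,i,j}| \;\le\; \norm{E_b}_\infty \norm{v_a}_1.
\]
For $E_c$: every summand falls under Lemma~\ref{lem:ideal} together with the earlier fact that $H(a^-), H(b^+) \doteq 0$. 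Specifically, $T(a) E_b \doteq 0$ by part (iii), while the other five summands $\doteq 0$ by part (ii), their left factor ($E_a$ or $H(a^-)$) already belonging to the decay ideal.

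The main conceptual subtlety — not a calculation but a trap — is that $e v_b^T$ does \emph{not} have the decay property: its row sums are identically $\norm{v_b}_1$. Consequently one cannot naively invoke Lemma~\ref{lem:ideal}(iii) on $T(a) (e v_b^T)$. The identity $T(a) e = (\sum_j a_j) e - H(a^-) e$ is precisely the tool that splits off the non-decaying component $(\sum_j a_j) e v_b^T$ into $v_c$, leaving the decaying residual $H(a^-) e v_b^T$ for $E_c$. The same asymmetry in Lemma~\ref{lem:ideal} is what forces $B^T v_a$, rather than just a Toeplitz-times-$v_a$ contribution, to appear in $v_c$.
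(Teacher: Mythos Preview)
Your proposal is correct and follows exactly the route the paper takes: the paper's proof consists of the single sentence ``the result follows via a direct computation using the relation $T(a)e=(\sum_j a_j)e-H(a^-)e$'', together with the remark that $E_c\doteq 0$ by Lemma~\ref{lem:ideal}. You have simply written out that direct computation in full, including the verifications ($v_c\in\ell^1$, each summand of $E_c$ in the decay ideal) that the paper leaves to the reader; your highlighted ``subtlety'' about $ev_b^T$ not lying in the decay ideal is precisely why the identity for $T(a)e$ is singled out.
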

\begin{proof}
	The result follows via a direct computation using the relation
        $T(a)e= \left(\sum_j a_j\right) e-H(a^-)e$. Note that,
        $E_c\doteq 0$ in view of Lemma~\ref{lem:ideal}.
	\end{proof}
In order to state the main result of this section, we need the
following generalization of Lemma~\ref{lem:norm}.

\begin{lemma}\label{lem:norm1}
If $A\in\EQT$, $A=T(a)+E+ev^T$, then $\norm{A}_\infty\ge \norm{a}\w+\norm{v}_1$.
\end{lemma}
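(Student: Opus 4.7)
The plan is to extend the argument of Lemma~\ref{lem:norm} by exploiting that the mass of the shifted Toeplitz row $(a_{j-i})_j$ and the mass of $v \in \ell^1$ become asymptotically supported on disjoint sets of column indices $j$ as $i\to\infty$. Concretely, I will again bound $\|A\|_\infty$ from below by $\sup_i e_i^T|A|e$ and show that for arbitrary $\epsilon>0$, the row sum exceeds $\|a\|\w + \|v\|_1 - c\epsilon$ for sufficiently large $i$.

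First I would fix $\epsilon>0$ and choose an integer $m$ with $\sum_{j>m}|v_j|\le \epsilon$, which is possible since $v\in\ell^1$. Next, since $a\in\mathcal W$, I would choose $k$ such that $\sum_{|\ell|>k}|a_\ell|\le \epsilon$. The key observation is that for any $i>m+k$ and any $j\le m$, the index $j-i$ satisfies $j-i\le -k-1$, so
\[
\sum_{j=1}^m |a_{j-i}| \;=\; \sum_{\ell=i-m}^{\infty}|a_{-\ell}| \;\le\; \epsilon,
\]
while $\sum_{j=1}^\infty |a_{j-i}| \ge \|a\|\w - \epsilon$ once $i$ is large enough that $\sum_{\ell\ge i}|a_{-\ell}|\le\epsilon$. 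Finally, by the decay property of $E$, there exists $i_0$ such that $\sum_j|E_{ij}| = (|E|e)_i \le \epsilon$ for all $i\ge i_0$.

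Then, splitting the row sum at index $m$ and using the lower bound $|x+y+z|\ge |y|-|x|-|z|$ with the dominant term chosen differently on each piece, I write
\[
e_i^T|A|e \;=\; \sum_{j\le m}|a_{j-i}+E_{ij}+v_j| \;+\; \sum_{j>m}|a_{j-i}+E_{ij}+v_j|.
\]
On the first sum the dominant term is $|v_j|$, giving a contribution of at least $\|v\|_1 - \epsilon - \epsilon - \epsilon$ after applying the above three estimates. On the second sum the dominant term is $|a_{j-i}|$, giving a contribution of at least $\|a\|\w - 2\epsilon - \epsilon - \epsilon$ (since $\sum_{j>m}|a_{j-i}| \ge \|a\|\w - 2\epsilon$ and the tails of $E$ and $v$ are each at most $\epsilon$). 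Summing and taking $\epsilon\to 0^+$ yields $\|A\|_\infty \ge \|a\|\w + \|v\|_1$.

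The main delicate point is verifying the asymptotic disjointness estimate $\sum_{j=1}^m |a_{j-i}|\le\epsilon$ for large $i$: this is where the fact that $v\in\ell^1$ (rather than merely bounded) is essential, because it forces the bulk of $v$ into a fixed finite index range $j\le m$, which is then separated from the bulk of the shifted Toeplitz row by choosing $i$ large enough. Everything else is a careful bookkeeping of three $\epsilon$-small tails (from $a$, $E$, and $v$) in a reverse triangle inequality argument, entirely analogous to Lemma~\ref{lem:norm}.
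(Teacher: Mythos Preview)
Your proposal is correct and follows essentially the same approach as the paper's proof: both select a row index $i$ large enough that the correction $E$ contributes at most $\epsilon$, the tail of $v$ beyond some fixed cutoff is at most $\epsilon$, and the Toeplitz row has shifted far enough that its mass on the first few columns is at most $\epsilon$, then use a reverse triangle inequality on the resulting disjoint-support decomposition. The only cosmetic difference is organizational: the paper first strips off $E$ from the whole row and then splits $u+v$ at the index $p$ (with $k=2p+1$), whereas you split the row at $m$ first and handle all three terms on each piece; the underlying $\epsilon$-bookkeeping is the same.
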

\begin{proof}
 We prove that for any $\epsilon>0$ there exists $k$ such that
 $\norm{e_k^TA}_1\ge \norm{a}\w+\norm{v}_1-5\epsilon$ so that the
 claim follows from the inequality $\norm{A}_\infty\ge
 \norm{e_k^TA}_1$ and by the arbitrarity of $\epsilon$. To this end,
 given $\epsilon$, it is sufficient to choose $k=2p+1$ where $p$ is
 large enough so that $\sum_{i=p+1}^\infty |v_i|\le\epsilon$,
 $\sum_{i=-\infty}^{-p-1}|a_i|\le\epsilon$ and $w_k\le\epsilon$ where
 $w=|E|e$.  This way the $k$th row of $A$ is $r_k=e_k^TA=v^T+u^T+s^T$
 where $u^T=[a_{-2p},a_{-2p+1},\ldots]$, $s^T=e_k^TE$. Observe that
 $\norm{s}_1=w_k\le\epsilon$ so that
\begin{equation}\label{eq:lemnr0}
\norm{r_k}_1\ge \norm{v+u}_1-\epsilon.
\end{equation} In order to estimate $\norm{v+u}_1$,
decompose $v$ as $v=\tilde v+\hat v$ where $\tilde
v=[v_1,\ldots,v_p,0,\ldots]^T$, $\hat
v=[0,\ldots,0,v_{p+1},\ldots]^T$. Do the same with $u=\tilde u+\hat
u$. Since $\tilde v$ and $\hat v$ have disjoint supports, then
$\norm{\tilde v+\hat u}_1=\norm{\tilde v}_1+\norm{\hat u}_1$,
moreover, thanks to the choice of $p$, we have $\norm{\hat v+\tilde
  u}_1\le 2\epsilon$.  Thus, we deduce that
\begin{equation}\label{eq:lemnr}
  \norm{v+u}_1\ge \norm{\tilde v+\hat u}_1-\norm{\hat v+\tilde u}_1\ge
  \norm{\tilde v}_1+\norm{\hat u}_1-2\epsilon.
\end{equation}
 Finally, since $\tilde v=v-\hat v$ we deduce that $\norm{\tilde
   v}_1\ge \norm{v}_1-\epsilon$, and similarly, $\norm{\hat
   u}_1\ge\norm{u}_1-\epsilon$.  Combining the latter two inequalities
 with \eqref{eq:lemnr0} and \eqref{eq:lemnr}, yields
$
\norm{r_k}_1\ge\norm{v+u}_1-\epsilon\ge\norm{\tilde v+\hat u}_1-5\epsilon
$
which completes the proof.
\end{proof}
\begin{remark}\label{rem:uniqueness}
Lemma~\ref{lem:norm1} allows to easily show the uniqueness of the
decomposition of an element in \EQT. Indeed, suppose there exist two
different representations of the same matrix
$A=T(a)+E_a+ev_a^T=T(a')+E_{a'}+ev_{a'}^T$. Then
\[
0=\norm{A-A}_\infty\geq \norm{a-a'}\w+\norm{v_a-v_{a'}}_1\implies
a\equiv a',\quad v_a= v_{a'}. 
\]
By difference, we finally get $E_a=E_{a'}$.
\end{remark}
\begin{theorem}
The class ${\EQT}$ is a Banach algebra with the infinity norm.
\end{theorem}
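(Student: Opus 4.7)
The plan is to mirror the proof of Theorem~\ref{thm:banach} for $\QTD$, replacing Lemma~\ref{lem:norm} by its $\EQT$-analogue Lemma~\ref{lem:norm1} and using Lemma~\ref{lem:eqt-mult} for the product structure.

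First I would check the algebra properties. Closure under sum and scalar multiplication is immediate from Definition~\ref{def:eqt}, since $\mathcal W$, the set of matrices with the decay property, and $\ell^1$ are each closed under these operations; closure under matrix multiplication is exactly the content of Lemma~\ref{lem:eqt-mult}.

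For the completeness part, let $\{A_k\}_k\subset\EQT$ be a Cauchy sequence with decomposition $A_k=T(a_k)+E_k+e v_k^T$, which is unique by Remark~\ref{rem:uniqueness}. Applied to $A_k-A_h\in\EQT$, Lemma~\ref{lem:norm1} yields
\[
\norm{a_k-a_h}\w+\norm{v_k-v_h}_1\;\le\;\norm{A_k-A_h}_\infty,
\]
so $\{a_k\}_k$ is Cauchy in $\mathcal W$ and $\{v_k\}_k$ is Cauchy in $\ell^1$. Since $\mathcal W$ and $\ell^1$ are Banach spaces, there exist $a\in\mathcal W$ and $v\in\ell^1$ with $a_k\to a$ in $\mathcal W$ and $v_k\to v$ in $\ell^1$. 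Because $\Endo{\ell^\infty}$ is complete, there is also $A\in\Endo{\ell^\infty}$ with $A_k\to A$. I then set $E:=A-T(a)-ev^T$ and use the identity $E_k-E=(A_k-A)-T(a_k-a)-e(v_k-v)^T$, together with $\norm{T(a_k-a)}_\infty\le\norm{a_k-a}\w$ from Theorem~\ref{thm1} and the elementary bound $\norm{e(v_k-v)^T}_\infty=\norm{v_k-v}_1$, to conclude $\norm{E_k-E}_\infty\to 0$. Since each $E_k$ has the decay property, Theorem~\ref{thm:decayclosure} forces $E\doteq 0$, whence $A=T(a)+E+ev^T\in\EQT$.

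The only non-routine step is Lemma~\ref{lem:norm1}, which is already available; once we have the control $\norm{a_k-a_h}\w+\norm{v_k-v_h}_1\le\norm{A_k-A_h}_\infty$ the three components of the decomposition are independently forced to be Cauchy in their natural Banach spaces, and the rest of the argument runs as in the proof of Theorem~\ref{thm:banach}. No further obstacles are expected.
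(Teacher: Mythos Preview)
Your proof is correct and follows exactly the approach of the paper: closure under addition and scalar multiplication is immediate, closure under multiplication is Lemma~\ref{lem:eqt-mult}, and completeness is obtained by rerunning the argument of Theorem~\ref{thm:banach} with Lemma~\ref{lem:norm1} in place of Lemma~\ref{lem:norm}. The paper's own proof merely sketches this in one sentence, whereas you have spelled out the details.
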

\begin{proof}
The class is clearly closed under addition and multiplication by a
scalar. Moreover, it is closed under multiplication in view of
Lemma~\ref{lem:eqt-mult}. In order to prove that it is a Banach space,
it is sufficient to follow the same argument used in the proof of
Theorem \ref{thm:banach} relying on Lemma \ref{lem:norm1}.
\end{proof}

\subsection{Extended \cqttoolbox}\label{sec:eqt-tool}
Here, we describe how the computational framework for $\EQT$ has been
implemented on top of \cqttoolbox~\cite{bmr}. The latest release of
the software includes this tool.

A matrix $A\in\EQT$ is represented relying on the unique decomposition
(see Remark~\ref{rem:uniqueness}) $A=T(a)+E+ev^T$. The terms $T(a)$
and $E$ are represented using the same data structures as the
$\QT_{\infty}$ class. This is possible because the entries of $E\doteq
0$ allows to truncate it to its top-left corner. The format is
extended by storing a truncation $\widetilde v$ of the vector
$v\in\ell^1$. This is performed by requiring $\norm{v-\widetilde
  v}_1\leq\epsilon\norm{A}_{\infty}$. As illustrative example, we report the Matlab code that define the matrix $A_0$ of the Jackson network with reset introduced in Section~\ref{sec:tandem}.

\begin{lstlisting}[mathescape=true]
>> E = $\gamma$ * $\mu_1$ + $\gamma$ - 1;
>> pos = [0 $\lambda_1$]; 
>> neg = [0 $\gamma$ * (1 - p) * $\mu_1$];
>> v = 1 - $\gamma$;
>> A0 = cqt('extended', neg, pos, E, v);
\end{lstlisting}
The arithmetic operations in the class $\EQT$ can be performed by using the standard Matlab arithmetic operators {\tt +,-,*,/,$\tt \backslash$} and the operator {\tt inv}.

We conclude the section by summarizing the relations that link the parameters defining the input
of a matrix operation 
 to those of its outcome. Some of them have
been already presented in Section~\ref{sec:eqt}, the others can be
verified via a direct computation.  In what follows we consider two
$\EQT$ matrices $A=T(a)+E_a+ev_a^T$ and $B=T(b)+E_b+ev_b^T$.

\begin{description}
	\item[Addition] If $C=A+B$, 
        then
	$C=T(a+b)+ E_c + e(v_a+v_b)^T,\qquad E_c=E_a+E_b$.
	\item[Multiplication] If $C=AB$, 
        then
	\begin{align*}
	C&=T(ab)+ E_c + e(s_a
        v_b+B^Tv_a)^T,\qquad  s_a=\sum_{j\in\mathbb
          Z}a_j\\ E_c&=T(a)E_b+E_aT(b)-H(a^-)H(b^+)+E_aE_b+
        (E_a-H(a^-))ev_b^T.
	\end{align*}
	\item[Inversion] The inversion formula is obtained by means of
          the Woodbury identity, considering an $\EQT$ matrix as a rank
          one correction of a $\QTD$ matrix. If $C=A^{-1}$,
          then
	\[
	C= (T(a)+E_a)^{-1}- (T(a)+E_a)^{-1} ev_a^T(T(a)+E_a)^{-1}/ (1+ v_a^T(T(a)+E_a)^{-1}e).
	\]
\end{description}
In this equation, although the terms are not
separated as in the other expressions, all the operations involved are
performed with the addition and multiplication formulas for the $\QTD$
class.

It is interesting to point out that the arithmetic introduced in the Toolbox \cqttoolbox{},
includes also the case of finite QT-matrices where the correction to the Toeplitz part involves the top leftmost and the bottom rightmost corners. This allows to deal effectively with finite matrices of large size. We refer the reader
to \cite[Section 3.5]{bmr} for further details. 

\section{Double QBDs and related random walks in the quarter plane}\label{sec:qbd}
The use of the Matrix Analytic Method of Neuts \cite{neuts:book} allows to
recast the computation of the invariant probability vector of a 
QBD process into 
determining the minimal nonnegative solution $G$ of the matrix equation
\begin{equation}\label{eq:mateqG1}
X=A_{-1}+A_0X+A_1X^2.
\end{equation}
A solution $G=(g_{i,j})_{i,j \in \mathbb Z^+}$ of a matrix equation is said to be minimal nonnegative if $g_{i,j}\ge 0$, and for any other solution 
$X=(x_{i,j})_{i,j \in \mathbb Z^+}$ such that $x_{i,j}\ge 0$ it follows $g_{i,j}\le x_{i,j}$ for any $i,j$.
In this section we consider the case where the equation has infinite
coefficients $A_{-1},A_0,A_1\in\QTD$ that originate from a random walk in the
quarter plane governed by a discrete time Markov chain. In this case, the minimal nonnegative solution $G$ exists,  and we provide
conditions under which $G$ 
belongs to $\QTD$ or to $\EQT$.  The Markov chain
describes the dynamics of a particle $p$ which can occupy the points
of a grid in the quarter plane of integer coordinates $(r,s)$, for
$r,s\ge 0$.  If $p$ occupies an inner position, i.e., if $r,s>0$, then
at each instant of time it can move to $(r+j,s+i)$ with given
probabilities $a_{i,j}$ for $i,j=-1,0,1$.  If the particle is along
the $y$ axis, i.e., if $r=0$ and $s>0$, then it can move to $(j,s+i)$
with given probability $y_{i,j}$ for $i=-1,0,1$, $j=0,1$.  Similarly,
if the particle is along the $x$ axis, i.e., if $r>0$ and $s=0$, then
it can move to $(r+j,i)$ with probability $x_{i,j}$ for $i=0,1$,
$j=-1,0,1$. Finally, if $p$ is in the origin, it can move to the
position $(j,i)$ with probability $o_{i,j}$ for $i,j=0,1$. Figure
\ref{fig:rwqp0} pictorially describes an example of random walk in the
quarter plane.

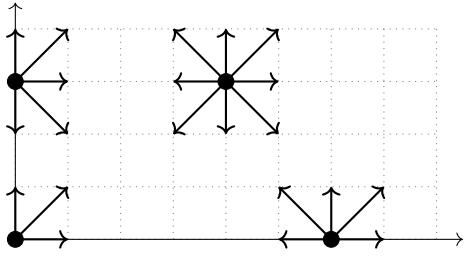
\begin{figure}
\[
  \begin{tikzpicture}[scale=0.7]
\draw[help lines][thin][dotted] (0,0) grid (8,4);
\draw [<->] 
            (0,4.5)--(0,0)--(8.5,0);

\draw [->] [thick] (4,3) -- (5,3); 
\draw [->] [thick] (4,3) -- (5,4);
\draw [->] [thick] (4,3) -- (4,4); 
\draw [->] [thick] (4,3) -- (4,2);
\draw [->] [thick] (4,3) -- (3,2); 
\draw [->] [thick] (4,3) -- (3,3);
\draw [->] [thick] (4,3) -- (3,4); 
\draw [->] [thick] (4,3) -- (5,2);

\draw [fill] (4,3) circle [radius=0.15];


\draw [->] [ thick] (0,3) -- (1,4);
\draw [->] [ thick] (0,3) -- (1,3);
\draw [->] [ thick] (0,3) -- (1,2);
\draw [->] [ thick] (0,3) -- (0,4);
\draw [->] [ thick] (0,3) -- (0,2);
\draw [fill] (0,3) circle [radius=0.15];


\draw [->] [ thick] (6,0) -- (5,0);
\draw [->] [ thick] (6,0) -- (5,1);
\draw [->] [ thick] (6,0) -- (6,1);
\draw [->] [ thick] (6,0) -- (7,1);
\draw [->] [ thick] (6,0) -- (7,0);
\draw [fill] (6,0) circle [radius=0.15];

\draw [->] [ thick] (0,0) -- (1,1);
\draw [->] [ thick] (0,0) -- (0,1);
\draw [->] [ thick] (0,0) -- (1,0);
\draw [fill] (0,0) circle [radius=0.15];


  \end{tikzpicture}
  \]
\caption{Pictorial description of a random walk in the quarter plane,
    the points of the grid which have integer coordinates $(r,s)$,
    correspond to the states of the Markov chain.
    The particle can move in the grid of only one step  inside the quarter plane with assigned probabilities.
}\label{fig:rwqp0}
\end{figure}

The Markov chain which describes this model is defined by the double
infinite set of states $(r,s)$, $r,s\ge 0$, and by the transition probability
matrix $P$ whose entry with row index $(r,s)$ and column index
$(r',s')$ provides the probability of transition from state $(r,s)$ to
state $(r',s')$ in one time unit. Due to the double indices, the matrix $P$ has a multilevel 
structure and can take a different form according to the kind of
lexicographical order which is used to sort the pairs
$(r,s)$.
Denote  $\qtoep(b_0,b_1; a_{-1},a_0,a_1)$ the quasi Toeplitz
matrix with symbol $a_{-1}z^{-1}+a_0+a_1z$ and with correction
$E=e_1(b_0-a_0,b_1-a_1,0,\ldots)$. Similarly, denote  the block
quasi Toeplitz matrix 
$\qtoep(B_0,B_1;\,A_{-1},A_0,A_1)$.
Ordering the states column-wise
 as $(r,s)$, $s=0,1,\ldots,~r=0,1,\ldots$,
yields
$P=\qtoep(B_0,B_1;\,A_{-1},A_0,A_1)$, with
$A_i=\qtoep(y_{i,0},y_{i,1};\, a_{i,-1},a_{i,0},a_{i,1})$, 
$B_i=\qtoep(o_{i,0},o_{i,1};\, x_{i,-1},x_{i,0},x_{i,1})$.
More specifically we have
\begin{equation}\label{eq:P}
P=
\begin{bmatrix}
B_0&B_1\\
A_{-1}&A_0&A_1\\
&\ddots&\ddots&\ddots
\end{bmatrix}.
\end{equation}
Ordering the states row-wise
for $r=0,1,\ldots,~s=0,1,\ldots$,
yields 
 $\wh P=\qtoep(\wh B_0,\wh B_1;\, \wh A_{-1},\wh A_0,\wh A_1)$,  with
 $\wh A_j=\qtoep(x_{0,j},x_{1,j};\, a_{-1,j},a_{0,j},a_{1,j})$,
 $\wh B_j=\qtoep(o_{0,j},o_{1,j};\, y_{-1,j},y_{0,j},y_{1,j})$.
The matrix \eqref{eq:P} defines a double QBD  
  process (DQBD) \cite{miya},
\cite{lr:book}, 
which leads to the matrix equation \eqref{eq:mateqG1}. We have a
similar equation if the row-wise ordering of the states is adopted.
We refer to the row-wise representation as the {\em flipped version}
which is obtained by exchanging the roles of the axes.

It is useful to denote 
\[
\begin{array}{ll}
x_{i,:}(z)=x_{i,-1}z^{-1}+x_{i,0}+x_{i,1}z,~ i=0,1,\quad
& x_{:,j}(w)=x_{0,j}+x_{1,j}w,\quad j=-1,0,1,\\
 y_{i,:}(z)=y_{i,0}+y_{i,1}z,\quad i=-1,0,1,
&  y_{:,j}(w)=y_{-1,j}w^{-1}+y_{0,j}+y_{1,j}w,\quad j=0,1,  \\
a_{i,:}(z)=a_{i,-1}z^{-1}+a_{i,0}+a_{i,1}z,
&  a_{:,j}(w)=a_{-1,j}w^{-1}+a_{0,j}+a_{1,j}w,
~ i,j=-1,0,1.
\end{array}
\]
 For the sake of
notational simplicity, if not differently specified, we write $a_i(z)$
in place of $a_{i,:}(z)$.  Since $a_{i,j}$ are probabilities we have
$a_{i,j}\ge 0$, $\sum_{i,j}a_{i,j}=1$, that is,
$a_{-1}(1)+a_0(1)+a_1(1)=1$.  Similarly for $x_{i,j}$, $y_{i,j}$ and
$o_{i,j}$.
Moreover, we introduce the following notation
\begin{align*}
& d_1=
a_{1,:}(1)-a_{-1,:}(1),~~d_2=a_{:,1}(1)-a_{:,-1}(1),\\
& s_1=
y_{1,:}(1)-y_{-1,:}(1),~~s_2=x_{:,1}(1)-x_{:,-1}(1),\\
& r_1=d_2 x_{1,:}(1)-d_1 s_2,~~r_2=d_1 y_{:,1}(1)-d_2 s_1.
\end{align*}

The following result of  \cite[Theorem 1.2.1]{fayolle} and
\cite[Lemma 6.4]{miyazawa2011light} provides
 a necessary and sufficient condition for the positive
recurrence  of the random walk in terms of the values of
the probabilities $a_{i,j}$, $x_{i,j}$, $y_{i,j}$.

\begin{lemma}\label{lem:pr}
Assume that $(d_1,d_2)\ne (0,0)$.
  The DQBD process is positive
recurrent if and only if one of the following conditions holds:
\begin{enumerate}
\item $d_1<0$, $d_2<0$, $r_1 <0$, $r_2<0$;
\item $d_1\ge 0$, $d_2<0$, $r_2<0$, and $s_2<0$ for $x_{1,:}(1)=0$;
\item $d_1<0$, $d_2\ge 0$, $r_1<0$ and $s_1<0$ for $y_{:,1}(1)=0$.
\end{enumerate}
\end{lemma}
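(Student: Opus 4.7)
The plan is not to redo the ergodic-theoretic analysis from scratch but to observe that the lemma is a direct specialization of the classical Fayolle--Iasnogorodski--Malyshev positive recurrence theorem for homogeneous random walks in the quarter plane (together with its refinement by Miyazawa), and to verify that the DQBD setup fits their hypotheses. First I would note that the Markov chain with transition matrix $P$ of \eqref{eq:P} is, after column-wise ordering of the states, precisely a random walk on $\mathbb{Z}_+^2$ whose step distribution is spatially homogeneous within each of four regions---the interior, the two coordinate axes, and the origin---with one-step transitions governed respectively by $a_{i,j}$, $y_{i,j}$, $x_{i,j}$, and $o_{i,j}$. This matches exactly the model of \cite[\S 1.2]{fayolle} and the reflecting random walk studied in \cite{miyazawa2011light}.

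Next I would identify the drift quantities. The pair $(d_2,d_1)=(a_{:,1}(1)-a_{:,-1}(1),\,a_{1,:}(1)-a_{-1,:}(1))$ is the mean one-step displacement from an interior state, while $s_1$ and $s_2$ are the analogous drifts in the normal-to-axis direction from the $y$-axis and the $x$-axis respectively. The quantities $r_1 = d_2\,x_{1,:}(1)-d_1\,s_2$ and $r_2 = d_1\,y_{:,1}(1)-d_2\,s_1$ are then, up to positive factors, the drifts of the induced one-dimensional chain on each boundary, obtained after combining an excursion into the interior with a subsequent return to the axis. This is the classical \emph{Malyshev cone criterion}: the three allowed regimes in the lemma correspond to the three ways in which the mean drift vector $(d_1,d_2)$ together with the induced boundary drifts can all point towards the origin.

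Once this dictionary is fixed, the three cases of the statement are literally the three cases of \cite[Theorem 1.2.1]{fayolle}: case 1 treats the strictly inward interior drift so a quadratic Lyapunov function suffices provided the induced boundary drifts $r_1,r_2$ are also inward; cases 2 and 3 handle the degenerate settings in which the interior drift is inward in only one coordinate and positive recurrence must be sustained by a sufficiently strong reflection from the opposite boundary---with the additional conditions $s_2<0$ (resp.\ $s_1<0$) imposed precisely in the sub-case in which the relevant reflection term $x_{1,:}(1)$ (resp.\ $y_{:,1}(1)$) vanishes, so that the induced-drift formula for $r_i$ degenerates and one needs direct negativity of the surface drift. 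Miyazawa's Lemma~6.4 provides an alternative derivation that also covers the non-analytic boundary data used here.

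The main obstacle is purely notational bookkeeping: one must take care that our convention $a_{i,j}$, with $i$ the vertical and $j$ the horizontal increment, is translated correctly into the convention used in \cite{fayolle,miyazawa2011light}, and that the assumption $(d_1,d_2)\ne(0,0)$ excludes precisely the singular case in which both classical references require stronger moment or non-zero second-order drift hypotheses. Once the correspondence between symbols is written out, no additional calculation is required and the lemma follows.
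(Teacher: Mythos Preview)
Your proposal is correct and matches the paper's own treatment: the paper does not prove this lemma at all but simply states it as a quotation of \cite[Theorem~1.2.1]{fayolle} and \cite[Lemma~6.4]{miyazawa2011light}, which is exactly the reduction you outline. Your additional discussion of the drift interpretation and the notational translation is more detailed than anything the paper provides, but the underlying approach is the same.
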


 In the following, we will consider the
inequalities $A_{-1}e>A_1e$ or $A_{-1}e\ge A_1e>0$. For the structure
of the matrices $A_1$ and $A_{-1}$, this set of infinitely many
inequalities reduces just to a pair of inequalities. For instance, the
condition $ A_{-1}e>A_1e $ is equivalent to $a_{-1}(1)>a_1(1),~
y_{-1}(1)>y_1(1) $, while $ A_{-1}e\ge A_1e>0 $ is equivalent to
$a_{-1}(1)\ge a_1(1)>0,~ y_{-1}(1)\ge y_1(1)>0 $.  From the
probabilistic point of view, the above inequalities say that the
overall probability that the particle moves down is greater than the
overall probability that the particle moves up.
We observe that, according to Lemma \ref{lem:pr} if $A_{-1}e>A_1e$ and
$\widehat A_{-1}e>\widehat A_1e$, 
 then condition 1 holds. Moreover,
if the DQBD is positive recurrent, then at least one of the
conditions $a_{:,-1}(1)>a_{:,1}(1)$, $ a_{-1,:}(1)>a_{1,:}(1)$ is
satisfied.

Now, we are ready to prove the following result which gives sufficient
conditions for the stochasticity of $G$.

\begin{theorem}
If $A_{-1}e>A_1e$ the minimal nonnegative
solution $G$ of the matrix equation \eqref{eq:mateqG1} is stochastic,
i.e., $Ge=e$.
\end{theorem}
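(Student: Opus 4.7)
The plan is to reformulate the claim in terms of the residual $r := e - Ge \geq 0$ and prove that it vanishes via a contraction argument. Two preliminary facts are needed. First, since $P$ is row-stochastic, $(A_{-1}+A_0+A_1)e = e$. Second, the minimal nonnegative $G$ satisfies $Ge \leq e$: one checks by induction on the functional iteration $G_0 = 0$, $G_{k+1} = A_{-1} + A_0 G_k + A_1 G_k^2$ that $G_k e \leq e$, using the observation that $G_k e \leq e$ together with $G_k \geq 0$ implies $G_k^2 e = G_k(G_k e) \leq G_k e$. Passing to the limit yields $Ge \leq e$, so $0 \leq r \leq e$.

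Next I would derive a fixed-point equation for $r$. Subtracting \eqref{eq:mateqG1} applied at $e$ from $(A_{-1}+A_0+A_1)e = e$ gives $r = A_0(e-Ge) + A_1(e-G^2 e)$. Writing $e - G^2 e = (e - Ge) + G(e - Ge) = r + Gr$ turns this into
\[
  r = (A_0 + A_1 + A_1 G)\, r =: M r,
\]
so $r = M^k r$ for every $k \geq 0$.

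The final step is to show that $Me$ is bounded componentwise by a strict contraction factor of $e$. Since $Ge \leq e$ and $A_1 \geq 0$, we get $Me \leq A_0 e + 2 A_1 e = e - (A_{-1} - A_1)\,e$. Here the structural remark in the paragraph following Lemma~\ref{lem:pr} is essential: the quasi-Toeplitz form of $A_{\pm 1}$ collapses the infinitely many inequalities $A_{-1}e > A_1 e$ to the two scalar inequalities $a_{-1}(1) > a_1(1)$ and $y_{-1}(1) > y_1(1)$, yielding a uniform gap $(A_{-1}-A_1)\,e \geq \delta\, e$ with $\delta = \min\{a_{-1}(1)-a_1(1),\, y_{-1}(1)-y_1(1)\} > 0$. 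Therefore $Me \leq (1-\delta)\, e$, and since $M \geq 0$ and $r \leq \|r\|_\infty e$, iterating gives $r = M^k r \leq (1-\delta)^k \|r\|_\infty e$ for all $k \geq 0$, forcing $r = 0$, i.e., $Ge = e$.

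The delicate point I expect to be the main obstacle is exactly the passage from componentwise strict positivity $A_{-1}e > A_1 e$ to a \emph{uniform} lower bound $\delta > 0$: such a step would fail in a general infinite-dimensional setting, but it is rescued here by the highly restricted quasi-Toeplitz structure of the coefficients, which leaves only two distinct values of $(A_{-1}e - A_1 e)_i$ to control.
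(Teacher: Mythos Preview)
Your proof is correct and takes a genuinely different route from the paper's. The paper argues probabilistically: since $G$ depends only on $A_{-1},A_0,A_1$ and not on the boundary blocks $B_0,B_1$, one may choose the probabilities $x_{i,j}$ freely so that the full DQBD becomes positive recurrent (verified via Lemma~\ref{lem:pr}), and then quote \cite[Theorem~7.1.1]{lr:book} to conclude $Ge=e$. Your argument is instead purely analytic: derive $r=(A_0+A_1+A_1G)r$ and exploit the quasi-Toeplitz structure to turn $A_{-1}e>A_1e$ into a uniform gap $\delta>0$, yielding a contraction. Your approach is self-contained and avoids the appeal to an external probabilistic theorem; it is, in fact, the same mechanism the paper itself uses later in the proof of Theorem~\ref{th:G} (where $\gamma_k=\norm{A_0+A_1+A_1G_k}_\infty<1$ plays the role of your $1-\delta$), so your proof has the virtue of unifying the two results under one technique. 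The paper's route, on the other hand, makes transparent the link to positive recurrence and shows that stochasticity of $G$ is really a consequence of the underlying Markov-chain classification, which is conceptually informative even if less direct.
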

\begin{proof}
  Observe that $G$ is independent of the values $x_{i,j}$ defining
  $B_0$ and $B_1$. Therefore, it is sufficient to choose the
  probabilities $x_{i,j}$, $i=0,1$, $j=-1,0,1$ in such a way that the
  DQBD \eqref{eq:P} defined by the matrices $A_{-1},A_0,A_1$ and by
  the boundary conditions $B_0,B_1$ is positive recurrent.  In light
  of Theorem 7.1.1 of \cite{lr:book}, this implies that $Ge=e$. To
  this end, consider the DQBD \eqref{eq:P} defined by the matrices
  $A_{-1},A_0,A_1$ and by the boundary conditions $B_0,B_1$ to be
  suitably chosen. The assumption $A_{-1}e>A_1e$ implies that
  $d_1<0$. If $d_2\ge 0$, then we choose $x_{i,j}$ such that
  $r_1<0$. This way, in view of part 3 of Lemma \ref{lem:pr}, the DQBD
  is positive recurrent. On the other hand if $d_2<0$, since $s_1<0$,
  then $r_2<0$.  Concerning $r_1$, we choose $x_{i,j}$ such that
  $r_1<0$, so that, in view of part 1 of Lemma \ref{lem:pr}, the DQBD
  is positive recurrent.
\end{proof}

Consider the sequence $\{G_k\}_k$ defined by 
\begin{equation}\label{eq:3it}
\begin{aligned}
&G_0=0\\
&G_{k+1}=A_1G_k^2+A_0G_k+A_{-1},\quad k=0,1,\ldots.
\end{aligned}
\end{equation}
Since $A_{-1},A_0,A_1,G_0\in\QTD$ and since $\QTD$ is an algebra, then  
 all the matrices $G_k$ belong to $\QTD$ so that they can be written as
 $G_k=T(g_k)+E_k$. Moreover, from \eqref{eq:3it} it follows that $g_k(z)\in\mathcal W$ is a Laurent polynomial and $E_k$
 has a finite support. Observe also that, by construction, the symbols $g_k(z)$
 are such that
\begin{equation}\label{eq:gk}
g_{k+1}(z)=a_{-1}(z)+a_0(z)g_k(z)+a_1(z)g_k(z)^2,\quad g_0(z)=0.
\end{equation}
Equation \eqref{eq:gk} can be viewed as a functional relation between
Laurent polynomials in the variable $z$, and also as a point-wise
equation valid for any complex value $\zeta$ of the variable of $z$
such that $|\zeta|=1$.
It is well known \cite{lr:book} that $\{G_k\}_k$ is an increasing
sequence which converges point-wise to the minimal nonnegative
solution $G$ of the matrix equation \eqref{eq:mateqG1}.  Our aim is to
provide sufficient conditions under which the sequence $\{G_k\}_k$
converges in the infinity norm and the limit $G$ can be written in the
form $G=T(g)+E_g$.  We split this analysis into two parts: the
analysis of the sequence $g_k(z)$ and that of the correction $E_k$.

\subsection{A scalar equation}\label{sec:scaleq}
In this section we prove that the sequence $\{g_k(z)\}_k$ of Laurent
polynomials defined in \eqref{eq:gk} converges in the Wiener norm to a
fixed point $g(z)\in\mathcal W$ of \eqref{eq:gk}, we show that $g(z)$
has nonnegative coefficients, is such that $g(1)\le 1$ and for any
$z\in\mathbb C$ of modulus 1, $g(z)$ is the solution of minimum
modulus of the scalar equation $
a_1(z)\lambda^2+(a_0(z)-1)\lambda+a_{-1}(z)=0$.

We need the following notation. Given two functions
$a(z)=\sum_{i\in\mathbb Z}a_iz^i$, $b(z)=\sum_{i\in\mathbb Z}b_iz^i$,
$a(z),b(z)\in\mathcal W$ we write $a(z)\le_{cw}b(z)$ if the inequality
holds coefficient-wise, i.e., if $a_i\le b_i$ for $i\in\mathbb Z$. We
have the following result.

\begin{theorem}\label{th:g}
Under the assumption $a_{i,j}\ge 0$, $\sum_{i,j=-1}^1 a_{i,j}=1$,
there exists $g(z)\in\mathcal W$ such that $\lim_k\norm{g-g_k}_\w=0$,
where $g_k(z)$ is defined in \eqref{eq:gk}. Moreover $g(1)\le 1$,
$0\le_{cw}g_k(z)\le_{cw} g_{k+1}(z)\le_{cw}g(z)$ for $k=0,1,\ldots$,
and for any $\zeta$ such that $|\zeta|=1$, $g(\zeta)$ solves the
equation in $\lambda$
\begin{equation}\label{eq:g}
a_1(z)\lambda^2 +(a_0(z)-1)\lambda+a_{-1}(z)=0,
\end{equation}
for $z=\zeta$, and $|g(\zeta)|\le 1$. 
Moreover, $g(1)=1$ if and only if $a_{-1}(1)\ge
a_1(1)$; if $a_{-1}(1)<
a_1(1)$, then $g(1)=a_{-1}(1)/a_1(1)$.
\end{theorem}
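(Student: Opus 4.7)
The plan is to exploit the monotone structure inherited from the nonnegativity of the coefficients $a_{i,j}$. Since $a_{-1}(z),a_0(z),a_1(z)$ have nonnegative coefficients, the iteration map $F(u)=a_{-1}+a_0u+a_1u^2$ preserves, and is monotone in $\le_{cw}$ on, the cone of Laurent series with nonnegative coefficients. First I would induct on $k$, starting from $g_0=0\le_{cw}a_{-1}=g_1$, to obtain $0\le_{cw}g_k\le_{cw}g_{k+1}$ for every $k$.

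Next I would reduce the convergence question to a scalar one by evaluating at $z=1$. Because $g_k$ has nonnegative coefficients, $\norm{g_k}\w=g_k(1)=:\lambda_k$, and the scalars $\lambda_k$ satisfy
\[
\lambda_{k+1}=a_{-1}(1)+a_0(1)\lambda_k+a_1(1)\lambda_k^2,\qquad \lambda_0=0.
\]
The stochasticity $a_{-1}(1)+a_0(1)+a_1(1)=1$ lets me show inductively that $\lambda_k\le 1$, so $\{\lambda_k\}$ is monotone and bounded, and converges to some $\lambda^\star\in[0,1]$. Each Laurent coefficient of $g_k$ is monotone in $k$ and bounded, so the coefficient-wise limit $g(z)$ is well defined with nonnegative coefficients; monotone convergence applied to the sum of these coefficients gives $g(1)=\lim_k g_k(1)=\lambda^\star$, whence $g\in\mathcal W$ with $\norm{g}\w=g(1)=\lambda^\star\le 1$. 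The Wiener-norm convergence is then immediate: $\norm{g-g_k}\w=g(1)-g_k(1)\to 0$.

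With norm convergence in hand, I can pass to the limit in $g_{k+1}=a_{-1}+a_0g_k+a_1g_k^2$ using that $\mathcal W$ is a Banach algebra, obtaining $a_1g^2+(a_0-1)g+a_{-1}=0$ as an identity in $\mathcal W$. Evaluating at any $\zeta$ with $|\zeta|=1$ then yields \eqref{eq:g}, and the nonnegativity of the coefficients of $g$ gives $|g(\zeta)|\le g(1)\le 1$.

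The last step identifies $\lambda^\star=g(1)$. At $z=1$, using $a_0(1)-1=-a_{-1}(1)-a_1(1)$, the quadratic factors (when $a_1(1)\ne 0$) as $a_1(1)(\lambda-1)(\lambda-a_{-1}(1)/a_1(1))=0$. Since $\phi(\lambda):=a_{-1}(1)+a_0(1)\lambda+a_1(1)\lambda^2$ is nondecreasing and convex on $[0,\infty)$ with $\phi(0)\ge 0$, the iterates $\lambda_k$ converge monotonically to the smallest nonnegative fixed point of $\phi$. Hence, if $a_{-1}(1)\ge a_1(1)$ the root $a_{-1}(1)/a_1(1)\ge 1$ and $\lambda^\star=1$, while if $a_{-1}(1)<a_1(1)$ then $\lambda^\star=a_{-1}(1)/a_1(1)<1$. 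The degenerate case $a_1(1)=0$ collapses $\phi$ to an affine map whose unique fixed point in $[0,1]$ is $1$, consistent with $a_{-1}(1)\ge 0=a_1(1)$. The main subtlety I expect is precisely this last step---verifying that the monotone iteration always selects the minimal fixed point and handling the degenerate cases---but it follows from elementary considerations about $\phi$ restricted to $[0,1]$.
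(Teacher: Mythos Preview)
Your argument is correct and follows essentially the same route as the paper: coefficient-wise monotonicity by induction, reduction to the scalar sequence $g_k(1)$ to obtain Wiener-norm convergence (the paper phrases this as a Cauchy argument via $\norm{g_k-g_h}\w=g_k(1)-g_h(1)$ and completeness of $\mathcal W$, whereas you pass through the coefficient-wise limit and monotone convergence, but the two are equivalent), and then identification of $g(1)$ among the roots of the scalar quadratic at $z=1$. Your handling of the final step is in fact slightly more explicit than the paper's, since you spell out why the monotone iteration from $0$ selects the smallest nonnegative fixed point of $\phi$, which is precisely what is needed to conclude $g(1)=a_{-1}(1)/a_1(1)$ when $a_{-1}(1)<a_1(1)$.
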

\begin{proof}
  Let us prove by induction on $k$ that
  $0\le_{cw}g_k(z)\le_{cw}g_{k+1}(z)$ and that $g_k(1)\le
  g_{k+1}(1)\le 1$.  For $k=0$ we have $g_0(z)=0$ and
  $g_1(z)=a_{-1}(z)$ so that $0\le_{cw} g_0(z)\le_{cw} g_1(z)$,
  moreover $g_0(1)=0\le g_1(1)=a_{-1}(1)\le 1$.  For the inductive
  step, assume $0\le_{cw} g_{k-1}(z)\le_{cw} g_{k}(z)$, $g_{k-1}(1)\le
  g_k(1)\le 1$ and prove that $0\le_{cw} g_{k}(z)\le_{cw}g_{k+1}(z)$
  and $g_k(1)\le g_{k+1}(1)\le 1$.  Since $a_i(z)\ge_{cw}0$, by the
  inductive assumption we have
  $g_{k+1}(z)=a_{-1}(z)+a_0(z)g_k(z)+a_1(z)g_k(z)^2\ge_{cw}
  a_{-1}(z)+a_0(z)g_{k-1}(z)+a_1(z)g_{k-1}(z)^2=g_k(z)\ge_{cw}0$ and
  $g_{k+1}(1)= a_{-1}(1)+a_0(1)g_k(1)+a_1(1)g_k(1)^2\le
  a_{-1}(1)+a_0(1)+a_1(1)=1$, moreover $g_{k+1}(1)=
  a_{-1}(1)+a_0(1)g_k(1)+a_1(1)g_k(1)^2\ge
  a_{-1}(1)+a_0(1)g_{k-1}(1)+a_1(1)g_{k-1}(1)^2=g_k(1)$.  Now we prove
  that the sequence $\{g_k(z)\}_k$ is a Cauchy sequence in the norm
  $\norm{\cdot}_\w$. For $k>h$, since $g_k(z)\ge_{cw}g_h(z)\ge_{cw}0$
  we have
\begin{equation}\label{eq:tmp1}
  \norm{g_k-g_h}_\w=g_k(1)-g_h(1).
\end{equation}
Since the sequence $\{g_k(1)\}_k$ is nondecreasing and bounded from above,
then it converges, thus it is a Cauchy sequence so that, in view of
\eqref{eq:tmp1} also $\{g_k(z)\}_k$ is a Cauchy sequence in the norm
$\norm{\cdot}_\w$. Since $\mathcal W$ is a Banach algebra, then
$\{g_k(z)\}_k$ converges in norm to $g(z)\in\mathcal W$ and $g(1)\le 1$.
Finally, for any given $\zeta$ such that $|\zeta|=1$, we have
$g(\zeta)=\lim_k g_k(\zeta)$ so that, by a continuity argument and in
view of \eqref{eq:gk}, $g(\zeta)$ solves equation
\eqref{eq:g}. Moreover, since $g(z)\ge_{cw}0$,
 then $|g(z)|\le g(1)\le
1$ for $|z|=1$. If $\zeta=1$, the solutions of equation \eqref{eq:g}
are $1$ and $a_{-1}(1)/a_1(1)$ (if $a_1(1)\ne 0$). Since $g(1)\le 1$, then $g(1)=1$ 
if and only if $a_{-1}(1)\ge
a_1(1)$. Moreover, if $a_{-1}(1)<
a_1(1)$, then $g(1)=a_{-1}(1)/a_1(1)$.
\end{proof}

We prove that for any $\zeta$ of modulus 1, the value $g(\zeta)$ is
the solution of minimum modulus of the equation \eqref{eq:g} where
$g(z)$ is the function of Theorem \ref{th:g}. This  can be shown by
using the following result and Lemma \ref{cor:1}, which weaken the
assumptions of \cite[Theorem 5.1]{bmm}.

\begin{lemma}\label{th:2} Assume that there exists $i\in\{-1,0,1\}$
  such that $|a_i(z)|<a_i(1)$ for any $z\ne 1$ with $|z|=1$. Then for
  any $\zeta\ne 1$ with $|\zeta|=1$, equation \eqref{eq:g} has a
  solution of modulus less than 1 and a solution of modulus greater
  than 1.
\end{lemma}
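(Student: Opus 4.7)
The plan is to apply Rouché's theorem to the quadratic \eqref{eq:g} on the unit circle in the $\lambda$-plane. Fix $\zeta$ with $|\zeta|=1$ and $\zeta\ne 1$, and rewrite \eqref{eq:g} as $P(\lambda)=0$, where $P(\lambda):=-\lambda+Q(\lambda)$ with $Q(\lambda):=a_1(\zeta)\lambda^2+a_0(\zeta)\lambda+a_{-1}(\zeta)$. The goal is then to show that $P$ has exactly one zero in the open unit disk and another one strictly outside.

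The first step is a pointwise estimate on $|\lambda|=1$. Since the coefficients of each $a_i(z)$ are nonnegative and sum to $a_i(1)$, the triangle inequality yields $|a_i(\zeta)|\le a_i(1)$ for every $i\in\{-1,0,1\}$, and the hypothesis gives a strict inequality for at least one index. Using that the $a_{i,j}$ form a probability distribution, I obtain, for all $\lambda$ with $|\lambda|=1$,
\[
|Q(\lambda)|\le |a_{-1}(\zeta)|+|a_0(\zeta)|+|a_1(\zeta)|< a_{-1}(1)+a_0(1)+a_1(1)=1=|{-\lambda}|.
\]

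The second step is to feed this strict bound into Rouché's theorem applied to the decomposition $P=-\lambda+Q$: it follows that $P$ has the same number of zeros in the open unit disk as $-\lambda$, namely exactly one, counted with multiplicity. The same strict inequality also forces $|P(\lambda)|\ge |\lambda|-|Q(\lambda)|>0$ for $|\lambda|=1$, so $P$ has no zero on the unit circle. When $a_1(\zeta)\ne 0$, $P$ is a genuine quadratic with exactly two roots in $\mathbb{C}$; one lies strictly inside the disk by Rouché, and since neither lies on the boundary the other must lie strictly outside, yielding the conclusion.

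The step I expect to be the actual obstacle is not the Rouché argument itself, which is routine once the bound on $|Q|$ is in hand, but rather the corner case $a_1(\zeta)=0$, which is not formally ruled out by the hypothesis (the strict inequality might hold only at $i=-1$ or $i=0$). In that degenerate situation $P$ is linear and has a single root in $\mathbb{C}$; the natural way to preserve the statement of the lemma is to interpret the missing root as located at infinity, which trivially has modulus greater than $1$. Alternatively, this case can be dispatched by a perturbation argument, since the upcoming use of the lemma only requires the root of minimum modulus to exist and lie strictly inside the open disk, which the Rouché step already guarantees.
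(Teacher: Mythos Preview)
Your proof is correct and follows essentially the same Rouch\'e-theorem strategy as the paper. The only cosmetic difference is in the splitting: the paper compares $f(x)=x(1-a_0(\zeta))$ against $g(x)=a_1(\zeta)x^2+a_{-1}(\zeta)$, whereas you compare $-\lambda$ against $Q(\lambda)=a_1(\zeta)\lambda^2+a_0(\zeta)\lambda+a_{-1}(\zeta)$; your grouping is slightly cleaner since the bound $|Q(\lambda)|<1$ follows in one line, and you are also more explicit than the paper about the degenerate case $a_1(\zeta)=0$.
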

\begin{proof}
Let us prove that for any $\zeta\ne 1$ such that $|\zeta|=1$ there are
no solutions $\lambda$ of \eqref{eq:g} of modulus 1. By contradiction,
if $|\lambda|=1$ 
 then
$1=|\lambda|=|a_{-1}(\zeta)+a_0(\zeta)\lambda+a_1(\zeta) \lambda^2|\le
|a_{-1}(\zeta)|+|a_0(\zeta)|+|a_1(\zeta)|<|a_{-1}(1)|+|a_0(1)|+|a_1(1)|=1$
which is a contradiction. Now, define $f(x)=x(1-a_0(\zeta))$ and
$g(x)=x^2a_1(\zeta)+a_{-1}(\zeta)$ and observe that for $|x|=1$
 \begin{align*}
 |f(x)|&=|1-a_0(\zeta)|\ge 1-|a_0(\zeta)|\ge 1-a_0(1)=a_{-1}(1)+a_1(1),\\
 |g(x)|&\le |a_1(\zeta)|+|a_{-1}(\zeta)|\le a_1(1)+a_{-1}(1).
 \end{align*}
 Therefore, 
  $|f(x)|\ge |g(x)|$, moreover, the inequality is strict in
 view of the assumption $|a_i(z)|<a_i(1)$ for at least an index $i$.
 By applying Rouch\'e theorem \cite[Theorem 4.10b]{henrici:book} , it
 follows that $f(x)$ and
 $f(x)+g(x)=x^2a_1(\zeta)+(a_0(\zeta)-1)x+a_{-1}(\zeta)$ have the same
 number of roots in the open unit circle. On the other hand the
 function $f(x)$ has the only root $x=0$ since $1-a_0(\zeta)\ne 0$ for
 any $\zeta\ne 1$, $|\zeta|=1$.
\end{proof}

\begin{remark}\label{rem:alpha}
Observe that the condition $|a_i(z)|<a_i(1)$ can be equivalently
rewritten as $a_{i,j}= 0$ for at most one value of $j$ so that the
cases not covered by the above theorem are the ones where
$a_i(z)=\alpha_i z^{k_i}$ 
for $k_i\in\{-1,0,1\}$ and
$\alpha_{-1},\alpha_0,\alpha_1\ge 0$,
$\alpha_{-1}+\alpha_0+\alpha_1=1$. For instance, if $\alpha_i=1/3$ and
$k_i=i$, $i=-1,0,1$, then the quadratic equation has the double
solution $\lambda=\zeta^{-1}$ of modulus 1.
\end{remark}
  
The following result characterizes the case where equation
\eqref{eq:g} has two solutions with the same modulus.

\begin{lemma}\label{cor:1}
  Assume that $a_{i,j}\ge 0$, $\sum_{i,j=-1}^1 a_{i,j}=1$, and
  $a_1(z)\not\equiv 0$.  If for a given $\zeta$, $|\zeta|=1$, equation
  \eqref{eq:g} has two solutions $\lambda_1$, $\lambda_2$ such that
  $|\lambda_1|=|\lambda_2|$, 
  then there exists $k\in\{-1,0,1\}$ such
  that $\lambda_1=\lambda_2=\zeta^k$.
\end{lemma}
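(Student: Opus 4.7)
The plan is to combine Vieta's formulas with a sharp chain of triangle and AM--GM inequalities tailored to the probabilistic structure $a_{i,j}\ge 0$, $\sum_{i,j}a_{i,j}=1$; tightness throughout will force $\lambda_1=\lambda_2$, and then the double root will be identified as a power of $\zeta$.

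First I would check that $a_1(\zeta)\ne 0$: otherwise, since $|a_0(\zeta)|\le a_0(1)\le 1$, the only way a degenerate linear equation can have two solutions is $a_0(1)=1$, which forces $a_{\pm 1}\equiv 0$, contradicting $a_1\not\equiv 0$. Thus Vieta applies, giving $\lambda_1+\lambda_2=(1-a_0(\zeta))/a_1(\zeta)$ and $\lambda_1\lambda_2=a_{-1}(\zeta)/a_1(\zeta)$. Under $|\lambda_1|=|\lambda_2|=r$, the bound $|\lambda_1+\lambda_2|\le 2r$ together with $r^2=|a_{-1}(\zeta)|/|a_1(\zeta)|$ gives
\[
|1-a_0(\zeta)|^2 \le 4|a_{-1}(\zeta)||a_1(\zeta)|.
\]
In the opposite direction, using $|a_i(\zeta)|\le a_i(1)$, $|a_0(\zeta)|\le 1$, the normalization $a_{-1}(1)+a_0(1)+a_1(1)=1$, and AM--GM,
\[
|1-a_0(\zeta)|\ge 1-|a_0(\zeta)|\ge a_{-1}(1)+a_1(1)\ge 2\sqrt{a_{-1}(1)a_1(1)}\ge 2\sqrt{|a_{-1}(\zeta)||a_1(\zeta)|}.
\]
Comparing the two chains forces equality throughout, which I would exploit in turn: AM--GM equality yields $a_{-1}(1)=a_1(1)$; saturation of the modulus bounds yields $|a_{\pm 1}(\zeta)|=a_{\pm 1}(1)$ and $|a_0(\zeta)|=a_0(1)$; tightness of $|1-a_0(\zeta)|=1-|a_0(\zeta)|$ forces $a_0(\zeta)$ to be real nonnegative, hence $a_0(\zeta)=a_0(1)$; and finally $|\lambda_1+\lambda_2|=2r$ combined with $|\lambda_1|=|\lambda_2|$ yields $\lambda_1=\lambda_2$ by the equality case of the triangle inequality, a double root of modulus $r=\sqrt{a_{-1}(1)/a_1(1)}=1$.

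It remains to identify this double root with a power of $\zeta$. From Vieta, $2\lambda_1=(1-a_0(\zeta))/a_1(\zeta)=2a_1(1)/a_1(\zeta)$, whence $\lambda_1=a_1(1)/a_1(\zeta)$. The equality $|a_1(\zeta)|=a_1(1)=\sum_j a_{1,j}$ means that all nonzero terms $a_{1,j}\zeta^j$ share a common argument; since $a_1\not\equiv 0$, picking any $k_1$ in its support gives $a_1(\zeta)=a_1(1)\zeta^{k_1}$, and consequently $\lambda_1=\zeta^{-k_1}$ with $-k_1\in\{-1,0,1\}$, which is the claim (this also covers $\zeta=1$ transparently). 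The main delicacy is bookkeeping: one must verify simultaneously each link of the long inequality chain is tight and track what each saturation contributes; once that is done, the final identification of $\lambda_1$ as $\zeta^k$ is a one-line computation.
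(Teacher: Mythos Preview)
Your argument is correct and takes a genuinely different route from the paper's. The paper proceeds by a perturbation/continuity argument: it nudges one coefficient so that the hypothesis of Lemma~\ref{th:2} (a Rouch\'e-type separation of the two roots across the unit circle) is satisfied, lets the perturbation go to zero to obtain $|\lambda_1|\le 1\le|\lambda_2|$, hence $|\lambda_1|=|\lambda_2|=1$, and then splits into cases according to whether each $a_i(z)$ is a monomial (Remark~\ref{rem:alpha}), finishing with an explicit Vieta computation in the monomial case. Your proof instead extracts everything from a single tight chain of triangle and AM--GM inequalities applied directly to Vieta's relations, with no appeal to Rouch\'e's theorem, continuity of roots, or Lemma~\ref{th:2}. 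What your approach buys is self-containment and brevity: the monomial structure on $a_1$ (the only one actually needed to identify the double root as $\zeta^{-k_1}$) emerges automatically from the equality case $|a_1(\zeta)|=a_1(1)$, so no case split is required. What the paper's route buys is that the intermediate inequality $|\lambda_1|\le 1\le|\lambda_2|$ is made explicit from the outset, which conceptually connects the lemma to the surrounding discussion of minimal-modulus solutions; in your argument the fact that $r=1$ only appears at the end, as a consequence of $a_{-1}(1)=a_1(1)$.
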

\begin{proof}
We use a continuity argument. Since $a_1(z)\not\equiv 0$, we assume
for simplicity that $a_{1,1}\ne 0$. Choose $0<\epsilon< a_{1,1}$
replace $a_{1,1}$ with $a_{1,1}-\epsilon$ and replace $a_{1,-1}$ with
$a_{1,-1}+\epsilon$. The new values of $a_{i,j}$ satisfy the
assumption of Lemma \ref{th:2}. Therefore, 
 there exist two solutions
$\lambda_1(\epsilon)$, $\lambda_2(\epsilon)$ such that
$|\lambda_1(\epsilon)|<1<|\lambda_2(\epsilon)|$ By letting
$\epsilon\to 0$ and setting $\lambda_i:=\lim_{\epsilon\to
  0}\lambda_i(\epsilon)$, then by continuity $|\lambda_1|\le 1\le
|\lambda_2|$, so that
$\lambda_1$
is still a, possibly non-unique, solution of minimum modulus
of \eqref{eq:g}. On the other hand if $|\lambda_1|=|\lambda_2|$, 
then necessarily $|\lambda_1|=|\lambda_2|=1$. If the assumption
of Lemma \ref{th:2} are satisfied, 
 then $\zeta=1$ and
$\lambda_1=1=\lambda_2$. If not, in view of Remark \ref{rem:alpha},
there exist $\alpha_i\ge 0$, $k_i\in\{-1,0,1\}$, $i=-1,0,1$, such that
$\alpha_{-1}+\alpha_0+\alpha_1=1$ and
$a_i(z)=\alpha_iz^{k_i}$, $i=-1,0,1$.
On the other hand since $|\lambda_1|=|\lambda_2|=1$, and
$\lambda_1\lambda_2=(\alpha_{-1}\zeta^{k_{-1}})/(\alpha_1 \zeta^{k_1})$,
then $\alpha_{-1}=\alpha_1$ so
that $\alpha_0=1-2\alpha_1$, $\alpha_1\le 1/2$. Thus,
$\lambda_1,\lambda_2$ solve the equation
$
\zeta^{k_1}\lambda^2+(\alpha_0\zeta^{k_0}-1)/\alpha_1\lambda+\zeta^{k_{-1}}=0$.
Since $|\lambda_1|=|\lambda_2|=1$,
 then
$|(\alpha_0\zeta^{k_0}-1)/\alpha_1|=|\lambda_1+\lambda_2|\le 2$, that
is, $|\alpha_0 \zeta^{k_0}-1|\le 1-\alpha_0$. Setting
$\zeta^{k_0}=\cos\theta +\cu \sin\theta$ the latter inequality turns
into $\alpha_0\le\alpha_0\cos\theta$.  This is possible if and only if
$\theta =0$ or $\alpha_0=0$.  In the former case we have either
$\zeta=1$ or $k_0=0$. If $\zeta=1$, 
 then $\lambda_1=\lambda_2=1$. If
$k_0=0$, 
 then $\lambda_1$ and $\lambda_2$ solve the equation $
\zeta^{k_1}\lambda^2+(\alpha_0-1)/\alpha_1\lambda +\zeta^{k_{-1}}=0$
that is $ \zeta^{k_1}\lambda^2-2\lambda +\zeta^{k_{-1}}=0$. The sum of
the solutions is $\lambda_1+\lambda_2=2/\zeta^{k_1}$ so that
$|\lambda_1+\lambda_2|=2$. Thus, necessarily we have
$\lambda_1=\lambda_2=\zeta^{-k_1}$.  In the remaining case
$\alpha_0=0$, we deduce that $\alpha_1=\alpha_{-1}=1/2$, so that the
quadratic equation is $\zeta^{k_1}\lambda^2 -2\lambda
+\zeta^{k_{-1}}=0$ and the same analysis applies.
\end{proof}

We may conclude with the following

\begin{theorem}\label{th:3.5}
  If $a_{i,j}\ge 0$ for $i,j = -1,0,1$, and $\sum_{i,j=-1}^1 a_{i,j}=1$, then
  for any $\zeta$ such that $|\zeta|=1$, the value $\theta=\lim_k
  g_k(\zeta)$ is the solution of minimum modulus of \eqref{eq:g}.
  Moreover, $\theta=g(\zeta)$ where $g$ is the function defined in
  Theorem \ref{th:g}.
\end{theorem}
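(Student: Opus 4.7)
My plan is to layer three already-established facts. From Theorem \ref{th:g} we know that the sequence $g_k(\zeta)$ converges to $g(\zeta)$ (convergence in the Wiener norm implies pointwise convergence on the unit circle), that $g(\zeta)$ satisfies \eqref{eq:g} at $z=\zeta$, and that $|g(\zeta)|\le g(1)\le 1$ because $g(z)$ has nonnegative coefficients. Thus the substance of the theorem reduces to verifying that, among the (at most) two roots of \eqref{eq:g}, the value $\theta:=g(\zeta)$ is one of minimum modulus.

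First I would dispose of the easy cases. If $a_1(z)\equiv 0$, equation \eqref{eq:g} is linear and its unique root is trivially of minimum modulus. If $\zeta=1$, using $a_{-1}(1)+a_0(1)+a_1(1)=1$ the quadratic factors as $(\lambda-1)\bigl(a_1(1)\lambda-a_{-1}(1)\bigr)=0$, with roots $1$ and $a_{-1}(1)/a_1(1)$; Theorem \ref{th:g} pinpoints $g(1)$ to be exactly the smaller of these in each regime ($a_{-1}(1)\ge a_1(1)$ versus $a_{-1}(1)<a_1(1)$), which is precisely the claim.

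For $\zeta\ne 1$ with $a_1(z)\not\equiv 0$ I would split according to whether the hypothesis of Lemma \ref{th:2} holds. If it does, that lemma yields one root $\lambda_1$ with $|\lambda_1|<1$ and one root $\lambda_2$ with $|\lambda_2|>1$; since $\theta$ is a root and $|\theta|\le 1$, it can only be $\lambda_1$. If the hypothesis fails, then by Remark \ref{rem:alpha} each $a_i(z)$ is a monomial $\alpha_i z^{k_i}$. Here I would invoke Lemma \ref{cor:1}: either the two roots share modulus, and Lemma \ref{cor:1} forces them both to equal $\zeta^k$ for some $k\in\{-1,0,1\}$, so $\theta$ has minimum modulus trivially; or they differ in modulus, in which case the perturbation argument used inside the proof of Lemma \ref{cor:1} already establishes $|\lambda_1|\le 1\le |\lambda_2|$ with strict separation, and once again the bound $|\theta|\le 1$ pins down $\theta=\lambda_1$.

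The only delicate point I expect to grapple with is the boundary situation in which a root sits exactly on the unit circle, so that the bare bound $|\theta|\le 1$ is not sharp enough to distinguish it from a companion root of the same modulus. This is precisely the ambiguity that Lemma \ref{cor:1} is designed to dispel, by showing that equal-modulus roots on $|\lambda|=1$ must in fact coincide; once that is in hand, the packaging above is essentially routine.
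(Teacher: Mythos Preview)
Your approach mirrors the paper's: dispose of $a_1\equiv 0$, then combine the perturbation step embedded in the proof of Lemma~\ref{cor:1} (giving $|\lambda_1|\le 1\le|\lambda_2|$) with the bound $|g(\zeta)|\le 1$ from Theorem~\ref{th:g}. Your write-up is considerably more explicit than the paper's two-sentence argument.

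There is, however, one small gap in the monomial branch. The perturbation argument yields only $|\lambda_1|\le 1\le|\lambda_2|$, not the ``strict separation'' you assert; the sub-case $|\lambda_1|<1=|\lambda_2|$ genuinely occurs (take all $a_i$ constant with $a_{-1}(1)<a_1(1)$: the roots are $a_{-1}(1)/a_1(1)$ and $1$ for every $\zeta$), and here the bare bound $|\theta|\le 1$ does not exclude $\theta=\lambda_2$. Lemma~\ref{cor:1} is silent because the moduli differ. The repair is to use the sharper inequality $|\theta|\le g(1)$ already supplied by Theorem~\ref{th:g}: in the monomial case $|\lambda_1\lambda_2|=a_{-1}(1)/a_1(1)$, so $|\lambda_2|=1$ together with $|\lambda_1|<1$ forces $a_{-1}(1)<a_1(1)$, whence the last clause of Theorem~\ref{th:g} gives $g(1)=a_{-1}(1)/a_1(1)<1$, and then $|\theta|\le g(1)<1=|\lambda_2|$ pins down $\theta=\lambda_1$ after all.
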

\begin{proof}
  In the case where $a_1(z)\equiv 0$ the equation has only one
  solution which is the one of minimum modulus.  If $a_1(z)\not\equiv
  0$ Lemma \ref{cor:1} guarantees the existence of the minimal
  solution of \eqref{eq:g}. The claim follows from Theorem
  \ref{th:g}. Since $g_k(z)$ converges in the Wiener norm to $g(z)$,
  then $\lim_k g_k(\zeta)=g(\zeta)$.
\end{proof}

We will refer to the function $g(z)$ as to the minimal solution of
\eqref{eq:g}.

\subsection{Conditions for the compactness of $E_g$} 
In view of the  results of the previous section, under the only assumption
 $a_{i,j}\ge 0$ for $i,j = -1,0,1$ and  $\sum_{i,j=-1}^1 a_{i,j}=1$,
we may write 
\begin{equation}\label{eq:T+C}
G=T(g)+E_g
\end{equation}
 where $E_g:=G-T(g)$, and
 $\norm{E_g}_\infty\le\norm{G}_\infty+\norm{T(g)}_\infty\le 1+g(1)\le
 2$ so that $E_g\in \Endo{\ell^\infty}$, moreover we have $|E_g|e\le
 Ge+T(g)e\le 2e$. If $G\in \Endo{\ell^p}$, 
 then $\norm{E_g}_p\le
 \norm{T(g)}_p+\norm{G}_p\le\norm{g}_\w+\norm{G}_p$. We may synthesize
 this property in the following.

\begin{theorem}
The minimal nonnegative solution $G$ of the matrix equation in
\eqref{eq:mateqG1} can be written as $G=T(g)+E_g$ where
$g(z)\in\mathcal W$ is such that $g(1)\le 1$ and $g(z)$ is the
solution of minimum modulus of equation \eqref{eq:g}. Moreover,
$E_g\in \Endo{\ell^\infty}$ is such that $\norm{E_g}_\infty\le 1+g(1)$ and
$|E_g|e\le 2e$. Finally, if $G\in \Endo{\ell^p}$, 
 then $E_g\in \Endo{\ell^p}$.
\end{theorem}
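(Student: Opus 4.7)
The plan is to package together results established in the preceding two subsections; the theorem is essentially a summary statement, so no genuinely new arguments are needed. The existence of $g(z)\in\mathcal W$ with $g(1)\le 1$ and nonnegative coefficients is delivered by Theorem \ref{th:g}, while the characterization of $g(\zeta)$ as the solution of minimum modulus of \eqref{eq:g} for every $\zeta$ with $|\zeta|=1$ is exactly Theorem \ref{th:3.5}. These two results together handle the first assertions of the theorem without further work.

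For the bounds on $E_g := G - T(g)$, I would first observe that the transition matrix $P$ in \eqref{eq:P} is stochastic, so that $A_{-1}e + A_0 e + A_1 e \le e$; by the minimality of $G$ among nonnegative solutions, one obtains $Ge \le e$ and hence $\|G\|_\infty \le 1$. Since $g$ has nonnegative coefficients, $\|T(g)\|_\infty = \sum_{i\in\mathbb Z} |g_i| = g(1)$. The triangle inequality then yields
\[
\|E_g\|_\infty \le \|G\|_\infty + \|T(g)\|_\infty \le 1 + g(1).
\]
For the componentwise bound, I would note that $G$ and $T(g)$ are both nonnegative matrices, so $|E_g| \le G + T(g)$ entrywise; applying this to the vector $e$ gives $|E_g|e \le Ge + T(g)e \le e + g(1)\,e \le 2e$, which is the claimed pointwise bound.

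For the final $\ell^p$ assertion, I would invoke Theorem \ref{thm1} to conclude $\|T(g)\|_p \le \|g\|\w < \infty$, so that $T(g) \in \Endo{\ell^p}$. Under the hypothesis $G \in \Endo{\ell^p}$, the identity $E_g = G - T(g)$ and the triangle inequality give $\|E_g\|_p \le \|G\|_p + \|g\|\w$, so $E_g \in \Endo{\ell^p}$. There is no real obstacle in carrying this out: every ingredient (Wiener-norm convergence of $g_k$ to $g$, nonnegativity of the coefficients of $g$, substochasticity of $G$, and the standard bound $\|T(g)\|_p \le \|g\|\w$) has already been established, and the proof reduces to applying these facts to the decomposition $G = T(g) + (G - T(g))$.
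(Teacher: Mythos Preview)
Your proposal is correct and mirrors the paper's own argument, which is contained in the paragraph immediately preceding the theorem: define $E_g:=G-T(g)$, invoke Theorems~\ref{th:g} and~\ref{th:3.5} for the properties of $g$, and use the triangle inequality together with $\norm{G}_\infty\le 1$, $\norm{T(g)}_\infty\le\norm{g}\w=g(1)$, and $|E_g|\le G+T(g)$ to obtain the stated bounds. The only cosmetic difference is that you spell out the justification for $Ge\le e$ via minimality, which the paper simply takes as known.
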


Now, we are ready to  provide conditions under which $G$ belongs to
$\QTD$ or to $\mathcal{EQT}$. 

In \cite{lr:book} it is proven that the sequence $G_k$ generated by
\eqref{eq:3it} converges monotonically and point-wise to $G$. In
general, monotonic point-wise convergence does not imply convergence
in norm, as shown in the following example.  Let
$v^{(k)}:=(v^{(k)}_i)_{i \in \mathbb Z^+}$, where $v^{(k)}_i=\frac 1{(k+1)^i}$ for $k\ge
1$. It holds $v^{(k)}\in\ell^1$, $\lim_kv^{(k)}_i=0$ monotonically
but $\norm{v^{(k)}}_1=\frac k{k-1}$ so 
$\lim_k\norm{v^{(k)}}_1=1$. The example can be adjusted to the $p$
norm and extended to the case of matrices. In fact, the sequence
$A_k=v^{(k)}e^T$ is a sequence of compact operators in $\Endo{\ell^1}$
such that $\lim_k A_k=0$ where convergence is point-wise and
monotonic, but $\lim_k\norm{A_k}_1=1$.

Under the assumption $A_{-1}e>A_1e$, it is shown in
\cite[Theorem 4.2]{bmmj} that the sequence $\{G_k\}_k$ generated by \eqref{eq:3it}
converges in the infinity norm to $G$.  The following result slightly
weakens the assumptions and is the basis to prove that in this case
$G\in\QTD$.

\begin{theorem}\label{th:G}
If $A_{-1}e>A_1e$, or if $A_{-1}e\ge A_1e>0$, then for the sequence
$G_k$ generated by \eqref{eq:3it} we have
$\lim_k\norm{G_k-G}_\infty=0$.
\end{theorem}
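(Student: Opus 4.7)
The plan is to reduce the uniform convergence claim to a monotone pointwise convergence problem on a compactification of $\mathbb N$ and then invoke Dini's theorem. I set $r_k := (G - G_k) e$. Since \eqref{eq:3it} uses only nonnegative matrices and starts from $G_0 = 0$, the classical theory (\cite{lr:book}) guarantees that $\{G_k\}$ is componentwise nondecreasing and converges pointwise to the minimal nonnegative solution $G$. In particular $r_k \geq 0$ decreases monotonically to $0$ at each fixed index, and the nonnegativity of $G - G_k$ yields $\|G - G_k\|_\infty = \|r_k\|_\infty$. The goal is therefore to show $\sup_i r_k(i) \to 0$.

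First I would exploit the $\QTD$-algebra structure (Theorem~\ref{thm:banach}): an easy induction shows $G_k = T(g_k) + E_k \in \QTD$, with $g_k$ the Laurent polynomial produced by the scalar iteration \eqref{eq:gk} and $E_k$ of finite support. A direct calculation gives $(T(g_k)e)_i = \sum_{m \geq 1-i} (g_k)_m \to g_k(1)$, and the decay of $E_k$ gives $(E_k e)_i \to 0$, so $(G_k e)_i \to g_k(1)$ as $i \to \infty$. A parallel induction shows $G_k e \leq e$ for every $k$, hence $G e \leq e$.

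The key ingredient is that under either hypothesis one has $a_{-1}(1) \geq a_1(1)$, so Theorem~\ref{th:g} yields $g(1) = 1$ and $g_k(1) \uparrow 1$. Combined with the lower bound $(G e)_i \geq (G_K e)_i$ for every fixed $K$ and the already-established $(G_K e)_i \to g_K(1)$, this implies $\liminf_i (G e)_i \geq g_K(1)$ for every $K$, and hence $(G e)_i \to 1$. Consequently, on the one-point compactification $\mathbb N \cup \{\infty\}$, both $G e$ and every $G_k e$ extend to continuous functions, and so does $r_k$, with $r_k(\infty) = 1 - g_k(1)$; the sequence $\{r_k\}$ is thus a monotonically decreasing family of continuous functions converging pointwise to the continuous function $0$.

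Finally, since $\mathbb N \cup \{\infty\}$ is compact Hausdorff, Dini's theorem upgrades this pointwise monotone convergence to uniform convergence, yielding $\|r_k\|_\infty \to 0$ and hence the claim. The main subtlety will be establishing the asymptotic value $(G e)_i \to 1$ without knowing \emph{a priori} that $G \in \QTD$; the sandwich between the $G_K$ (whose row sums are controlled via Theorem~\ref{th:g}) and the trivial upper bound $G e \leq e$ is what makes this work. A pleasant feature of this approach is that both hypotheses of the theorem are handled simultaneously, without any need to distinguish the strict from the non-strict case between $A_{-1} e$ and $A_1 e$.
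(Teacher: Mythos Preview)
Your argument is correct and takes a genuinely different route from the paper's.  The paper proceeds by a contraction estimate: from the identity $\mathcal E_{k+1}=A_0\mathcal E_k+A_1(\mathcal E_kG+G_k\mathcal E_k)$ with $\mathcal E_k=G-G_k$ it obtains $v_{k+1}\le (A_0+A_1+A_1G_k)v_k$ for $v_k=\mathcal E_ke$, and then shows that $\gamma_k:=\norm{(A_0+A_1+A_1G_k)e}_\infty<1$ by verifying that $w_k=(A_{-1}-A_1G_k)e$ has a uniform positive lower bound.  Establishing this lower bound is precisely where the two hypotheses are used separately, and where the boundary conditions $y_{-1}(1)>y_1(1)$ (resp.\ $y_{-1}(1)\ge y_1(1)>0$) enter.

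Your Dini argument sidesteps this quantitative step entirely.  By proving $(G_ke)_i\to g_k(1)$ from the finite support of $E_k$ and then sandwiching $(Ge)_i$ between $g_K(1)$ and $1$, you obtain continuity of each $r_k$ on $\mathbb N\cup\{\infty\}$ with $r_k(\infty)=1-g_k(1)$; monotone pointwise convergence to $0$ follows from $g_k(1)\uparrow g(1)=1$, and Dini finishes.  Two remarks on what each approach buys.  First, your argument only consumes the scalar consequence $a_{-1}(1)\ge a_1(1)$ of the hypotheses (via Theorem~\ref{th:g}), so it in fact proves the theorem under that weaker assumption alone, with no conditions on $y_{\pm1}(1)$, and handles both cases uniformly; the paper reaches a comparably general statement only later, in Theorem~\ref{th:4.13}, via the auxiliary function $\psi$.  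Second, the paper's contraction argument yields an explicit (essentially geometric) rate of convergence $\norm{v_{k+1}}_\infty\le\gamma_k\norm{v_k}_\infty$, which Dini's theorem does not provide.
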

\begin{proof}
Subtracting the equation $G_{k+1}=A_{-1}+A_0G_k+A_1G_k^2$ from the
equation $G=A_{-1}+A_0G+A_1G^2$ and setting $\mathcal E_k=G-G_k$, we
get
$
\mathcal E_{k+1}=A_0\mathcal E_k+A_1(\mathcal E_kG+G_k\mathcal E_k)$.
By proceeding similarly to the proof of Theorem 4.2 of \cite{bmmj}, we
may show that $\mathcal E_k\ge 0$, so that $\norm{\mathcal E_k}_\infty
=\norm{v_k}_\infty$ where $v_k=\mathcal E_ke$. Thus,
$
v_{k+1}=A_0v_k+A_1(\mathcal E_kGe+G_kv_k)\le (A_0+A_1+A_1G_k)v_k,
$
where we have used the property $Ge\le e$.  Whence we get
$\norm{v_{k+1}}_\infty\le \norm{v_k}_\infty\gamma_k$, for
$\gamma_k=\norm{A_0+A_{1}+A_1G_k}_\infty$.  On the other hand, since
$0\le (A_0+A_1+A_1G_k)e=(I-(A_{-1}-A_1G_k))e$, where we used the
identity $e=(A_{-1}+A_0+A_1)e$, and since
$\norm{A_0+A_1+A_1G_k}_\infty=\norm{(A_0+A_1+A_1G_k)e}_\infty$, we have
$\gamma_k=\norm{(I-(A_{-1}-A_1G_k))e}_\infty$. Therefore, 
 $\gamma_k<1$ if
and only if the vector $w_k:=(A_{-1}-A_1G_k)e$ has positive components
which do not decay to zero. Since $G_k$ has finite support, the vector
$A_1G_ke$ has finite support so that the condition $a_{-1}(1)\ne 0$
implies that the components of $w_k$ do not decay to zero. Thus, it is
enough to prove that $w_k>0$.  Since $G\ge G_k$, then $Ge\ge G_ke$ so
that $(A_{-1}-A_1G_k)e\ge (A_{-1}-A_1)e$. Whence the condition
$(A_{-1}-A_1)e>0$ implies that the vector $w_k$ has positive
components.  In the case where $(A_{-1}-A_1)e\ge 0$ and $A_1e>0$, we
may prove by induction that $G_ke<e$. In fact, for $k=0$ the property
holds since $G_0=0$.  For the implication $k\to k+1$ we have
$
G_{k+1}e=(A_{-1}+A_0G_k+A_1G_k^2)e\le (A_{-1}+A_0+A_1G_k)e<(A_{-1}+A_0+A_1)e=e, 
$
where we used the fact that $A_1G_ke<A_1e$ since $G_ke<e$ and $A_1$
has at least a nonzero entry in each row since by assumption $A_1e>0$.
From the property $G_ke<e$ we get $A_1G_ke<A_1e$ so that $
w_k=(A_{-1}-A_1G_k)e>(A_{-1}-A_1)e\ge 0$.
\end{proof}

\begin{remark}\label{rem:1}\rm Recall that the condition $A_{-1}e>A_1e$
  implies that $a_{-1}(1)>a_1(1)$ while 
the condition $A_{-1}e\ge A_1e$ implies that $a_{-1}(1)\ge a_1(1)$. In
both cases the quadratic equation
$a_1(1)\lambda^2+(a_0(1)-1)\lambda+a_{-1}(1)=0$ has two real solutions
$\lambda_1=1$ and $\lambda_2=a_{-1}(1)/a_{1}(1)$.  Moreover
$\lambda_1=1$ is the minimal solution. In particular, in view of
Theorem \ref{th:g}, we have $g(1)=1$.  Conversely, if $g(1)=1$ is
the minimal solution of the above quadratic equation,
 then, for Theorem~\ref{th:g}, $a_{-1}(1)\ge a_1(1)$.
\end{remark}

The convergence properties of the sequence $\{G_k\}_k$ stated by Theorem
\ref{th:G} allow to provide sufficient conditions under which
$G\in\QTD$.

\begin{theorem}\label{th:4.11}
If  $\lim_k\|G_k-G\|_\infty=0$, 
then the minimal
nonnegative solution $G$ of the matrix equation \eqref{eq:mateqG1} can
be written as $G=T(g)+E_g$ where $g(z)\in\mathcal W$ is the minimal
solution of \eqref{eq:g}, 
and $E_g\in \Endo{\ell^\infty}$
has the decay property. 
\end{theorem}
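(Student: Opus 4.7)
The plan is to exploit the decomposition $G_k = T(g_k) + E_k$ produced by the iteration \eqref{eq:3it}, where by construction each $g_k(z)$ is a Laurent polynomial and each correction $E_k$ has finite support. Trivially, every $E_k$ has the decay property of Definition~\ref{def:decay}. The strategy is then to show that $E_g := G - T(g)$ is the infinity-norm limit of the sequence $\{E_k\}_k$, and to invoke Theorem~\ref{thm:decayclosure} to conclude that $E_g$ itself has the decay property.

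First, I would observe that Theorem~\ref{th:g} (together with Theorem~\ref{th:3.5} identifying $g$ with the minimal solution of \eqref{eq:g}) guarantees $\lim_k \|g_k - g\|\w = 0$. By the Gohberg--Feldman inequality from Theorem~\ref{thm1}, this yields $\|T(g_k) - T(g)\|_\infty \leq \|g_k - g\|\w \to 0$. Hence $T(g_k) \to T(g)$ in the infinity norm, and in particular $T(g) \in \Endo{\ell^\infty}$, which makes the difference $E_g = G - T(g)$ a well-defined bounded operator on $\ell^\infty$.

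Next I would estimate
\begin{equation*}
\|E_g - E_k\|_\infty = \|(G - T(g)) - (G_k - T(g_k))\|_\infty \leq \|G - G_k\|_\infty + \|g - g_k\|\w,
\end{equation*}
where the first term tends to zero by the standing hypothesis $\lim_k \|G_k - G\|_\infty = 0$, and the second by the previous paragraph. Therefore $E_k \to E_g$ in the infinity norm.

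Finally, since each $E_k$ has finite support and thus satisfies the decay property, Theorem~\ref{thm:decayclosure} applied to the convergent sequence $\{E_k\}_k$ yields that $E_g$ has the decay property as well. Combined with $g \in \mathcal W$, this gives $G = T(g) + E_g \in \QTD$. There is no real obstacle here beyond assembling the pieces already proved: the hypothesis $\|G_k - G\|_\infty \to 0$ does the heavy lifting, and the main point is the stability of the decay property under infinity-norm limits provided by Theorem~\ref{thm:decayclosure}.
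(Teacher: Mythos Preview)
Your proof is correct and follows essentially the same route as the paper: decompose $G_k=T(g_k)+E_k$, use Theorem~\ref{th:g} to get $\norm{g_k-g}\w\to 0$ and hence $\norm{T(g_k)-T(g)}_\infty\to 0$, combine with the hypothesis to obtain $\norm{E_k-E_g}_\infty\to 0$, and then invoke Theorem~\ref{thm:decayclosure}. The only cosmetic difference is that the paper writes $\norm{T(g_k)-T(g)}_\infty=\norm{g-g_k}\w$ as an equality rather than your inequality, but either suffices.
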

\begin{proof}
Consider the sequence $G_k=T(g_k)+E_k\in\QTD$ generated by
\eqref{eq:3it}, where $g_k(z)\in\mathcal W$ and $E_k$ has finite
support.  Concerning the first part, we observe that
$\norm{E_k-E_g}_\infty\le\norm{G_k-G}_\infty+\norm{T(g_k)-T(g)}_\infty$.
Thus, since
$\norm{T(g_k)-T(g)}_\infty=\norm{T(g-g_k)}_\infty=\norm{g-g_k}_\w$, in
view of Theorem \ref{th:g} we have
$\lim_{k}\norm{T(g_k)-T(g)}_\infty=0$. Since $\lim_k\norm{G_k-G}_\infty=0$, we conclude that
$\lim_k \norm{E_k-E_g}_\infty=0$. Since $E_k$ has finite support, 
 then
it has the decay property so that, for Theorem \ref{thm:decayclosure},
$E_g$ has the decay property as well.  

\end{proof}

From Theorem \ref{th:G} the condition $A_{-1}e>A_1e$, which is equivalent to
$a_{-1}(1)>a_1(1)$ and $y_{-1}(1)>y_1(1)$, implies $\lim_k \| G_k-G\|_\infty=0$. We will weaken the
assumptions of  Theorem \ref{th:G} by removing the boundary condition
$y_{-1}(1)>y_1(1)$.  To this aim, consider the correction
$E_g=G-T(g)\in \Endo{\ell^\infty}$, where $G$ is the minimal nonnegative
solution to the equation \eqref{eq:mateqG1} and $g(z)$ is the solution
of minimum modulus to the equation \eqref{eq:g} which exists under the
assumptions of Theorem \ref{th:3.5}. Observe that if $E_g$ has not the decay
property, 
 then $w=|E_g|e$ is such that $\norm{w}_\infty<\infty$ but
$\lim_i w_i$, if it exists, is not zero. 

The following lemma is needed to prove the main result of this
section.  The only assumption needed is that
$a_1(1)+a_{-1}(1)>0$. This condition is very mild since it excludes
only the case where $a_{i,j}=0$ for $i=1,-1$ and for any $j$.

\begin{lemma}\label{lem:2}
Assume that $a_1(1)+a_{-1}(1)>0$ and define
$
\psi(z)=\frac{a_1(z)}{1-a_0(z)-a_1(z)g(z)},~ \hbox{for }|z|=1$.
 Then $\psi(z)\in\mathcal W$, 
$\psi(z)\ge_{cw} 0$, $\norm{\psi}\w=\psi(1)$ and for $G=T(g)+E_g$
we have
\begin{equation}\label{eq:ult}
E_g\doteq T(\psi^k)E_gG^k, \quad k=0,1,2,\ldots .
\end{equation}
\end{lemma}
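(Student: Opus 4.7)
The plan proceeds in three stages: first verify the analytic properties of $\psi$, then derive the $k=1$ identity $E_g\doteq T(\psi)E_gG$, then iterate.

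For the first stage, the idea is to expand $\psi$ as a Neumann series $\psi=a_1\sum_{k\geq 0}(a_0+a_1g)^k$. Since $a_{-1},a_0,a_1$ and, by Theorem~\ref{th:g}, $g$ all have nonnegative Laurent coefficients, one has $\norm{a_0+a_1g}\w=a_0(1)+a_1(1)g(1)$. A short case analysis—depending on whether $a_{-1}(1)\ge a_1(1)$, so that $g(1)=1$ or $g(1)=a_{-1}(1)/a_1(1)$ by Theorem~\ref{th:g}—together with the hypothesis $a_{-1}(1)+a_1(1)>0$ shows $a_0(1)+a_1(1)g(1)<1$. The Neumann series therefore converges absolutely in $\mathcal W$, yielding $\psi\in\mathcal W$ with $\psi(z)\ge_{cw}0$, whence $\norm{\psi}\w=\psi(1)$.

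For the second stage, I substitute $G=T(g)+E_g$ into $G=A_{-1}+A_0G+A_1G^2$ and expand. Writing each coefficient as $A_i=T(a_i)+\tilde E_i$ with $\tilde E_i\doteq 0$ and applying Corollary~\ref{cor:T(ab)} and Lemma~\ref{lem:ideal}(ii)--(iii), I absorb every product containing some $\tilde E_i$ into a $\doteq 0$ term. The purely Toeplitz part collapses via the identity $a_{-1}+a_0g+a_1g^2=g$ to $T(g)$, which cancels with the left-hand side. Grouping the remaining terms gives
\begin{equation*}
E_g \doteq (A_0+A_1T(g))E_g + A_1E_gG \doteq T(a_0+a_1g)E_g + T(a_1)E_gG.
\end{equation*}
With $u=1-a_0-a_1g$ this reads $T(u)E_g\doteq T(a_1)E_gG$. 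The payoff of the first stage is that $u^{-1}\in\mathcal W$, so left-multiplying by $T(u^{-1})$ and using Corollary~\ref{cor:T(ab)} (which gives $T(u^{-1})T(u)\doteq I$ and $T(u^{-1})T(a_1)\doteq T(a_1u^{-1})=T(\psi)$) yields $E_g\doteq T(\psi)E_gG$.

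For the third stage I induct on $k$. The base case $k=0$ is trivial. Assuming $E_g\doteq T(\psi^k)E_gG^k$, I multiply this congruence on the left by $T(\psi)$ and on the right by $G$—which preserves $\doteq$ by Lemma~\ref{lem:ideal}(ii)--(iii)—and combine with the $k=1$ identity to obtain
\begin{equation*}
E_g \doteq T(\psi)E_gG \doteq T(\psi)T(\psi^k)E_gG^{k+1} \doteq T(\psi^{k+1})E_gG^{k+1},
\end{equation*}
where the last step is Corollary~\ref{cor:T(ab)}. The main obstacle is bookkeeping in the second stage: each time I replace a product of Toeplitz matrices by the Toeplitz matrix of the product, or drop a $\tilde E_i$ factor, I must check via the correct clause of Lemma~\ref{lem:ideal} that the other factor—which in general is \emph{not} itself $\doteq 0$—can be placed on the appropriate side of the offending term.
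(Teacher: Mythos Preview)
Your proof is correct and follows essentially the same route as the paper: show $\gamma(1)=a_0(1)+a_1(1)g(1)<1$ so that the Neumann series for $(1-\gamma)^{-1}$ converges in $\mathcal W$ with nonnegative coefficients; then expand the matrix equation with $G=T(g)+E_g$ to obtain $T(1-a_0-a_1g)E_g\doteq T(a_1)E_gG$, left-multiply by $T((1-\gamma)^{-1})$, and induct. Your case split on $a_{-1}(1)\gtrless a_1(1)$ is equivalent to the paper's split on $a_{-1}(1)>0$ versus $a_{-1}(1)=0$, and your more explicit bookkeeping of which clause of Lemma~\ref{lem:ideal} is invoked at each step is a welcome clarification of what the paper compresses into a single sentence.
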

\begin{proof}
We show that the function $\varphi(z)=1-\gamma(z)$,
$\gamma(z)=a_0(z)+a_1(z)g(z)$, is such that $\varphi(z)\ne 0$ for
$|z|=1$. Since $\gamma(z)\ge_{cw} 0$, then $|\gamma(z)|\le \gamma(1)$,
so that it is sufficient to prove that $\gamma(1)<1$.  We have
$
\gamma(1)=a_0(1)+a_1(1)g(1)\le a_0(1)+a_1(1)=1-a_{-1}(1)$.
Therefore, if $a_{-1}(1)>0$, 
 then $\gamma(1)<1$. On the other hand, if
$a_{-1}(1)=0$, 
 then $g(1)=0$ and $a_{1}(1)>0$ since, by assumption,
$a_1(1)+a_{-1}(1)>0$, so that
$
\gamma(1)=a_0(1)=1-a_{1}(1)<1$.
This way, $\psi(z)=a_1(z)/\varphi(z)\in\mathcal W$. Moreover, since 
$\sum_{k=0}^\infty \gamma(1)^k=1/(1-\gamma(1))<\infty$, and $\gamma(z)\ge_{cw}0$, then 
 $\sum_{k=0}^\infty \gamma(z)^k\in\mathcal W$ and coincides
with $1/\varphi(z)$. Moreover, since $\gamma(z)\ge_{cw}0$, 
then $1/\varphi(z)\ge_{cw}0$ and $\psi(z)\ge_{cw}0$.
From the condition $A_1G^2+(A_0-I)G+A_{-1}=0$, relying on Lemma
\ref{lem:ideal} and Corollary \ref{cor:T(ab)}, we obtain 
\begin{equation}\label{eq:st}
T(a_1)E_gG\doteq T(1-a_0-a_1g)E_g.
\end{equation}
By multiplying to the left both sides of \eqref{eq:st} by
$T(1/\varphi(z))$, in view of \eqref{eq:dec}, we get
\[
T(\psi)E_gG\doteq E_g,\quad \psi(z)=\frac{a_1(z)}{1-a_0(z)-a_1(z)g(z)}.
\]
Finally, by multiplying the above equation to the left by $T(\psi)$
and to the right by $G$, by means of the induction argument, we get
\eqref{eq:ult}.
\end{proof}

It is interesting to point out that if $a_{-1}(1)\ne 0$, 
 then the
function $\psi(z)$ can be written in a simpler form as
$\psi(z)=g(z)\frac{a_1(z)}{a_{-1}(z)}$.

We are ready to prove the main theorem of this section which provides
conditions under which $G\in\QTD$ or
$G\in\mathcal{EQT}$.

\begin{theorem}\label{th:4.13}
Assume that $a_{-1}(1)+a_1(1)>0$.  Let $G$ be the minimal nonnegative
solution of \eqref{eq:mateqG1} decomposed as $G=T(g)+E_g$, where
$g(z)$ is the minimal solution of \eqref{eq:g} and $E_g:=G-T(g)$. Then the following properties hold:
\begin{enumerate}
\item If $a_{-1}(1)>a_1(1)$, then $E_g$ has the decay property.
\item If $a_{-1}(1)< a_1(1)$ and $\lim_k
\norm{G^k}_\infty=0$, 
 then $E_g$ has the decay property. 
\item 
If
$a_{-1}(1)< a_1(1)$, $G$ is stochastic and strongly ergodic,
that is $\lim_k\|G^k-e\pi_g^T\|_\infty=0$, and $\pi_g^TG=\pi_g^T$, $\pi_g^Te=1$,
then
$E_g=(1-g(1))e\pi_g^T+S_g$, where $S_g$ has the decay property.
\item  If $G$ is stochastic and $E_g$ has the decay property, then $a_{-1}(1)\ge
a_1(1)$ and $g(1)=1$.
\end{enumerate}
\end{theorem}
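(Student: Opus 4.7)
The plan is to exploit the identity $E_g \doteq T(\psi^k) E_g G^k$ of Lemma~\ref{lem:2} for every $k \ge 0$, combined with the closure of the decay property under infinity-norm limits (Theorem~\ref{thm:decayclosure}) and with the value of $\psi(1)$ in the two regimes pinned down by Theorem~\ref{th:g}. The basic estimate is
\[
\norm{T(\psi^k) E_g G^k}_\infty \le \norm{\psi^k}\w \norm{E_g}_\infty \norm{G^k}_\infty = \psi(1)^k \norm{E_g}_\infty \norm{G^k}_\infty,
\]
since $\psi \ge_{cw} 0$ gives $\norm{\psi^k}\w = \psi(1)^k$, and $\norm{G^k}_\infty \le 1$ as $G$ is substochastic.

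For Part 1, since $a_{-1}(1) > a_1(1)$, Remark~\ref{rem:1} gives $g(1) = 1$, so $\psi(1) = a_1(1)/(1-a_0(1)-a_1(1)) = a_1(1)/a_{-1}(1) < 1$ and the above estimate tends to $0$. Writing $E_g = [E_g - T(\psi^k) E_g G^k] + T(\psi^k) E_g G^k$, the first summand has the decay property by Lemma~\ref{lem:2} and the second has vanishing infinity norm, so Theorem~\ref{thm:decayclosure} yields that $E_g$ has the decay property. Part 2 is handled identically: now $g(1) = a_{-1}(1)/a_1(1)$ yields $\psi(1) = a_1(1)/a_1(1) = 1$, but the hypothesis $\norm{G^k}_\infty \to 0$ still drives the bound to zero.

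Part 3 is the main obstacle, since one must isolate the non-decaying rank-one contribution $(1-g(1))e\pi_g^T$. Writing $G^k = e\pi_g^T + R_k$ with $\norm{R_k}_\infty \to 0$, the plan is to split
\[
T(\psi^k) E_g G^k = T(\psi^k)(E_g e)\pi_g^T + T(\psi^k) E_g R_k,
\]
use $E_g e = Ge - T(g)e = (1-g(1))e + H(g^-)e$ together with $T(\psi^k)e = e - H((\psi^k)^-)e$ (since $\psi(1)=1$), and invoke $H(g^-) \doteq 0$, $H((\psi^k)^-) \doteq 0$, and $T(\psi^k) H(g^-) \doteq 0$ from Lemma~\ref{lem:ideal}(iii) to obtain
\[
T(\psi^k) E_g e = (1-g(1))e + w_k,
\]
where $w_k$ is a vector with entries tending to zero. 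Since $\pi_g \in \ell^1$, the matrix $w_k\pi_g^T$ has the decay property, and $\norm{T(\psi^k) E_g R_k}_\infty \le \norm{E_g}_\infty \norm{R_k}_\infty$. Rearranging the identity of Lemma~\ref{lem:2} then gives
\[
S_g := E_g - (1-g(1))e\pi_g^T = [E_g - T(\psi^k)E_gG^k] + w_k\pi_g^T + T(\psi^k)E_gR_k,
\]
where the first two summands have the decay property and the last has infinity norm tending to $0$ as $k \to \infty$. Theorem~\ref{thm:decayclosure} then yields that $S_g$ has the decay property.

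Part 4 is immediate: from $Ge = e$ and $T(g)e = g(1)e - H(g^-)e$ one gets $(g(1)-1)e = H(g^-)e - E_g e$. The right-hand side tends componentwise to zero because both $H(g^-)$ and $E_g$ have the decay property, while the left-hand side is a constant vector; this forces $g(1) = 1$, and Theorem~\ref{th:g} then gives $a_{-1}(1) \ge a_1(1)$.
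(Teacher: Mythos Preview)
Your proof is correct and follows essentially the same approach as the paper: both proofs hinge on the identity $E_g \doteq T(\psi^k)E_gG^k$ from Lemma~\ref{lem:2}, the computation of $\psi(1)$ in the two regimes via Theorem~\ref{th:g}, and the closure of the decay property under $\infty$-norm limits (Theorem~\ref{thm:decayclosure}); for Part~3 both isolate the rank-one term $(1-g(1))e\pi_g^T$ by writing $G^k = e\pi_g^T + R_k$ and using $T(\psi^k)E_g e \doteq (1-g(1))e$, and Part~4 is handled identically by comparing $Ge$ with $T(g)e$. The only difference is cosmetic: you spell out the vector $w_k$ explicitly, whereas the paper works directly with the $\doteq$ relation applied to column vectors.
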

\begin{proof}
The proof of properties 1--3 relies on equation \eqref{eq:ult} and on the limit for
$k\to\infty$ of its right-hand side. This limit depends on the value of
$\|\psi\|_\w=\psi(1)$, where $\psi(z)$ is defined in Lemma
\ref{lem:2}. Therefore, 
 we show that either $\psi(1)=1$ or $\psi(1)<1$
and we deduce the properties of $E_g$ accordingly.
 Observe that if $a_{-1}(z)=0$, 
 then
$a_0(1)+a_1(1)=1$ and $g(1)=0$ so that $\psi(1)=1$.  If $a_{-1}(z)\ne
0$, for Theorem \ref{th:g} we may distinguish two cases: the case
where $a_{-1}(1)/a_1(1)> 1$ and the case $a_{-1}(1)/a_1(1)< 1$. In
the first case $g(1)=1$ so that $\psi(1)=a_1(1)/a_{-1}(1)<1$. In the
second case $g(1)=a_{-1}(1)/a_1(1)$ so that $\psi(1)=1$.
Consider the case $a_{-1}(1)>a_1(1)$. Since $g(1)=1$, 
 then
$\psi(1)=a_1(1)/a_{-1}(1)<1$. Moreover, since $\psi(z)\ge_{cw}0$, 
 then
$\norm{\psi^k}_\w=\psi(1)^k$, whence $\lim_k
\norm{\psi^k}_\w=\lim_k\psi(1)^k=0$. Therefore, 
$\lim_k\norm{T(\psi^k)}_\infty=0$.  On the other hand, since $Ge\le e$
and $G\ge 0$, 
 then $\norm{G^k}_\infty\le 1$. Whence, since $E_g\in
\Endo{\ell^\infty}$, 
 then from equation \eqref{eq:ult} in Lemma
\ref{lem:2} we have $\lim_k\norm{T(\psi^k)E_g G^k}_\infty\le
\lim_k\norm{T(\psi^k)}_\infty\norm{E_g}_\infty\norm{G^k}_\infty=0$.
That is, the sequence $\{F_k\}_k$, $F_k=E_g-T(\psi^k)E_gG^k$, is such
that $F_k\doteq 0$ and $\lim_k\|E_g-F_k\|_\infty=0$. In view of
Theorem~\ref{thm:decayclosure}, applied to the sequence $\{F_k\}_k$, we
conclude that $E_g\doteq 0$ so that $E_g$ fulfills the decay property.
Now, consider the case $a_{-1}(1)< a_1(1)$. Observe that since
$\psi(1)=1$, 
 then $\norm{\psi}_\w=\psi(1)=1$ and
$\norm{\psi^k}_\w=\psi(1)^k=1$, therefore, 
$\norm{T(\psi^k)}_\infty=\psi(1)^k=1$. If $\lim_k \norm{G^k}_\infty=0$
then, 
 taking the limit in \eqref{eq:ult}, in view of
Theorem~\ref{thm:decayclosure} applied to the sequence $\{F_k\}_k$ we
deduce that $E_g$ has the decay property. On the other hand, if the
Markov chain associated with the matrix $G$ is strongly ergodic, that
is, $\lim_k\norm{G^k-e\pi_g^T}_\infty=0$, we have $G^k=e\pi_g^T+R_k$
where $\lim_k\norm{R_k}_\infty=0$. Therefore,
\[
E_g-T(\psi^k)E_ge\pi_g^T\doteq \widehat E_k,\quad \widehat E_k=T(\psi^k)E_gR_k.
\]
Since $\norm{\widehat E_k}_\infty\le 
\norm{T(\psi^k)}_\infty\norm{E_g}_\infty\norm{R_k}_\infty=
\norm{E_g}_\infty\norm{R_k}_\infty$, then $\lim_k\norm{\widehat E_k}_\infty=0$.
Now, define $A=E_g-(1-g(1))e\pi_g^T$ and $A_k=A-\widehat E_k$. 
Since $E_ge=Ge-T(g)e\doteq (1-g(1))e$ and $T(\psi^k)e\doteq e$, 
 then 
$(1-g(1))e\doteq T(\psi^k)E_ge$ whence
\[\begin{aligned}
A_k&= E_g-(1-g(1))e\pi_g^T-\widehat E_k\doteq 
E_g-T(\psi^k)E_ge\pi_g^T-T(\psi^k)E_gR_k\\
&=E_g-T(\psi^k)E_g(e\pi_g^T+R_k)
=E_g-T(\psi^k)E_gG^k\doteq 0
\end{aligned}
\]
in view of \eqref{eq:ult}, thus $A_k\doteq 0$. Since
$\lim_k\norm{A-A_k}_\infty=0$ we may apply
Theorem~\ref{thm:decayclosure} and conclude that $A\doteq 0$, that is
$E_g\doteq (1-g(1))e\pi_g^T$, in other words $E_g=(1-g(1))e\pi_g^T+S_g$ where $S_g$ has the decay property.
Concerning the last property, consider
$w:=|E_g|e=|G-T(g)|e\ge |Ge-T(g)e|=|e-T(g)e|$. Since by assumption,
$\lim_i w_i=0$, 
 then $\lim_i(T(g)e)_i=1$. On the other hand, since
$g(z)\in\mathcal W$ has nonnegative coefficients, 
 then
$\lim_i(T(g)e)_i=g(1)$, so that $g(1)=1$.  Since $g(1)=1$ is the
minimal nonnegative solution of the scalar equation
$a_1(1)\lambda^2+(a_0(1)-1)\lambda+a_{-1}(1)=0$, in view of Theorem \ref{th:g}, it follows that $a_{-1}(1)\ge a_1(1)$.
\end{proof}


\section{Applications and numerical results}
\label{sec:numerical}
This section is devoted to validate the computational framework on some
applications of 1-D and 2-D random walks,  
 which require the extended algebras  $\QTD$ and $\EQT$. The
experiments are carried out on a PC with 
a Xeon E5-2650 CPU running at 2.20 GHz, restricted to $8$ cores
and $10$ GB of RAM. 
The implementation relies on
the \cqttoolbox~\cite{bmr}, and the package {\tt SMCSolver} of \cite{smcsolver}, tested under MATLAB2019a. We have used
the tolerance $10^{-14}$ for truncation and compression in the \cqttoolbox.

\subsection{1D random walk with reset}\label{sec:rw}

Here, we consider a discrete time
Markov chain on the set of states $\mathbb N$, whose probabilities 
 of
left/right jumps 
 are independent of the current state,
with the only exception of the boundary condition.
In this setting the
transition probability matrix $P$  takes
the form
\[
P=
\begin{bmatrix}
b_0&a_1& a_2& a_3 & \ldots\\
b_{-1} & a_{0} &a_1 & a_2 & \ddots\\
b_{-2} & a_{-1} &a_{0}&a_1& \ddots \\
\vdots& \vdots &\ddots&\ddots&\ddots
\end{bmatrix},
\]
where the entries are nonnegative and such that $b_{-i}=1-\sum_{j=-i+1}^\infty a_j$, for $i=0,1,2,\ldots$.
Observe that, if $\sum_{j\in\mathbb{Z}}a_j=\gamma<1$, then
$\lim_{i\to\infty}b_{-i}=1-\gamma$, hence $P\in\EQT$.

	\begin{figure}\label{fig:1}
	\begin{center}
		\begin{tabular}{cc}
		\resizebox{6cm}{!}{
	\begin{tikzpicture}
	\begin{loglogaxis}[
		legend pos = north west, width = .7\linewidth, height = .38\textheight,
		xlabel = {Maximum skip length $m$}, 
		ylabel = {CPU time}, title = {$\gamma = 0.90$}]
	    \addplot table {example_1d_threshold1e14_0.90.dat}; 
	    \addplot table[x index = 0, y index = 2] {example_1d_threshold1e14_0.90.dat};
	    \addplot table[x index = 0, y index = 3] {example_1d_threshold1e14_0.90.dat};
	    \legend{QT linear system, Squaring, MAM + CR}
	\end{loglogaxis}
	\end{tikzpicture}}
&	\resizebox{6cm}{!}{
\begin{tikzpicture}
		\begin{loglogaxis}[
		legend pos = north west, width = .7\linewidth, height = .38\textheight,
		xlabel = {Maximum skip length $m$},
		ylabel = {CPU time}, title = {$\gamma = 0.99$}]
	    \addplot table {example_1d_threshold1e14_0.99.dat}; 
	    \addplot table[x index = 0, y index = 2] {example_1d_threshold1e14_0.99.dat};
	    \addplot table[x index = 0, y index = 3] {example_1d_threshold1e14_0.99.dat};
	    \legend{QT linear system, Squaring, MAM + CR}
	\end{loglogaxis}
	\end{tikzpicture}}
\\
\resizebox{6cm}{!}{
	\begin{tikzpicture}
	\begin{loglogaxis}[
		legend pos = north west, width = .7\linewidth, height = .38\textheight,
		xlabel = {Maximum skip length $m$}, 
		ylabel = {Residual error $\norm{\pi^T - \pi^T P}_1$}, 
		title = {$\gamma = 0.90$}, ymax=5e-12]
	    \addplot table[x index = 0, y index = 4] {example_1d_threshold1e14_0.90.dat};
	    \addplot table[x index = 0, y index = 5] {example_1d_threshold1e14_0.90.dat};
	    \addplot table[x index = 0, y index = 6] {example_1d_threshold1e14_0.90.dat};
	    \legend{QT linear system, Squaring, MAM + CR}
	\end{loglogaxis}
	\end{tikzpicture}}
	&	\resizebox{6cm}{!}{
	\begin{tikzpicture}
	\begin{loglogaxis}[
		legend pos = north west, width = .7\linewidth, height = .38\textheight,
		xlabel = {Maximum skip length $m$}, 
		ylabel = {Residual error $\norm{\pi^T - \pi^T P}_1$}, 
		title = {$\gamma = 0.99$}, ymax=5e-12] 
	    \addplot table[x index = 0, y index = 4] {example_1d_threshold1e14_0.99.dat};
	    \addplot table[x index = 0, y index = 5] {example_1d_threshold1e14_0.99.dat};
	    \addplot table[x index = 0, y index = 6] {example_1d_threshold1e14_0.99.dat};
	    \legend{QT linear system, Squaring, MAM + CR}
	\end{loglogaxis}
	\end{tikzpicture}}	
\end{tabular}	\caption{1-D Random walk with maximum skip length $m$ and reset with probability $1-\gamma$. CPU time in seconds (top line) and residual errors (bottom line) in the computation of the vector $\pi$ for two values of $\gamma$ 
and for three different algorithms: Solving a QT linear system, performing repeated squarings, applying Cyclic Reduction.}
\end{center}
\end{figure}
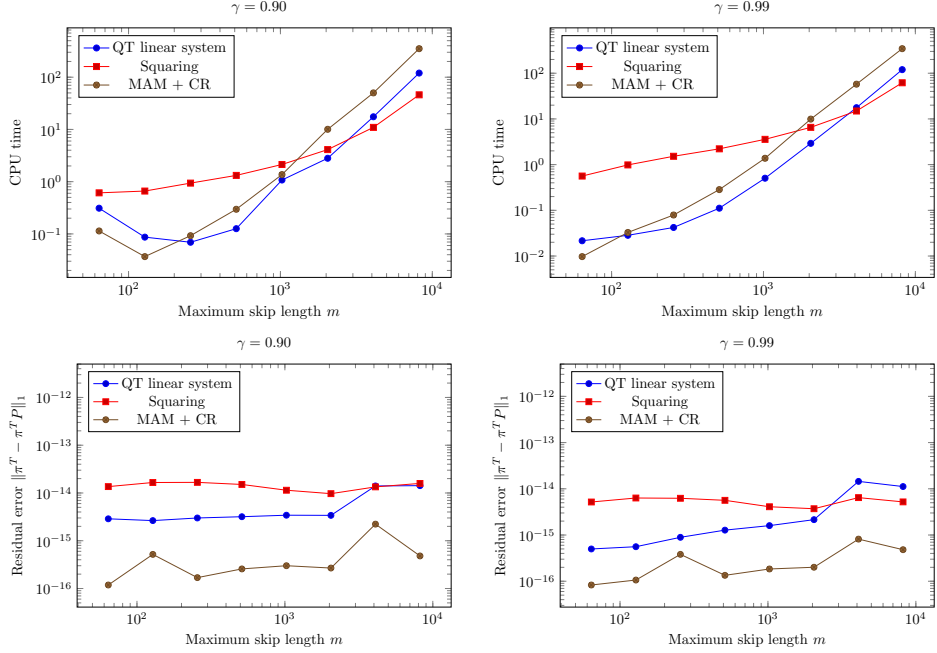

Recently some interest has been raised by models that incorporate
exogenous drastic events. Examples might include catastrophes,
rebooting of a computer or a strike causing a shutdown in the
transportation system. This is modeled by  a
random walk on $\mathbb N$ whose transitions allow to reach an
initial state from every state. 
Indeed, if
$a_j=0$ for $j<-m$, where $m\ge 1$, and if $\sum_{j=-m}^\infty a_j=\gamma<1$, then from any state
$k\ge m $ the process can reach state 0 with probability
$1-\gamma$. In other words, when the process is in any state $k\ge m
$, it is reset with probability $1-\gamma$.

The transition matrix $P$ generalizes the well studied Markov processes of M/G/1 and G/M/1-type, having an upper and lower Hessenberg structure, respectively \cite{bini2005numerical},  \cite{neuts:book}. These Markov processes are used to model a wide variety of queueing problems  \cite{alfa:book}, \cite{he:book}.
In particular, the case of models with reset has been analyzed in  \cite{hg:mam}, \cite{vhb:mam}, \cite{vhb}  and  \cite{vvb}.
Assume that the matrix $P$ is irreducible. If $\gamma\ne 0$, then the Markov chain is positive recurrent \cite[Theorem 5.3]{bini2005numerical} so that there exists  the steady state vector $\pi$
such that $\pi^TP=\pi^T$, $\pi^Te=1$. If $a_j=0$ for $|j|>m$, where $m\ge 1$, the matrix $P$ can be partitioned into $m\times m$ dimensional blocks, thus obtaining a matrix of the form
\[
P=
\begin{bmatrix}
W_0&V_1& &  &{\Large 0}\\
W_{-1} & V_{0} &V_1 &  & \\
W_{-2} & V_{-1} &V_{0}& V_1 & \\
W_{-3} &          &V_{-1}&V_0&\ddots\\
\vdots&{\Large 0}&      &\ddots&\ddots 
\end{bmatrix}.
\]
The vector $\pi$, partitioned into $m$-dimensional vectors $\pi_i$, $i=0,1,\ldots$, can be computed by means of the recursion $\pi_{i+1}^T=\pi_i^T R$, $i=0,1,\ldots$ where $\pi_0$ solves the equation 
$\pi_0^T(I-\sum_{i=0}^\infty R^i W_{-i})=0$,
$\pi_0^T(I-R)^{-1}e=1$ and $R$ is the minimal nonnegative solution of the equation $X=X^2V_{-1}+XV_0+V_1$ (see \cite[Theorem 5.4]{bini2005numerical}, \cite{neuts:book}). This strategy for computing  $\pi$ is known  as Matrix Analytic Method \cite{neuts:book}.

In our case, we can decompose $P=T+e v^T$,
where $T\in\QTD$ is semi-infinite quasi-Toeplitz and
$v^T=(1-\gamma,0,\ldots)$, and get the relation
\[
 \pi^T= \pi^TP=\pi^TT+(\pi^Te)v^T=\pi^TT+v^T.
\]
This yields $ \pi^T(I-T)=v^T $ that enables to retrieve $\pi^T$ by
solving a linear system with the matrix $I-T$ in
$\QTD$. Note that, in this case, the class $\QTD$ is
used both in the formulation of the problem and in the algorithmic
procedure which is simply reduced to the application of the Matlab
backslash command available in the extended \cqttoolbox \cite{bmr}, see Section~\ref{sec:eqt-tool}.

A different algorithmic approach, which exploits the computational
properties of the class $\mathcal{EQT}$, is to apply the power method
implemented by means of the repeated squaring technique to generate
the sequence $P_{k+1}=P_k^2$, $k\ge 0$, starting with $P_0=P$, which converges
quadratically to the limit $e\pi^T$.
In this case, since $\mathcal{EQT}$ is an algebra, all the matrices $P_k$
belong to $\mathcal{EQT}$ and can be computed by means of the command
{\tt P = P*P;} available in the extended arithmetic of the
\cqttoolbox{}, see Section~\ref{sec:eqt-tool}.

 We assume the following configuration for the
transition probabilities: $a_j=\frac{\theta\sigma_j}{j^4}$, for $-m\le j\le m$, where $\sigma_j$ is a random number uniformly distributed in $[1,2]$,   $a_j=0$ for $|j|>m$, and $\theta$ is chosen in such a way that $\sum_j a_j=\gamma\in[0,1]$. The values $b_j$ are such that $P$ is stochastic. Except for the first column, the matrix $P$ is a Toeplitz matrix with bandwidth $2m+1$.
The experiments have been run $100$ times and the results 
for residuals and timings have been averaged. 

We have compared the two algorithms above and the Matrix Analytic Method (MAM) where we used the algorithm of cyclic reduction (CR) from the package {\tt SMCSolver} for solving the matrix equation. It is worth saying that CR is one of the fastest algorithms customarily used to solve this kind of problems for finite matrices.
Figure \ref{fig:1} reports CPU time and the residual error $\norm{\pi^T - \pi^T P}_1$ in computing the vector $\pi$ for two different values of $\gamma=0.9, 0.99$ and for $m$ taking values in the range $[2^6,2^{13}]$. We may observe that the algorithms based on our approach perform faster than the algorithm based on the combination of CR and the reblocking technique. 
For instance, for $m=2^{13}$ independently of the value of $\gamma$, the method based on  the combination of CR and the reblocking technique takes 350 seconds while the method based on the ``backslash'' command takes 120 seconds and the method based on repeated squarings takes just 46 seconds and 62 seconds for $\gamma=0.9$ and $\gamma=0.99$, respectively, that is, it is about 8 times faster. 
Concerning the accuracy, all the algorithms have a good performance, with 
the one based on CR performing slightly better. The approaches using 
\texttt{cqt-toolbox} achieve an accuracy within the magnitude of the 
chosen truncation threshold, which is set to $10^{-14}$.

\subsection{Two-node Jackson network with reset}\label{sec:tandem}
Here, we consider the Two-node Jackson network of \cite{MT} modified
by allowing a reset.  This model, represented by a continuous time
Markov chain, is described in Figure \ref{fig:MT} and consists of two
queues $Q_1$ and $Q_2$ with buffers of infinite capacity.  Customers
arrive at $Q_1$ and $Q_2$ according to two independent Poisson
processes with rates $\lambda_1$, $\lambda_2$.  Customers are served
at $Q_1$ and $Q_2$ with independent service times exponentially
distributed with rates $\mu_1$ and $\mu_2$, respectively.  On leaving
$Q_1$, two events may occur: either there is a reset of the queue
where all the customers waiting to be served in $Q_1$ leave the
system, this happens with probability $1-\gamma$ for $0<\gamma<1$; or,
with probability $\gamma$, one customer exits from $Q_1$. The latter enters
$Q_2$ with probability $p$ or leaves the system with probability $1 -
p$, where $0 < p < 1$.  After completing service at $Q_2$, the
customer may enter again $Q_1$ with probability $q$ or may leave the
system with probability $1 - q$, where $0 < q < 1$.

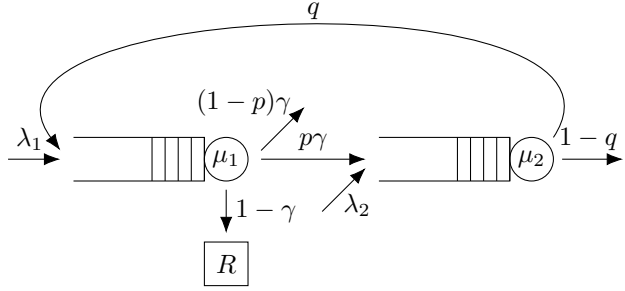
\begin{figure}
  \centering

  \begin{tikzpicture}[scale=0.58]
    \def\arrowscale{1.3}
    \def\myarr{-{Latex[scale=\arrowscale]}}
    \draw[\myarr] (-1.5,.5) -- (-0.3,.5);
    \node[anchor=south] at (-1.0,0.5) {$\lambda_1$};
    
    \draw (0,0) -- (3,0) -- (3,1) -- (0,1);
    \foreach \j in {0, 0.3, ..., 1.5} {
      \draw (3-\j,0) -- (3-\j,1);
    }

    \draw (3.5,0.5) circle (0.5);
    \node at (3.5,0.5) {$\mu_1$};

    \draw[\myarr] (4.3,.7) -- (5.3,1.7);
    \node at (3.9,1.8) {$(1-p)\gamma$};

    \draw[\myarr] (4.3,.5) -- (6.7,.5);
    \node[anchor=south] at (5.5,.5) {$p\gamma$};


    \draw[\myarr] (5.7,-.7) -- (6.7,.3);
    \node at (6.5,-.55) {$\lambda_2$};

    \draw (7,0) -- (10,0) -- (10,1) -- (7,1);
    \foreach \j in {0, 0.3, ..., 1.5} {
      \draw (10-\j,0) -- (10-\j,1);
    }

    \draw (10.5,0.5) circle (.5);
    \node at (10.5,0.5) {$\mu_2$};

    \draw[\myarr] (3.5,-0.2) -- (3.5,-1.2);
    \node[anchor=west] at (3.5,-0.7) {$1-\gamma$};
    \draw (3,-1.4) rectangle (4,-2.4);
    \node at (3.5,-1.9) {$R$};

    \draw[\myarr] (11.2,.5) -- (12.6,.5);
    \node[anchor=south] at (11.8,.5) {$1-q$};

    \draw[\myarr] (11,1.0) to [bend right=125] (-0.3,.7);
    \node at (5.5,3.9) {$q$};
  \end{tikzpicture}

  \caption{Pictorial description of the transitions for the 
    two node Jackson network with reset. The queue $Q_1$ is on the left, the queue $Q_2$ is on the right. The square denoted by $R$
    indicates the reset event which is triggered with probability
  $1-\gamma$ after service at the queue $Q_1$.}
  \label{fig:MT}
\end{figure}

The probability matrix, obtained after uniformization from the generator matrix
encoding the transition rates \cite{lr:book}, is given by $P=\qtoep(B_0,B_1;\,A_{-1},A_0,A_1)$ where 
\begin{equation}\label{eq:tandem}
\begin{aligned}
& A_{-1}=\frac 1\theta \qtoep((1-q)\mu_2,q\mu_2;\, 0, (1-q)\mu_2,q\mu_2),\\
& A_0=\frac 1\theta \qtoep(\gamma\mu_1, \lambda_1;\, \gamma(1-p)\mu_1,0,\lambda_1)+\frac{1-\gamma}\theta ee_1^T,\\
& A_1=\frac 1\theta \qtoep(\lambda_2,0;\, \gamma p\mu_1 , \lambda_2,0),\\
& B_0=A_0+\frac{\mu_2}\theta I,\quad
B_1=A_1,
\end{aligned}
\end{equation}
and $\theta=1-\gamma+\gamma\mu_1+\mu_2+\lambda_1+\lambda_2$.
In this example we have $A_1,A_{-1}\in\QTD$ and
$A_0\in\mathcal{EQT}$. In this case $G\in\EQT$, so that it can be written as $ G= T( g)+E_g+ e v^T$, where $g$ is the solution of \eqref{eq:g}, $E_g$ has the decay property and $v\in\ell^1$.

Several generalizations of this model are possible. For instance, 
we may allow different reset levels or we may allow reset also in the second queue $Q_2$. In that case we would obtain a GI/M/1 Markov chain with semi-infinite blocks as those analyzed in \cite{lato:varese}.

 The parameters are set as follows:
$\lambda_1=2, \mu_1=3, \lambda_2=1,\mu_2=2,p=0.3, q=0.2$, with two different values of $\gamma$, namely, $\gamma=0.95$ and $\gamma=0.99$.  The symbol $g$ is computed once for all by means of the evaluation-interpolation algorithm of \cite{bmmj}.
We solve
equation \eqref{eq:mateqG1}, with coefficients defined as in
\eqref{eq:tandem}, by means of the iteration 
$
X_{k+1}=(I-A_0-A_1X_k)^{-1}A_{-1},
$
analyzed in \cite{bmmj},
 with $X_0=T(g)+(I-T(g))ee_1^T\in\EQT$,  for different values of the required output accuracy, obtained by modifying the parameter {\tt threshold} in the \cqttoolbox. The residual errors of the approximated solutions obtained this way and the CPU times are computed.  

In certain cases it is possible to express explicitly the vector $\pi$ in product form. In view of the results in \cite{gos19}, in our case it is not possible to provide this explicit representation of $\pi$.

 We compared this approach (QT-based method) with a truncation based algorithm (truncation method). This method, inspired by  \cite{lt:trunc}, is based on a heuristic for recovering the solution $G$ by the finite dimensional solution $G_k$ of the equation obtained by truncating to a finite size $k$ 
the infinite coefficients $A_{-1},A_0,A_1$.
More specifically, we expect that the $(k/2)\times (k/2)$  leading principal submatrix $G_{k,1/2}$ of $G_k$ is a good approximation of the leading principal  $(k/2)\times (k/2)$ submatrix of $G$, for sufficiently large values of $k$.
Therefore, by defining
$T_m$ the $m\times m$ leading principal submatrix of $T(g)$, 
  for the decay properties of $E_g$,
the last row $v_{k}$ of $G_{k,1/2}-T_{k/2}$ provides an approximation of the first $k/2$ components of $v$. 
The matrix $G_{k,1/2}$ is written as $G_{k,1/2}=T_{k/2}+ev_k^T +C_k$, so that
$C_k=G_{k,1/2}-T_{k/2}-ev_k^T$.
The approximated solution $\widehat G$ is defined as $\widehat G=T( g)+\widehat E_g+e\hat v^T$ where $\widehat E_g$ is the infinite matrix obtained by filling with zeros the matrix $C_k$ and $\hat v$ is the infinite vector obtained by filling with zeros the vector $v_k$. 
The finite dimensional minimal nonnegative solution $G_k$ is computed by means of the function {\tt QBD\_CR} of {\tt SMCSolver} \cite{smcsolver}.  
The residual error of $\widehat G$ are plotted against the CPU time needed for its computation, for increasing values of $k$. It is not easy to determine
a priori the value of $k$ required to reach a certain accuracy, so we have 
chosen the values of $k$ a posteriori to attain residuals
in the interval $[10^{-12}, 10^{-2}]$. In the considered experiment, 
this means a maximum size of $k = 1000$ for $\gamma = 0.95$, and $k = 5000$ for $\gamma = 0.99$.

In Figure \ref{fig:2} we plot the pairs (CPU time, residual errors) for the two different approaches and for two different values of the reset probability $1-\gamma$. The residual errors are computed as $R(G) := \norm{A_{-1} + A_0 G + A_1G^2 - G}_\infty$. We may see that for values of  $\gamma$ close to 1, in order to reach an approximation error closer to the machine precision, the QT-based approach is much faster than the method obtained by truncating the matrix to finite size. In particular, for $\gamma=0.99$, 
the truncation method requires about $20$ minutes to get the same accuracy that the QT-based technique obtains in about $10$ seconds. On the other
hand, for $\gamma = 0.95$, the QT-based method is slightly slower, but overall
the two methods perform comparably. In most models, 
the reset events have small probabilities, and this suggests that the QT-based 
method might be more suitable in this scenario.

\begin{figure}\label{fig:2}
	\begin{center}
		\begin{tabular}{cc}
		\resizebox{6cm}{!}{
		\begin{tikzpicture}
	\begin{loglogaxis}[
		legend pos = north east, width = .8\linewidth, height = .5\textheight,
		xlabel = {Time (sec)}, 
		ylabel = {Residual ~~ $\norm{A_{-1} + A_0 G + A_1G^2 - G}_\infty$}, title = {$\gamma = 0.95$}]
	    \addplot table {times_res_tandem_mam_rho_0.95_2.dat}; 
	    \addplot table {times_res_tandem_qt_rho_0.95_2.dat}; 
	    \legend{Truncation, QT}
	\end{loglogaxis}
	\end{tikzpicture}}
&	\resizebox{6cm}{!}{
	\begin{tikzpicture}
	\begin{loglogaxis}[
		legend pos = north east, width = .8\linewidth, height = .5\textheight,
		xlabel = {Time (sec)}, 
		ylabel = {Residual ~~ $\norm{A_{-1} + A_0 G + A_1G^2 - G}_\infty$}, title = {$\gamma = 0.99$}]
	    \addplot table {times_res_tandem_mam_rho_0.99.dat}; 
	    \addplot table {times_res_tandem_qt_rho_0.99.dat}; 
	    \legend{Truncation, QT}
	\end{loglogaxis}
	\end{tikzpicture}}
\end{tabular}	\caption{Two-node Jackson network with reset. Time versus accuracy of the QT-based method and of the truncation method relying on the {\tt SMCSolver} toolbox. For a small reset probability $1-\gamma$, the timings of the QT-based approach are much lower than the corresponding ones obtained upon truncation of the size combined with {\tt SMCSolver}.}
\end{center}
\end{figure}
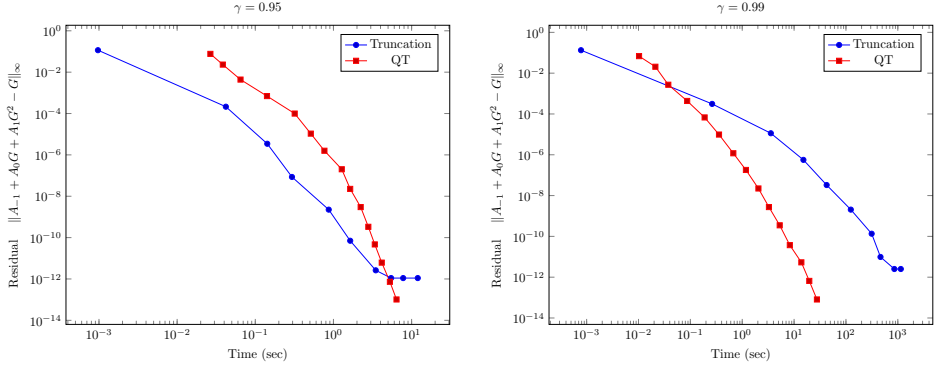

The case of finite but large queuing capacity networks can be treated with the same technique by relying on the QT-arithmetic for finite QT-matrices of the \cqttoolbox{} of \cite{bmr}.

\subsection{A Quasi-Birth-and-Death problem}
Consider a discrete-time Markov chain with state space $\mathbb N^2$
which models a random walk in the quarter plane, as described in Section~\ref{sec:qbd}. 
In \cite{zsh} a continuous
time model is analyzed, defined by the parameters
$a,b,\lambda>0, \theta=(a+b+\lambda)^{-1}$, which leads to the matrices
$A_{-1}=b\theta e_1e_1^T$,
$A_0=b\theta Z+a\theta Z^T$, $Z=\qtoep(0,0;1,0,0)$,
$A_1=\lambda\theta I$. In this case, the minimal nonnegative solution of
\eqref{eq:mateqG1} is $G=ee_1^T$, which belongs to $\EQT\setminus\QTD$.  

Here we treat a more general case, 
where the coefficients of \eqref{eq:mateqG1} belong to
$\QTD$ but $G$ does not and it is not explicitly known.
More specifically, we consider the two cases defined by:
\begin{align}
&A_{-1}=\frac19 \qtoep(3,3;2,0,1),\quad A_0=\frac19\qtoep(1,1;1, 0, 1),\quad A_1=\frac1{19}\qtoep(0,1;2,1,1), \label{eq:qbd1}\\
&A_{-1}=\frac1{16} \qtoep(5,5;2,0,1),\quad A_0=\frac1{16}\qtoep(2,2;7, 0, 2),\quad A_1=\frac1{16}\qtoep(1,1;2,1,1). \label{eq:qbd2}
\end{align}
Since $a_{-1}(1)<a_1(1)$, the minimal nonnegative solution $G$ of \eqref{eq:mateqG1} belongs to $\EQT\setminus\QTD$.  In particular, any
approximation $\hat G$ of $G$ in $\QTD$ will be affected by an
error $\norm{\hat G - G}_\infty \ge 1$. 

We have computed an approximation of the minimal nonnegative solution $G$ by applying the functional iteration $X_{k+1}=(I-A_0)^{-1}(A_{-1}+A_1X_k^2)$ analyzed in \cite{bmmj},  with starting approximation $X_0=T(g)+(I-T(g)) ee_1^T$.
In  Table \ref{tab:qbd} we report the features of the solution $G=T(g)+E_g+e v^T$ in the two cases. More specifically, we report the integers $n_-$ and $n_+$ such that $g_i<\epsilon$ for $i<-n_-$ or $i>n_+$, for $\epsilon=2^{-53}$ being the machine precision; the values $m,n$ such that $|c_{i,j}|<\epsilon$ for $i>m$ or for $j>n$, where $E_g=(c_{i,j})_{i,j \in \mathbb Z^+}$ and the rank $r$ of the $m\times n$ leading submatrix of $E_g$; the value $k$ such that $|v_i|<\epsilon$ for $i>k$. 
In Figure \ref{fig:mesh} we report a plot of the $200\times 200$ submatrix of the solution $G$ for the coefficients \eqref{eq:qbd1}. We may note the Toeplitz part $T(g)$ and the decay of the entries of the vector $v$.

\begin{table}\label{tab:qbd}
\begin{center}
\begin{tabular}{c|cccccc}
coefficients&$n_-$&$n_+$&$m$&$n$&$r$&$\ell$\\ \hline
\eqref{eq:qbd1}&617&46&859&52&9&29\\
\eqref{eq:qbd2}&1991&27&2874&31&12&52\\
\end{tabular}
\end{center}\caption{Numerical features of the symbol $g(z)=\sum_{i=-n_-}^{n_+}g_iz^i$, the correction $E_g\in\mathbb R^{m\times n}$ with rank $r$ and of the vector $v\in\mathbb R^\ell$ for  the solution $G=T(g)+E_g+ev^T$ for the QBD problems \eqref{eq:qbd1} and \eqref{eq:qbd2}. }
\end{table}

\begin{figure}\label{fig:mesh}
\begin{center}
\includegraphics[scale=0.28]{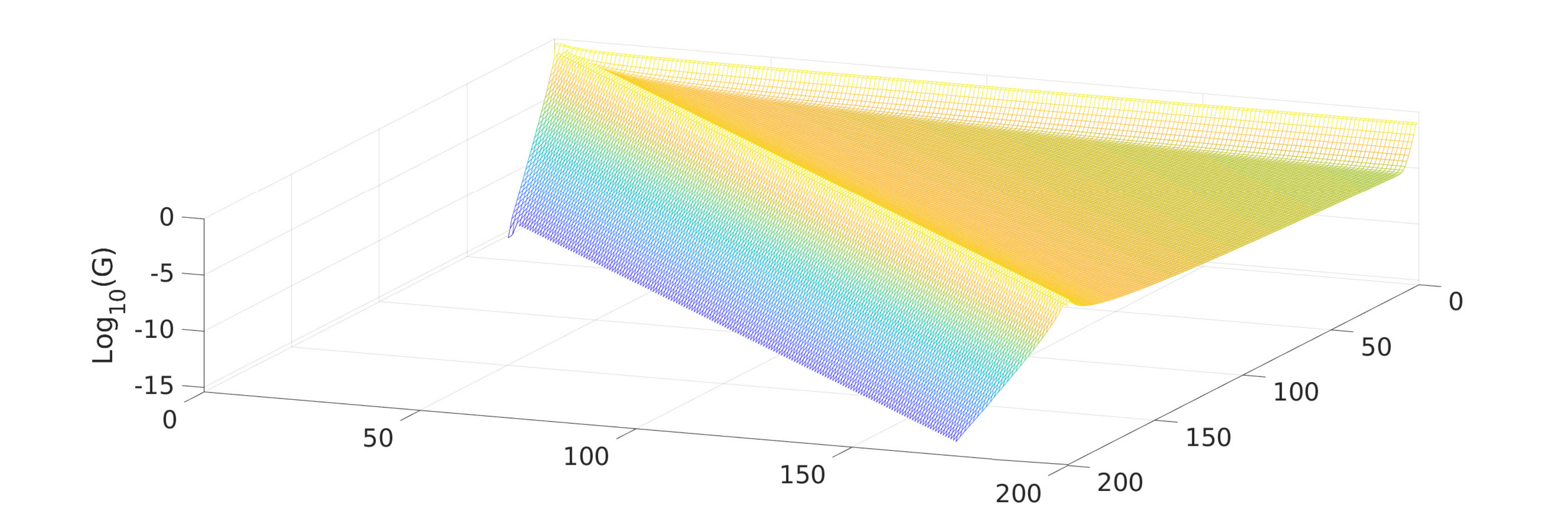}\caption{Log-plot of the $200\times 200$ submatrix of the solution $G$ for the coefficients \eqref{eq:qbd1}. }
\end{center}
\end{figure}

 We have compared our approach (QT-based) with the approximation obtained by truncating $A_{-1}$, $A_0$ and $A_1$ to finite size, as described in Section~\ref{sec:tandem}.
 In Figure \ref{fig:eqt} we plot the pairs (CPU time, residual errors) for the two different approaches. It is interesting to observe that the method based on truncation cannot reach a sufficiently accurate approximation. For the first problem, the CPU time required by the method based on truncation for reaching the best accuracy {\tt 9.0e-12} is about 122 seconds, while the time taken by our approach to reach the same precision is 5.22 seconds for a speed-up of 23.4. Moreover, our method reaches the best accuracy {\tt 1.3e-13} in 8.58 seconds.
For the second problem the differences are even more evident. The method based on truncation takes 945.3 seconds to reach the accuracy {\tt 2.3e-9} while our method takes 5.48 seconds to approximate the solution with the same accuracy. The speed-up in this case is 172.5. Moreover, our method reaches the highest precision of {\tt 1.0e-13} in 27.86 seconds. Also in this problem, 
all the residuals are measured as $\norm{A_{-1} + A_0 G + A_1G^2 - G}_\infty$.

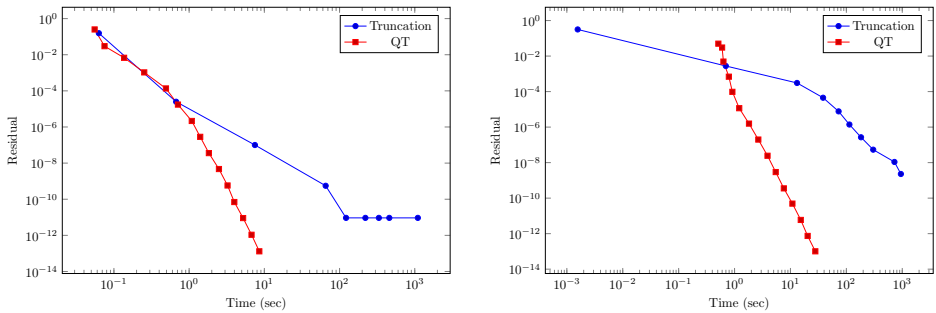
\begin{figure}\label{fig:eqt}
	\begin{center}
	\begin{tabular}{cc}
	\resizebox{6cm}{!}{
	\begin{tikzpicture}
	\begin{loglogaxis}[
		legend pos = north east, width = .8\linewidth, height = .45\textheight,
		xlabel = {Time (sec)}, 
		ylabel = {Residual}, title = {}]
	    \addplot table {example3_mam_2.dat}; 
	    \addplot table {example3_qt_2.dat}; 
	    \legend{Truncation, QT}
	\end{loglogaxis}
	\end{tikzpicture}}
	&
	\resizebox{6cm}{!}{
	\begin{tikzpicture}
	\begin{loglogaxis}[
		legend pos = north east, width = .8\linewidth, height = .45\textheight,
		xlabel = {Time (sec)}, 
		ylabel = {Residual}, title = {}]
	    \addplot table {example3_mam_3.dat}; 
	    \addplot table {example3_qt_3.dat}; 
	    \legend{Truncation, QT}
	\end{loglogaxis}
	\end{tikzpicture}}
	\end{tabular}
\end{center}\caption{A QBD example where $G\in\EQT$. 
Time versus accuracy of the QT-based method and of the truncation method relying on the {\tt SMCSolver} toolbox. 
On the left, the QBD defined by coefficients \eqref{eq:qbd1}; on the right, the case defined by \eqref{eq:qbd2}.}
\end{figure}

\section{Conclusions}\label{sec:conc}

We have introduced a computational framework for handling classes of
structured semi-infinite matrices encountered in the analysis of
random walks in the quarter plane which include rare events as reset
and catastrophes. This framework consists of two matrix classes $\QTD$
and $\EQT$ which extend the quasi Toeplitz matrices introduced in
\cite{bmm} and \cite{bmmr}. We proved that both classes are Banach
algebras, that matrices in these classes can be approximated to any
arbitrary precision in the infinite norm with a finite number of
parameters and that a finite arithmetic can be designed and
implemented by extending the \cqttoolbox{} of \cite{bmr}.  In
particular the computation of the invariant probability measure,
performed by means of the matrix analytic approach of
\cite{neuts:book} can be achieved by solving a quadratic matrix
equation with coefficients in the classes $\QTD$ or $\EQT$. We have
given conditions on the probabilities of the random walk under which
the minimal nonnegative solution $G$ of such quadratic matrix
equations belongs either to $\QTD$ or to $\EQT$. Examples of
algorithms for computing $G$ are given.  Numerical experiments, applied to significant problems, show
the effectiveness of our approach.

Some issues are still left to investigate. Namely, the analysis of the more general case where the coefficients $A_i=T(a_i)+E_i$ have a banded structure, that is $a_i(z)$ is a general Laurent polynomial; the study of the specific features of the solution $G$ when $a_{-1}(1)=a_1(1)$; and the challenging case of multidimensional random walks with more than two coordinates where the matrix coefficients $A_i$ have a multilevel structure.


\bibliographystyle{abbrv}

\end{document}